\newcommand{\floor}[1]{\lfloor {#1} \rfloor}
\newcommand{\old}[1]{}
\newcommand{\arxiv}[1]{{\tt \href{http://arxiv.org/abs/#1}{arXiv:#1}}}
\newtheorem{theorem}{Theorem}
\newtheorem{lemma}[theorem]{Lemma}
\newtheorem*{lemmaA}{Lemma A}
\theoremstyle{remark}
\newtheorem*{remark}{Remark}
\theoremstyle{definition}
\newtheorem*{definition}{Definition}
\def\Grid{\mathcal{G}}
\def\Early{\mathcal{E}}
\def\Late{\mathcal{L}}
\def\Re{\mathrm{Re}}
\def\one{\mathbf{1}}
\def\B{\mathbf{B}} %discrete ball
\def\aa{\mathbf{a}}
\title{Logarithmic Fluctuations for Internal DLA}
\author{David Jerison\footnote{Partially supported by NSF grant DMS-1069225} \and Lionel Levine\footnote{Supported by an NSF Postdoctoral Research Fellowship.} \and Scott Sheffield\footnote{Partially supported by NSF grant
DMS-0645585.}}
\date{July 6, 2011}
\DeclareSymbolFont{AMSb}{U}{msb}{m}{n}
\DeclareMathSymbol{\C}{\mathbin}{AMSb}{"43}
\DeclareMathSymbol{\EE}{\mathbin}{AMSb}{"45}
\DeclareMathSymbol{\N}{\mathbin}{AMSb}{"4E}
\DeclareMathSymbol{\PP}{\mathbin}{AMSb}{"50}
\DeclareMathSymbol{\Q}{\mathbin}{AMSb}{"51}
\DeclareMathSymbol{\R}{\mathbin}{AMSb}{"52}
\DeclareMathSymbol{\Z}{\mathbin}{AMSb}{"5A}
\begin{document}

\maketitle
\renewcommand{\thefootnote}{}
%\footnote{{\bf\noindent Key words:} Gaussian free field, Green's function, internal diffusion-limited aggregation}
\footnote{{\bf\noindent 2010 Mathematics Subject Classification:} 60G50, 60K35, 82C24.}
%{35R35, 31C20}
\renewcommand{\thefootnote}{\arabic{footnote}}

\begin{abstract}
Let each of $n$ particles starting at the origin in $\mathbb Z^2$ perform simple random walk until reaching a site with no other particles.  Lawler, Bramson, and Griffeath proved that the resulting random set $A(n)$ of $n$ occupied sites is (with high probability) close to a disk $\B_r$ of radius $r=\sqrt{n/\pi}$.
We show that the discrepancy between $A(n)$ and the disk is at most logarithmic in the radius: i.e., there is an absolute constant~$C$ such that with probability $1$,
	\[ \B_{r - C\log r} \subset A(\pi r^2) \subset \B_{r+ C\log r} \quad \mbox{ for all sufficiently large $r$}. \]
\end{abstract}

\pagebreak
\tableofcontents

\section{Introduction}

In the process known as internal diffusion limited aggregation (IDLA)
one constructs for each integer
time $n \geq 0$ an {\bf occupied set} $A(n) \subset \mathbb Z^2$ as follows:
begin with $A(1) = \{0\}$, and for each $n$
let $A(n+1)$ be $A(n)$ plus the first point at which a simple random walk from the origin
hits $\Z^2 \setminus A(n)$.

IDLA was first proposed by Meakin and Deutch \cite{MD} as a model of industrial chemical processes such as electropolishing, corrosion and etching.  After discussing these applications, they write ``it is also of some fundamental significance to know just how smooth a surface formed by diffusion limited processes may be.''  Their numerical findings indicated that such surfaces are astonishingly smooth: in two and higher dimensions, the fluctuations appeared to be at most logarithmic in the number of particles.
%(Moreover, if one measures the fluctuations in a fixed direction instead of in the worst direction, then in dimensions three and higher they are of constant order.)

In the more than two decades since, this finding has resisted mathematical proof.  Diaconis and Fulton \cite{DF} identified IDLA as a special case of a ``smash sum'' operation on subsets of $\Z^2$.  Lawler, Bramson and Griffeath~\cite{LBG} proved that the asymptotic shape of the domain $A(n)$ is a disk, and Lawler~\cite{Lawler95} showed that the fluctuations from circularity are at most of order $r^{1/3}$ up to logarithmic factors, where $r=\sqrt{n/\pi}$ is the radius of the disk of area~$n$.   Later work \cite{GQ,LP10} related internal DLA with multiple sources to variational problems in PDE.  However, the basic issue raised by Meakin and Deutch and again by Lawler (``A more interesting question... is whether the errors are $o(n^\alpha)$ for some $\alpha<1/3$.'' \cite{Lawler95})  stood until Asselah and Gaudilli\`{e}re~\cite{AG10a} (with an appendix by Blach\`{e}re) improved the bound of $r^{1/3}$ in dimensions $d\geq 3$ to $r^{1/(d+1)}$.

Our main result shows that in dimension $2$, the fluctuations from circularity are at most order $\log r$.  In fact we show somewhat more, namely that the probability that the fluctuations at time $n=\pi r^2$ exceed $a \log r$ decays like $r^{-\gamma}$ (where $\gamma$ can taken to be arbitrarily large, but the constant $a$ may depend on $\gamma$).  Let
	\[ \B_r = \left\{ z \in \Z^2 \mid z_1^2 + z_2^2 < r^2 \right\} \]
be the set of lattice points lying in the open disk of radius $r$ centered at~$0$.  For convenience, we define the IDLA cluster $A(t)$ for all $t \in \R$ by setting $A(t) = A(\floor{t})$, where $\floor{t}$ is the greatest integer $\leq t$ (and $A(t)=\emptyset$ if $t < 1$).

\begin{theorem}\label{thm:logfluctuations}
For each $\gamma$ there exists an $a = a(\gamma) < \infty$
such that for all sufficiently large $r$,
\[
\PP \Big( \left\{ \B_{r - a \log r} \subset A(\pi r^2) \subset \B_{r+ a\log r} \right\}^c \Big)
\le r^{-\gamma}. \]
\end{theorem}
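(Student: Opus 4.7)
The plan is to reduce each of the two containments to a pointwise bound of the form $\PP(z \in A(\pi r^2)) \le r^{-\gamma - 2}$ or $\PP(z \notin A(\pi r^2)) \le r^{-\gamma - 2}$ for each $z$ in the relevant annular region, and then take a union bound. Lawler's earlier $r^{1/3}\log r$ estimate lets us restrict attention a priori to the annulus $\big||z| - r\big| \le r^{1/3}\log r$, containing $O(r^{4/3}\log r)$ lattice points, so the claimed $r^{-\gamma}$ probability bound follows from such single-site estimates with room to spare.

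The core framework is that of Lawler--Bramson--Griffeath. For a fixed target $z$, I would couple the $n = \pi r^2$ IDLA walks with $n$ independent \emph{unstopped} walks in a large ambient ball $B \supset A(n)$, and compare the count $M(z)$ of IDLA walks that visit $z$ before being absorbed with the count $N(z)$ of unstopped walks that visit $z$ before leaving $B$. The random variable $N(z)$ is an i.i.d.\ sum of Bernoullis whose mean is computable from the discrete Green's function $G_B(0,z)$, so Bernstein-type concentration is automatic. Writing $L(z) = N(z) - M(z)$ for the count of walks absorbed before reaching $z$, the problem reduces to controlling $L(z)$ sharply---for the inner bound via $L(z) < N(z)$ so that $M(z) > 0$, and for the outer bound by a dual analysis that swaps the roles of ``absorbed at $z$'' and ``absorbed elsewhere.''

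The key refinement pushing the fluctuation scale from $r^{1/3}$ down to $\log r$ is to replace the binary hit/miss count by a \emph{weighted} quantity: to each walk attach the value $G_B(X_\tau, z)$ at the moment $\tau$ of absorption, and sum over all $n$ walks. The i.i.d.\ summands are bounded by $\|G_B(\cdot,z)\|_\infty = O(\log r)$, so Azuma gives fluctuations of size $O(\log r \cdot \sqrt{n})$; meanwhile the mean of this sum equals the expected aggregate number of visits to $z$, so comparison with $N(z)$ yields control on $L(z)$ at precisely the logarithmic scale once one normalizes by the harmonic factor linking Green's function values to hitting probabilities near $\partial\B_r$. I expect the main obstacle to be the self-referential character of IDLA, since the stopping rule for each walk depends on the cluster those walks build up; my plan to handle it is to invoke the abelian property of IDLA together with a two-scale bootstrap, first using Lawler's bound to localize $\partial A(\pi r^2)$ to within $o(r)$ of $\partial\B_r$, then running the refined martingale analysis conditional on this macroscopic regularity. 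A secondary difficulty is that the union bound leaves only polynomial slack in $r$, so the constants in the weighted concentration inequality must be tracked carefully to ensure $a(\gamma)$ scales at most linearly in $\gamma$.
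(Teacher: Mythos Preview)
Your proposal has the right architecture (a harmonic-function martingale plus a bootstrap), but the concentration step as written does not reach the $\log r$ scale, and the bootstrap is underspecified in a way that matters.

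The concrete gap is the Azuma estimate. With $n=\pi r^2$ increments each bounded by $\|G_B(\cdot,z)\|_\infty\asymp\log r$, Azuma gives
\[
\PP\bigl(|S-\EE S|>t\bigr)\le 2\exp\bigl(-t^2/(2n\log^2 r)\bigr),
\]
so to obtain probability $r^{-\gamma}$ you need $t\gtrsim r(\log r)^{3/2}\sqrt{\gamma}$. The signal distinguishing an $\ell$-late or $\ell$-early point from a typical one is at best of order $r\ell$, so this yields $\ell\gtrsim(\log r)^{3/2}$, not $\log r$. The $L^\infty$ bound on increments discards exactly the information that matters: almost all of the $n$ particles land far from $z$, where the test function is tiny, so the true quadratic variation is far smaller than $n\log^2 r$. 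But you cannot exploit this without already knowing that particles land roughly where they should---which is the self-referential problem you flagged but did not resolve. Invoking the abelian property does not help here; abelianness lets you reorder particles, not control where they stop.

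What the paper does differently: it replaces the Green's function by a \emph{bounded} Poisson-kernel-type function $H_\zeta(w)\approx\Re\bigl(\tfrac{\zeta/|\zeta|}{\zeta-w}\bigr)$ (so $H_\zeta(\zeta)\le 2$), represents the martingale $M_\zeta(t)=\sum_{w\in A_\zeta(t)}(H_\zeta(w)-H_\zeta(0))$ as a time-changed Brownian motion, and bounds the quadratic variation directly to be $O(\log r)$ \emph{on the event that no prior point is $(m+1)$-early}. This is where the bootstrap enters, and it is not two-scale: the paper proves a pair of linked lemmas---an $m$-early point forces (w.h.p.) a $cm$-late point at an earlier time, and an $\ell$-late point forces (w.h.p.) an $\ell^2/(C\log T)$-early point---and iterates them $O(\log\log r)$ times, starting from the crude a priori bound $\ell_0\asymp\sqrt{T}$ and driving the scale down geometrically to $C\log T$. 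A single pass from Lawler's $r^{1/3}$ estimate would not close the loop, because the quadratic-variation bound at each pass depends on the regularity established by the previous pass.

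So the missing ingredients are: (i) a sharp quadratic-variation bound (of order $\log r$, conditional on prior regularity) in place of the $L^\infty$-Azuma step, and (ii) an iterated early$\leftrightarrow$late mechanism rather than a single bootstrap. Without these, the sketch stalls at a power of $\log r$ strictly above $1$.
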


The proof will show that $a(\gamma)$ can be taken to be an affine function of~$\gamma$.  Taking $C=a(\gamma)$ for large enough $\gamma$, Theorem~\ref{thm:logfluctuations} in particular implies (by Borel-Cantelli) that the fluctuations from circularity are $O(\log r)$ almost surely:
	\begin{equation} \label{e.logfluctuation} \PP \left\{ \B_{r - C\log r} \subset A(\pi r^2) \subset \B_{r+ C\log r} \; \mbox{ for all sufficiently large~$r$} \right\} = 1. \end{equation}
	
This is the first part of a three part article.  In the second part, entitled \emph{Internal DLA in higher dimensions} \cite{JLS10}, we show that internal DLA in $\Z^d$ for $d \geq 3$ has even smaller fluctuations than in dimension~$2$: the higher-dimensional analogue of \eqref{e.logfluctuation} holds with $\log r$ replaced by $\sqrt{\log r}$.  We believe that these orders ($\log r$ in dimension $2$ and $\sqrt{\log r}$ in dimensions $d\geq 3$) are the best possible.  We discuss a possible approach to proving matching lower bounds, as well as several other open questions, in Section~\ref{s.conclusion}. Using methods rather different from ours, Asselah and Gaudilli\`{e}re~\cite{AG10b} have recently announced an independently obtained upper bound of order $\log^2 r$ in all dimensions $d\geq 2$.

In the third part, entitled \emph{Internal DLA and the Gaussian free field} \cite{JLS11}, we show that the fluctuations from circularity have a weak limit which is a variant of the Gaussian free field.  While both this paper and the sequel address fluctuations of IDLA, the results are of a rather different nature: the present paper is concerned with fluctuations at the fine scale of individual lattice points.  The sequel addresses average fluctuations, and will show that in a certain sense the fluctuation from circularity {\em averaged} over a constant fraction of the boundary (here inner and outer fluctuations cancel each other out) is only {\em constant} order.  In fact, the fluctuations from circularity converge in law to a particular Gaussian random distribution on the circle, whose time evolution is related to the Gaussian free field.

\begin{figure}[htbp]
\begin{center}
\includegraphics[height=.5\textheight]{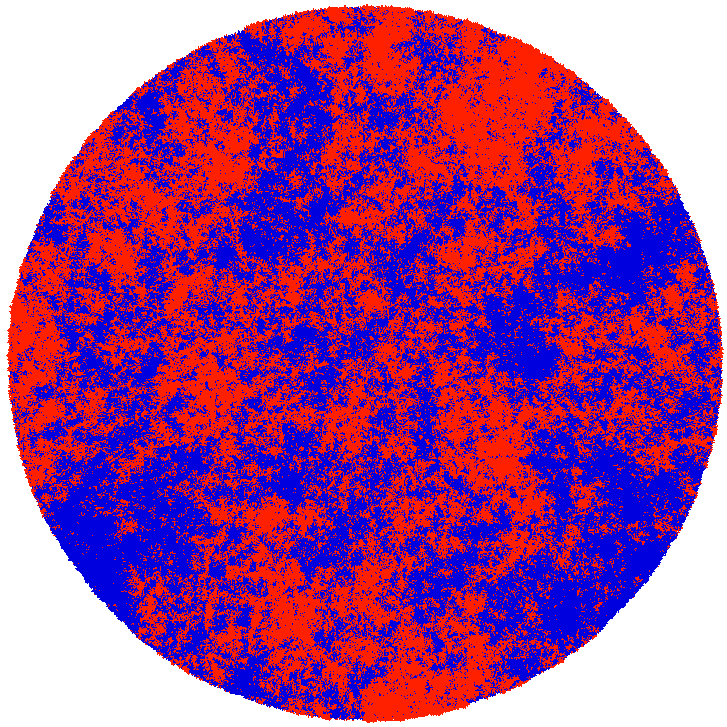} \\ \vspace{3mm}
\includegraphics[width=\textwidth]{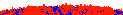}
\caption{\label{fig:cluster} Top: IDLA cluster $A(n)$ for $n=4\times 10^5$.
Early points (see Section~\ref{sec:basecase}) are colored red, and late points are colored blue.  Bottom: closeup of a portion of the boundary of the cluster.}
\end{center}
\end{figure}

\section{Logarithmic fluctuations} \label{s.logsection}

We begin in Section~\ref{sec:nothintentacles} with a lemma that rules out ``thin tentacles'' in the IDLA cluster (Lemma~\ref{lem:thickening}).  More precisely, it is unlikely that $z \in A(n)$ and less than a constant fraction $b m^2$ of lattice sites in the disk $\B(z,m)$ belong to $A(n)$.  Section~\ref{sec:basecase} establishes a rather rough a priori bound on the probability of certain types of very large fluctuations.  Section~\ref{sec:exittimes} gives some large deviation results for the exit time of Brownian motion from an interval.  These three sections comprise the background results needed for our argument.

In order to control more precisely the deviation from circularity near a point $\zeta \in \Z^2$, we define in Section~\ref{sec:hzetaproperties} a discrete harmonic function $H_\zeta: \Z^2 \to \R$ which approximates the harmonic function \begin{equation} \label{e.fzeta} F_\zeta(z) := \Re \left( \frac{\zeta / |\zeta|}{\zeta-z} \right), \end{equation} where $z$ and $\zeta$ are viewed as complex variables.  (Discrete harmonicity will fail for $H_\zeta$ only at a few points near $\zeta$.)  The function $H_\zeta(z) - \frac{1}{2|\zeta|}$ approximates the discrete Poisson kernel for the disk $\B_{|\zeta|}$, that is, the probability that simple random walk started at $z$ first exits $\B_{|\zeta|}$ at $\zeta$.
 We may extend the function $H_\zeta$ linearly to the grid~$\Grid$ of horizontal and vertical line segments joining vertices of $\Z^2$.  If we begin with a collection of particles on the grid~$\Grid$, and we allow some of them to move according to Brownian motions on~$\Grid$ (see Section~\ref{sec:martingale}), according to determined starting and stopping rules, then the sum of~$H_\zeta$ over these particle locations is a continuous martingale (provided that no particle continues to move after reaching a point where discrete harmonicity fails).

\begin {figure}[htbp]
\begin {center}
\includegraphics [width=2in]{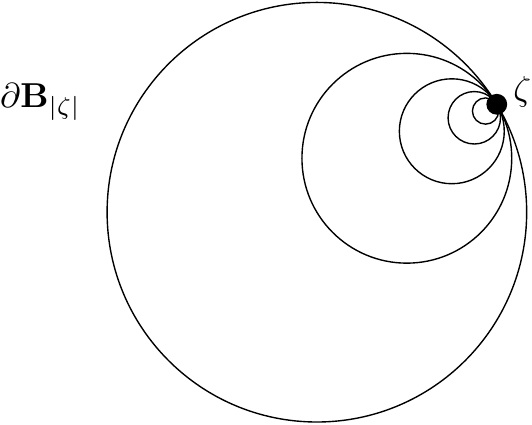}
\caption {\label{Hlevels} Level sets of $F_\zeta$.  Note that $F_\zeta=\frac{1}{2|\zeta|}$ on $\partial \B_{|\zeta|}$ and $F_\zeta(0) = \frac{1}{|\zeta|}$.}
\end {center}
\end {figure}

By the martingale representation theorem (see~\cite[Theorem~V.1.6]{RY}), we can represent any continuous martingale as a time change of a standard Brownian motion.
Using the representation theorem and large deviations for Brownian exit times, we can control the likelihood that this martingale is larger or smaller than its mean by estimating its quadratic variation.

We will ultimately show that large deviations of $A(n)$ from circularity are unlikely without large deviations of the martingale for some $\zeta$.  For example, the fact that thin tentacles are unlikely (Lemma~\ref{lem:thickening}) means that with high probability, a point near some~$\zeta$ cannot belong to $A(n)$ unless many points near~$\zeta$ belong to $A(n)$.  Since $H_\zeta(z)$ is large for~$z$ close to~$\zeta$, this can be used to show that (when $\zeta$ is outside the typical range of $A(n)$) the martingale is likely to be large if a point near $\zeta$ belongs to $A(n)$.  Similarly, we can show that if $\zeta$ fails to be hit (long after we expect it to have been hit) then the martingale has to be small at that time.  (For this step we actually use a modified version of the process in which particles are frozen when they first exit a set called $\Omega_{\zeta}$ that approximates $\B_{|\zeta|}$, see Figure \ref{fig:hzeta}.)

The martingale is a Brownian motion parameterized by its quadratic variation time, but the amount of quadratic variation time elapsed while $A(n)$ is constructed is random.  Note that the conditional expected amount of time elapsed when depositing one particle (given the locations of all previous particles) is the conditional variance of the value of $H_\zeta$ at the position that the particle is deposited.  This value is easy to approximate on the event that $A(n)$ is approximately circular.  If, as a heuristic, we {\em assume} that $A(n)$ is approximately circular for all $n$, then a back of the envelope calculation shows that the total time lapse in constructing $A(\pi |\zeta|^2)$ should be of order $\log |\zeta|$ in dimension two (and of constant order in all higher dimensions).  On the event that the total time lapse is about right, the probability of having a martingale fluctuation of $C \sqrt{\log |\zeta|}$ standard deviations should decay like $e^{-\frac{C^2}{2} \log |\zeta|} = |\zeta|^{-C^2/2}$.  If $C$ is large enough then we should be able to sum over all $\zeta$ and show that we are unlikely to see fluctuations of this magnitude anywhere.

This heuristic actually suggests the basic strategy of our argument: we assume some amount of regularity for the growth of $A(n)$ and use that assumption to show that the quadratic variation time lapse is unlikely to be large, hence large martingale fluctuations are unlikely, and hence the growth of $A(n)$ is {\em even more regular} than we assumed.  This strategy is pursued in Sections~\ref{sec:earlyimplieslate} and~\ref{sec:lateimpliesearly}, which contain the heart of proof.  Using the ideas above, we will show that the presence of an ``early point'' (a point $z$ visited at time less than $\pi |z|^2$) implies, with high probability, the presence of a comparably late point at a previous time (Lemma~\ref{lem:earlyimplieslate}); similarly, the presence of a late point implies, with high probability, the presence of a {\em significantly earlier} point at a previous time (Lemma~\ref{lem:lateimpliesearly}).  By iterating these two lemmas, we will see that if there were a substantial probability to have even a slightly early or late point (off by more than $C \log r$), then there would be a substantial probability of having a much larger fluctuation, which would contradict our a priori estimate.

In general, we use $C_0, C_1, \ldots$ to denote large constants and $c_0, c_1, \ldots$ to denote small constants.  Unless explicitly specified otherwise, these are all positive absolute constants.

\subsection{No thin tentacles}
\label{sec:nothintentacles}

Let $A(n)$ be the internal DLA cluster formed from $n$ particles started at the origin in $\Z^2$.  For $z \in \Z^2$ and a positive real number~$m$, let
	\[ \B(z,m) = \{y \in \Z^2 \,:\, |y-z| < m \} \]
where $|z| = (z_1^2+z_2^2)^{1/2}$ is the Euclidean norm.

The following lemma is a variant of \cite[Lemma 5.12]{LP10}.  The basic estimates involved are similar to those used by Lawler, Bramson and Griffeath in their proof of the outer bound; see in particular \cite[Lemma~5b]{LBG}.

\begin{lemma}
\label{lem:thickening}
There are positive absolute constants $b$, $C_0$, and $c_0$ such that
for all real numbers $m >0$ and all $z \in \Z^2$ with $|z| \ge m$,
\begin{equation}
\label{e.thintentaclebound}
\PP \left\{ z \in A(n),~ \# (A(n) \cap \B(z,m)) \leq b m^2 \right\} \leq
C_0 e^{-c_0 m}.
\end{equation}
\end{lemma}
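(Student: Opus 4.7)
The lemma is a quantitative ``no thin tentacles'' bound, which I would prove by reducing to a simple-random-walk escape estimate. The key observation is that on the bad event $G := \{z\in A(n),\ \#(A(n)\cap\B(z,m))\leq bm^2\}$, the cluster is sparse in $\B(z,m)$ at every intermediate step, since $A(k)\subseteq A(n)$ for $k\leq n$. In particular, at the step $k$ when $z$ is first added, $|A(k-1)\cap\B(z,m)|\leq bm^2$, so the walk $S^k$ that settles at $z$ must traverse this sparse set to reach $z$ without first exiting the cluster elsewhere.

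The central technical input is the following escape estimate, which I would establish first: there exist absolute constants $b, c, C > 0$ such that for any finite $U\subseteq\Z^2$ with $|U\cap\B(z,m)|\leq bm^2$, any starting point $y\in U$ with $|y-z|\geq m$, and any target $z$, the probability that simple random walk from $y$ first exits $U$ (i.e., first visits $\Z^2\setminus U$) at the point $z$ is at most $Ce^{-cm}$. The intuition: a sparse subset of a two-dimensional disk has large edge-boundary relative to its volume (by the lattice isoperimetric inequality), so a random walk confined to $U\cap\B(z,m)$ escapes to $U^c$ at a bounded-below rate, while reaching the specific target $z$ from outside $\B(z,m)$ requires $\Omega(m)$ successive non-escape events. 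A rigorous proof likely proceeds via the discrete mean-value property and a concentric-annulus iteration, in the spirit of Lemma~5b of~\cite{LBG}.

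To apply the escape estimate, decompose by the step at which $z$ is added:
\[ \PP(\{z\in A(n)\}\cap G) = \sum_{k=1}^n \PP(\{S^k\text{ first exits }A(k-1)\text{ at }z\}\cap G). \]
On $G$, $|A(k-1)\cap\B(z,m)|\leq bm^2$, so by the escape estimate with $U = A(k-1)$ and $y = 0$ (using $|z|\geq m$), each summand is at most $Ce^{-cm}$ times the probability of the underlying cluster event. Summing over $k$ introduces a factor of $n$; this is absorbed into the constants by noting that for $m$ smaller than a fixed multiple of $\log n$ the claimed bound exceeds $1$ and is trivial, while for larger $m$ the exponential decay dominates the polynomial factor in $n$.

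The main obstacle is proving the escape estimate uniformly over all sparse configurations $U\cap\B(z,m)$, depending only on the cardinality bound and not on geometric structure (so that, e.g., a long thin tentacle and a compact island are both handled). Beyond this, the bookkeeping to absorb the factor of $n$ from summing over~$k$ requires care but no deep idea, since we only need the bound in the regime $m = \Omega(\log r)$ when it is applied later in the paper.
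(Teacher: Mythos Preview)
Your final step contains a genuine error: the bound $C_0 e^{-c_0 m}$ does \emph{not} exceed $1$ when $m$ is of order $\log n$ --- for $m = c'\log n$ it is roughly $n^{-c_0 c'}$, which is small. Since the lemma demands a bound uniform in $n$ while your argument yields only $Cn\,e^{-cm}$, you have not proved the statement. (You are right that the later application uses only $m \geq C_3 \log T$ with $n \leq T$, and in that regime $n\, e^{-cm} \leq e^{-c'm}$; but that is a weaker claim than Lemma~\ref{lem:thickening}.)

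The root cause is structural. By isolating the single particle that lands at $z$ and summing over the time $k$ at which it settles, you are forced into the factor of $n$. The paper's proof (the appendix, proving the stronger Lemma~A) proceeds quite differently. It partitions the box $\tilde\B(z,m)$ into $m$ concentric shells $S_j$, fixes the occupancy profile $a_j = \#(A \cap S_j)$ (only polynomially many profiles), and then tracks \emph{every} particle entering the box. Each particle that reaches an inner shell $S_{\alpha_i}$ must first cross every outer shell-group, and each crossing is a ``trial'' which---by a one-scale escape estimate (Lemma~\ref{lem:trialsuccessrate}: a walk in a sparse annulus exits the cluster with probability at least $c_1$, conditionally on the past)---fails with probability at least $c_1$. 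On the bad event there are at most $\sum a_j \leq bm^d$ failures among at least $T=\sum a_j \gamma_j$ trials; a multiscale ``tower of cubes'' construction (adapting the shell-group widths $\beta_i$ to the profile~$\aa$) arranges that $T \geq c\, m^2/\log m$ with failures $< c_1 T/2$, so Cram\'er's theorem gives probability $\leq e^{-c'T} \leq e^{-c'' m^2/\log m}$, in fact stronger than the $e^{-c_0 m}$ claimed. The aggregation over all particles is precisely what converts your sum over $k$ into a large-deviations count with no factor of $n$. Your deferred ``escape estimate'' is morally the product over shells of the paper's one-scale trial bound, so you have also postponed the place where the multiscale structure (the $\beta_i$'s) actually has to be built.
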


In other words, it is unlikely that $z$ belongs to the cluster $A(n)$ but less than a constant fraction of sites in the ball $\B(z,m)$ belong to the cluster.  Figure~\ref{fig:tentacle} depicts this unlikely scenario.

\begin {figure}[htbp]
\begin {center}
\includegraphics [width=2in]{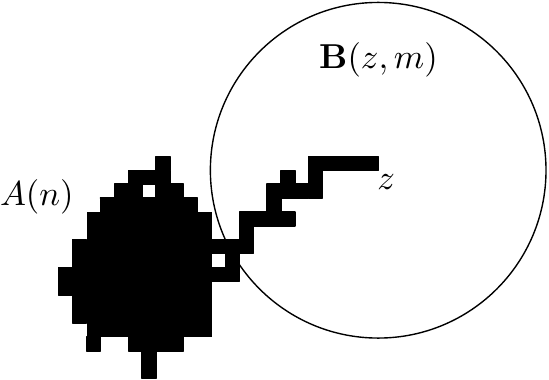}
\caption {\label{fig:tentacle} A thin tentacle.}
\end {center}
\end {figure}

In fact, the exponent on the right side of \eqref{e.thintentaclebound} can be improved to order $m^2/ \log m$ (and to order $m^2$ in dimensions $d \geq 3$).  In appendix~\ref{sec:tentacle}, we prove these stronger bounds in all dimensions.  We anticipate using these bounds for $d\geq 3$ in our sequel paper \emph{Internal DLA in higher dimensions}.  For purposes of the present paper, we will use only the bound \eqref{e.thintentaclebound} in dimension $d=2$.

\subsection{A priori bound on probability of large fluctuations}
\label{sec:basecase}

We expect a point $z \in \Z^2$ to first join the IDLA cluster $A(n)$ at time about $n=\pi |z|^2$.  This motivates the following two definitions, which are illustrated in Figure~\ref{fig:earlylate}.

\begin{definition}\label{def:early}
For a positive real number $m$, we say that $z\in \Z^2$ is \textbf{$m$-early} if
$z \in A(\pi(|z|-m)^2)$.  For a time $N$, let
	\[ \Early_m[N] := \bigcup_{z \in A(N)} \left\{z \in A(\pi(|z|-m)^2) \right\} \]
be the event that some point in $A(N)$ is $m$-early.
\end{definition}

\begin{definition}\label{def:late}
For a positive real number $\ell$, we say that $z\in \Z^2$ is \textbf{$\ell$-late} if $z \notin A(\pi (|z|+\ell)^2)$.  For a time $N$, let
	\[ \Late_\ell[N] = \bigcup_{z \in \B_{\sqrt{N/\pi}-\ell}} \left\{ z \notin A(\pi (|z|+\ell)^2) \right\} \]
be the event that some point in $\B_{\sqrt{N/\pi}-\ell}$ is $\ell$-late.
\end{definition}

\begin {figure}[htbp]
\begin {center}
\includegraphics [width=5in]{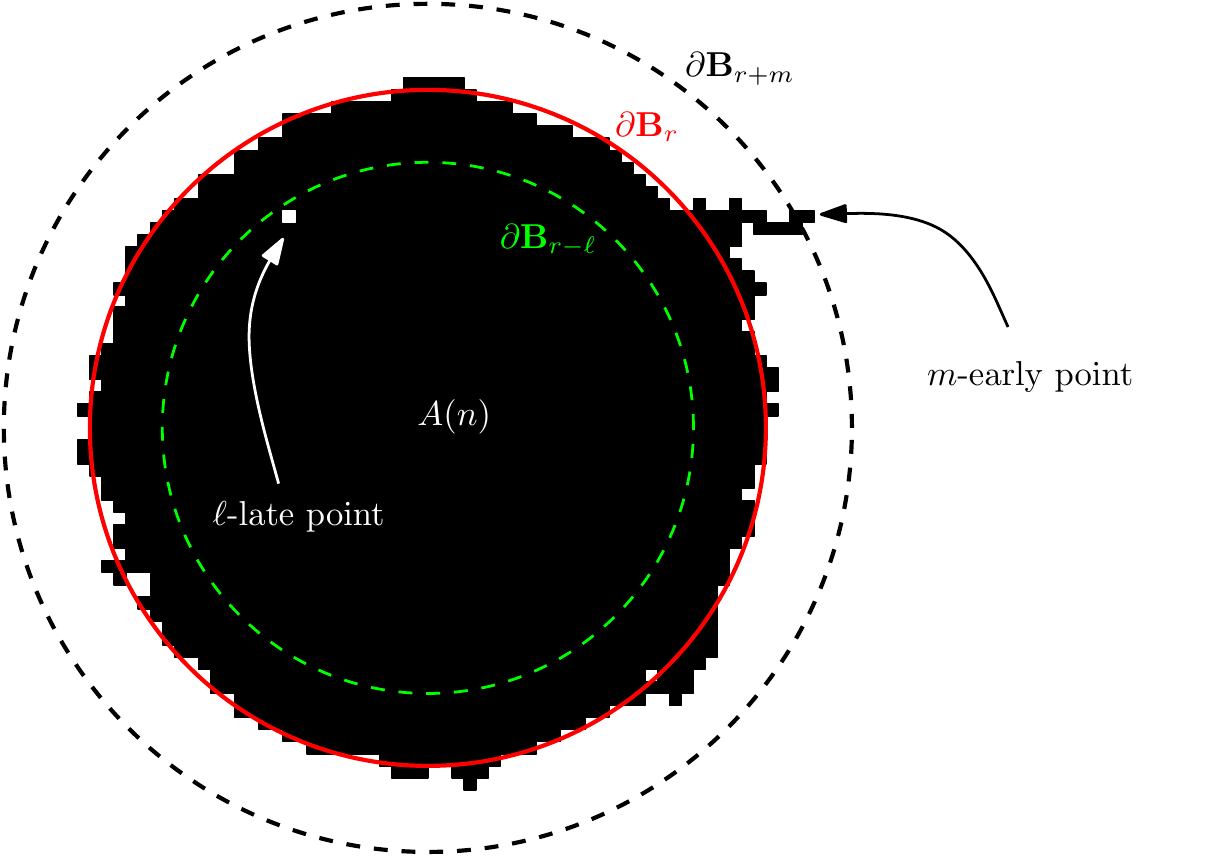}
\caption {\label{fig:earlylate} Illustration of an IDLA cluster $A(n)$ (union of the black squares), an $m$-early point and an $\ell$-late point.  The early point belongs to $A(n)$ despite lying outside the disk $\B_{r+m}$, where $r = \sqrt{n/\pi}$; the late point does not belong to $A(n)$ despite lying inside the disk $\B_{r-\ell}$.}
\end {center}
\end {figure}

The next lemma gives the a priori estimate on the probability of very late points that we will use to get the proof started in Sections~\ref{sec:earlyimplieslate} and~\ref{sec:lateimpliesearly}.  It follows easily from
Lemma~6 of~\cite{LBG}; for completeness, we include a proof in appendix~\ref{sec:aprioriproof}.
	
\begin{lemma}
\label{lem:aprioriestimates}
There are absolute constants $C_0, c_0>0$ such that for all real~$\ell>0$,
\begin{equation}\label{nolate}
\PP (\Late_\ell[100\pi \ell^2]) \le C_0 e^{-c_0\ell}.
\end{equation}
%\begin{equation}\label{noearly}
%\PP (\Early_m[100\pi m^2]) \le C_0 e^{-c_0m}.
%\end{equation}
%
% this part no longer needed.
\end{lemma}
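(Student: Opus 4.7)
The plan is to reduce the lemma to a union bound over the at most $O(\ell^2)$ lattice points where an $\ell$-late event can occur, with the per-point estimate supplied by the inner-bound estimate of Lawler--Bramson--Griffeath. First, set $N = 100\pi\ell^2$, so that $\sqrt{N/\pi} = 10\ell$. Unwinding the definition of $\Late_\ell[N]$ gives
\[ \Late_\ell[N] = \bigcup_{z \in \B_{9\ell}} \bigl\{ z \notin A(\pi(|z|+\ell)^2) \bigr\}. \]

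Second, for each fixed $z \in \B_{9\ell}$ set $n_z := \pi(|z|+\ell)^2$ and note that $\sqrt{n_z/\pi} - |z| = \ell$; that is, $z$ sits at radial distance exactly $\ell$ inside the typical boundary of $A(n_z)$. Lemma~6 of \cite{LBG} is precisely the uniform inner-bound estimate needed: it supplies absolute constants $C_1, c_1 > 0$ with $\PP(z \notin A(n_z)) \le C_1 e^{-c_1 \ell}$, and the rate depends only on the radial gap $\ell$, not on $|z|$ or $n_z$ separately.

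Third, a union bound over the $O(\ell^2)$ lattice points in $\B_{9\ell}$ gives
\[ \PP(\Late_\ell[N]) \le \sum_{z \in \B_{9\ell}} C_1 e^{-c_1 \ell} \le C_1 \, \pi (9\ell)^2 \, e^{-c_1 \ell}, \]
and absorbing the polynomial $\ell^2$ factor into the exponential at the cost of a slightly smaller rate (pick any $c_0 < c_1$ and a correspondingly larger $C_0$) yields the claimed bound $C_0 e^{-c_0\ell}$. For very small $\ell$ the bound is trivial after enlarging $C_0$.

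The only step that requires any care is the second one: extracting from \cite[Lemma~6]{LBG} a tail bound of the form $\PP(z \notin A(n)) \le C e^{-c(\sqrt{n/\pi} - |z|)}$ with constants uniform in~$z$ and~$n$. If the literal statement in \cite{LBG} is phrased for a specific parametrization, one rereads its proof---which proceeds via standard hitting/martingale estimates for the random walks defining IDLA---and reads off the uniform exponent. This is why the deduction is advertised as easy: the nontrivial estimate is imported wholesale from prior work, and the only new content is the union bound over a polynomial-sized set.
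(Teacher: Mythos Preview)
Your proposal is correct and follows essentially the same route as the paper's proof in Appendix~\ref{sec:aprioriproof}: unwind $\Late_\ell[100\pi\ell^2]$ as a union over $z\in\B_{9\ell}$, apply the LBG inner estimate to each term, and absorb the polynomial factor from the union bound into the exponential. The only cosmetic difference is in how the LBG input is packaged: the paper quotes Lemma~6 of \cite{LBG} in its native $\epsilon$-parametrized form $\PP(z\notin A((1+\epsilon)\pi n^2))<2e^{-c_2(\epsilon)n}$, fixes $\epsilon=1/10$ (which is legitimate precisely because $|z|<9\ell$ forces the fractional gap $\ell/(|z|+\ell)\ge 1/10$), and then uses $r=|z|+\ell\ge\ell$ to convert $e^{-cr}$ to $e^{-c\ell}$; you instead assert the bound directly in terms of the radial gap~$\ell$, with a caveat that one may need to reread the LBG proof for the exact parametrization. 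That caveat is the right instinct---the literal LBG statement does not give a rate depending only on $\ell$---but the fix is exactly the one-line observation the paper makes, so your argument is complete once that is made explicit.
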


To avoid referring to too many unimportant constants, we take $C_0$ large enough and $c_0$ small enough so that the same $C_0$ and $c_0$ work in Lemmas~\ref{lem:thickening} and~\ref{lem:aprioriestimates}.

To prove Theorem~\ref{thm:logfluctuations} we will show that for each $\gamma$ there is a constant
$a=a(\gamma)$ such that if $r$ is sufficiently large and $m= \ell = a \log r$ and $n=\pi r^2$, then
\begin{equation}\label{maingoal}
\PP(\Early_m(n) \cup \Late_\ell(n)) \leq r^{-\gamma}.
\end{equation}
The next lemma is an easy observation which shows that \eqref{maingoal} implies Theorem~\ref{thm:logfluctuations}.

\begin{lemma}
For all $\ell,m,N>0$ we have
\label{lem:earlylatealternatedefinition}
	\[ \Early_m[N]^c = \bigcap_{n \leq N} \left \{ A(n) \subset \B_{\sqrt{n/\pi}+m} \right \} \]
	\[ \Late_\ell[N]^c = \bigcap_{n \leq N} \left\{ \B_{\sqrt{n/\pi}-\ell} \subset A(n) \right\}. \]
\end{lemma}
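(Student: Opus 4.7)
The plan is to verify both set equalities by unpacking the definitions and invoking the monotonicity $A(n) \subseteq A(n')$ for $n \le n'$. The key observation is purely algebraic: the inequality $|z| < \sqrt{n/\pi}+m$ is equivalent to $\pi(|z|-m)^2 < n$ (for $|z|\ge m$; if $|z|<m$ the former is automatic), and $|z| < \sqrt{n/\pi}-\ell$ is equivalent to $\pi(|z|+\ell)^2 < n$. So both identities are really just the statement that the cluster grows in a manner compatible with these algebraic comparisons.

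For the early identity, I argue both inclusions. Assume $A(n) \subseteq \B_{\sqrt{n/\pi}+m}$ for every $n \le N$. Given $z \in A(N)$, let $\tau_z \le N$ denote the first time $z$ joins the cluster; applying the hypothesis at $n = \tau_z$ gives $|z| < \sqrt{\tau_z/\pi}+m$, hence $\pi(|z|-m)^2 < \tau_z$, so $z \notin A(\pi(|z|-m)^2)$ and $z$ is not $m$-early. Conversely, if no $z \in A(N)$ is $m$-early, fix any $n \le N$ and $z \in A(n) \subseteq A(N)$; combining $\tau_z > \pi(|z|-m)^2$ with $\tau_z \le n$ yields $|z| < \sqrt{n/\pi}+m$, so $z \in \B_{\sqrt{n/\pi}+m}$.

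For the late identity, the template is identical with the roles of inner and outer deviations swapped. Under the hypothesis $\B_{\sqrt{n/\pi}-\ell} \subseteq A(n)$ for all $n \le N$, given any $z \in \B_{\sqrt{N/\pi}-\ell}$ I choose the smallest integer $n$ with $n > \pi(|z|+\ell)^2$; the bound $\pi(|z|+\ell)^2 < N$ forces $n \le N$, and by construction $z \in \B_{\sqrt{n/\pi}-\ell} \subseteq A(n)$, which together with the convention $A(t) = A(\lfloor t \rfloor)$ gives $z \in A(\pi(|z|+\ell)^2)$. Conversely, if every $z \in \B_{\sqrt{N/\pi}-\ell}$ lies in $A(\pi(|z|+\ell)^2)$, then for any $n \le N$ and $z \in \B_{\sqrt{n/\pi}-\ell}$ the inequality $\pi(|z|+\ell)^2 < n$ together with monotonicity gives $z \in A(\pi(|z|+\ell)^2) \subseteq A(n)$.

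There is no real obstacle: the lemma is pure bookkeeping, and the only care needed is handling the floor convention $A(t)=A(\lfloor t\rfloor)$ cleanly. Because $\tau_z$ is an integer, strict comparisons with the real-valued thresholds $\pi(|z|\mp m)^2$ and $\pi(|z|\pm\ell)^2$ convert faithfully into set-theoretic statements about the cluster at those times, which is what makes the rearrangements above rigorous.
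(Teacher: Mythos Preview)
Your approach is essentially the same as the paper's: both arguments unpack the definitions and exploit monotonicity of $A(\cdot)$ together with the equivalence $|z|<\sqrt{n/\pi}+m \Leftrightarrow n>\pi(|z|-m)^2$ (and its late analogue). The paper writes this as a chain of set equalities; you phrase it via the first hitting time $\tau_z$, but the content is identical.

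One small slip: in the late direction, from the containment hypothesis you take the smallest \emph{integer} $n>\pi(|z|+\ell)^2$ and then invoke $A(t)=A(\lfloor t\rfloor)$ to conclude $z\in A(\pi(|z|+\ell)^2)$. That step does not follow, since $\lfloor \pi(|z|+\ell)^2\rfloor = n-1$ and $z\in A(n)$ does not give $z\in A(n-1)$. The fix is to use a \emph{real} $n$ just above $\pi(|z|+\ell)^2$ with the same floor (the intersection on the right side is over real $n\le N$), so that $A(n)=A(\pi(|z|+\ell)^2)$. With that correction your argument is complete.
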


\begin{proof}
Taking complements in the definition, we obtain
	\begin{align*}
	\Early_m[N]^c &= \bigcap_{z \in A(N)} \left\{z \not\in A(\pi(|z|-m)^2) \right\}  \\
		&= \bigcap_{z \in A(N)} \; \bigcap_{n \leq \min(N,\pi(|z|-m)^2)} \{ z \not\in A(n) \} \\
		&= \bigcap_{n \leq N} \; \bigcap_{z \,:\, |z| \geq \sqrt{n/\pi}+m} \{z \not\in A(n) \} \\
		&= \bigcap_{n \leq N} \left\{ A(n) \subset \B_{\sqrt{n/\pi}+m} \right\}.
	\end{align*}
Likewise,
	\begin{align*}
	\Late_\ell[N]^c &= \bigcap_{z \in \B_{\sqrt{N/\pi}-\ell}} \left\{z \in A(\pi(|z|+\ell)^2) \right\}  \\
		&= \bigcap_{z \in  \B_{\sqrt{N/\pi}-\ell}} \; \bigcap_{\pi (|z|+\ell)^2 \leq n \leq N} \{ z \in A(n) \} \\
		&= \bigcap_{n \leq N} \; \bigcap_{z \,:\, |z| \leq \sqrt{n/\pi}-\ell} \{z \in A(n) \} \\
		&= \bigcap_{n \leq N} \left\{ \B_{\sqrt{n/\pi}-\ell}  \subset A(n) \right\}. \qed
	\end{align*}
\renewcommand{\qedsymbol}{}
\end{proof}

\subsection{Brownian exit times}
\label{sec:exittimes}

For $x\in \R$, write $\EE_x$ for the expectation on a probability space supporting a one-dimensional standard Brownian motion $\{B(s)\}_{s \geq 0}$ started at $B(0)=x$.  For $a,b >0$, let
	\[ \tau(-a,b) = \inf \{s>0 \mid B(s)\notin [-a,b] \} \]
be the first exit time of Brownian motion from the interval $[-a,b]$.

\begin{lemma} \label{lem:exittime} Let $0 < a \le b$ and $\lambda >0$.  If $\sqrt{\lambda}(a+b) \leq 3$, then
\[
\EE_0 e^{\lambda \tau(-a,b)} \le 1 + 10\lambda ab.
\]
%and
%\[
%\EE \tau = ab
%\]
\end{lemma}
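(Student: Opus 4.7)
\noindent\emph{Proof plan.}
The strategy is to compute $\EE_0 e^{\lambda \tau(-a,b)}$ in closed form via It\^o's formula and optional stopping, and then bound the result using elementary trigonometric identities. Choose $\alpha = \alpha(\lambda) > 0$ and set $f(x) := \cos\bigl(\alpha (x - (b-a)/2)\bigr)$ so that $\tfrac{1}{2}f'' + \lambda f \equiv 0$; then $e^{\lambda s} f(B(s))$ is a continuous local martingale by It\^o. Optional stopping applies at $\tau$ because $\EE_0 \tau = ab < \infty$ (a standard computation using the martingale $B(s)^2 - s$) and $f$ is bounded on $[-a,b]$. The horizontal shift by $(b-a)/2$ makes $f$ symmetric at the two endpoints, $f(-a) = f(b) = \cos(\alpha(a+b)/2)$, so the identity $\EE_0[e^{\lambda \tau} f(B(\tau))] = f(0)$ rearranges to
\[
\EE_0 e^{\lambda \tau} \;=\; \frac{\cos(\alpha(b-a)/2)}{\cos(\alpha(a+b)/2)}.
\]

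Next, subtract $1$ and apply the product-to-sum identity $\cos u - \cos v = 2 \sin(\tfrac{u+v}{2}) \sin(\tfrac{v-u}{2})$ with $u = \alpha(b-a)/2$ and $v = \alpha(a+b)/2$. The half-sum $\alpha b/2$ and half-difference $\alpha a/2$ are both non-negative (using $a \le b$), so this yields
\[
\EE_0 e^{\lambda \tau} - 1 \;=\; \frac{2 \sin(\alpha a/2)\, \sin(\alpha b/2)}{\cos(\alpha(a+b)/2)}.
\]
Bound the numerator using $\sin x \le x$ for $x \ge 0$ by $2 \cdot (\alpha a/2)(\alpha b/2)$, which is a fixed multiple of $\lambda a b$. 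The hypothesis $\sqrt{\lambda}(a+b) \le 3$ forces the cosine argument in the denominator to lie in a fixed bounded interval on which $\cos$ is bounded below by a positive absolute constant. Combining the numerator and denominator bounds and tracking the numerical constants then yields $\EE_0 e^{\lambda \tau} - 1 \le 10 \lambda ab$.

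\noindent\emph{Main obstacle.}
The argument reduces to a chain of classical steps --- the eigenfunction-martingale closed form, the sum-to-product identity, and the bound $\sin x \le x$ --- so there is no serious conceptual obstacle. The one delicate point is the numerical book-keeping at the end: one must verify that under the hypothesis $\sqrt{\lambda}(a+b)\le 3$ the lower bound on $\cos(\alpha(a+b)/2)$ in the denominator is favorable enough to absorb the factor of $2$ lost in the sine bound and still leave the overall constant at $10$ on the right-hand side.
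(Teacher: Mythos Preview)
Your approach is essentially the paper's: both compute the closed form $\EE_0 e^{\lambda\tau}=\cos(\alpha(b-a)/2)/\cos(\alpha(a+b)/2)$ and then bound the ratio by elementary means (you via the product-to-sum identity, the paper via a tangent-line concavity bound on $\cos$ followed by $\tan\theta<10\theta$ on $[0,\tfrac32]$). So there is no real methodological difference.

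There is, however, a genuine gap in your last step. The equation $\tfrac12 f''+\lambda f=0$ forces $\alpha=\sqrt{2\lambda}$, so the denominator argument is $\alpha(a+b)/2=\sqrt{\lambda}(a+b)/\sqrt{2}$, which under the hypothesis $\sqrt{\lambda}(a+b)\le 3$ can be as large as $3/\sqrt{2}\approx 2.12>\pi/2$. On that range $\cos$ is \emph{not} bounded below by a positive constant; it passes through zero, and in fact $\EE_0 e^{\lambda\tau}=+\infty$ once $\sqrt{2\lambda}\,\tfrac{a+b}{2}\ge \pi/2$. (Your optional-stopping justification via $\EE_0\tau<\infty$ is also not enough by itself: one needs the cosine factor to stay bounded away from zero on $[-a,b]$ to pass to the limit, which is exactly the same condition.) The paper's own proof writes $\sqrt{\lambda}$ where $\sqrt{2\lambda}$ belongs, so the same slip is present there; it is harmless in the applications because the lemma is only invoked with $\lambda=1$ and $a+b$ of order $1/r$, far inside the safe range. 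To make your argument go through as written, either adopt the paper's (implicit) normalization $\alpha=\sqrt{\lambda}$, or tighten the hypothesis to, say, $\sqrt{2\lambda}(a+b)\le 3$; under the latter your numerator bound gives $\lambda ab$ and the denominator is at least $\cos(3/2)\approx 0.0707$, which yields a constant of roughly $14$ rather than $10$ --- again immaterial for the rest of the paper.
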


\begin{proof}
Let $c=(a+b)/2$, and for $x \in [-c,c]$ let
\[
f(x) = \EE_x e^{\lambda \tau(-c,c)}
\]
%where the expectation $\EE_x$ refers to standard Brownian motion started at~$x$.
By translating the interval $[-a,b]$ to $[-c,c]$, it suffices to show that if $\sqrt{\lambda} c \leq 3/2$, then
	\begin{equation} \label{eq:centeredversion} f(x) \leq 1 + 10 \lambda (c+x)(c-x). \end{equation}

The function $e^{\lambda t} f(x)$ obeys the heat equation in the strip $[0,\infty) \times [-c,c]$, hence
\[
e^{\lambda t} f''(x) = - \lambda e^{\lambda t} f(x).
\]
Since $f$ is even and $f(c)=1$, we conclude that
\[
f(x) =
\frac{\cos (\sqrt{\lambda} x)}{\cos (\sqrt{\lambda} c)}.
%\qquad 0 < \lambda < \frac{\pi^2}{4c^2}.
\]

Since $\cos x$ is concave in the interval $[0,\frac32]$, we have
	\[ \cos (\sqrt{\lambda} x) \leq \cos(\sqrt{\lambda} c) + \sqrt{\lambda} (c-x) \sin (\sqrt{\lambda} c) \]
which gives
	\[ f(x) \leq 1 + \sqrt{\lambda} (c-x) \tan (\sqrt{\lambda} c). \]
Since $\tan \theta < 10 \theta$ for $\theta \in [0,\frac32]$, we conclude that for $x \in [0,c]$,
	\[ f(x) \leq 1 + 10 \lambda c (c-x) \leq 1+ 10 \lambda (c+x)(c-x) \]
which establishes \eqref{eq:centeredversion}.	
\end{proof}

The following well-known estimate appears as Proposition II.1.8 in~\cite{RY}.

\begin{lemma}\label{largedeviation}
Let $\{B(s)\}_{s \geq 0}$ be a standard Brownian motion with $B(0)=0$.  Then for any $k, s>0$
\[
\PP \left\{ \sup_{s' \in[0,s]} B(s') \ge ks \right\} \le e^{-k^2 s/2}.
\]
\end{lemma}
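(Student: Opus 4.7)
The plan is to use the standard exponential martingale (Chernoff) argument. For any $\lambda > 0$, the process $M(s') := e^{\lambda B(s')}$ is a nonnegative continuous submartingale, since $B$ is a martingale and $e^x$ is convex. The event in question, $\{\sup_{s' \in [0,s]} B(s') \ge ks\}$, coincides with $\{\sup_{s' \in [0,s]} M(s') \ge e^{\lambda ks}\}$, so applying Doob's maximal inequality for nonnegative submartingales yields
\[
\PP\left\{ \sup_{s' \in [0,s]} B(s') \ge ks \right\} \;\le\; e^{-\lambda ks}\,\EE[M(s)] \;=\; e^{-\lambda ks}\,\EE\bigl[e^{\lambda B(s)}\bigr].
\]

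Since $B(s)$ is Gaussian with variance $s$, the moment generating function is $\EE[e^{\lambda B(s)}] = e^{\lambda^2 s/2}$, giving the bound $e^{-\lambda ks + \lambda^2 s/2}$ for every $\lambda > 0$. The final step is to optimize: differentiating in $\lambda$ shows the minimum is attained at $\lambda = k$, and plugging back in gives exactly $e^{-k^2 s/2}$, which is the desired estimate.

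There is really no serious obstacle here; this is the classical one-line proof. The only points requiring minor care are that the sample paths of $B$ are continuous (so the supremum is measurable and Doob's inequality applies in its continuous-time form) and that $\lambda = k$ lies in the admissible range $\lambda > 0$, which is automatic since $k > 0$. An equivalent alternative would be to apply the optional stopping theorem to the exponential martingale $\exp(\lambda B(s') - \lambda^2 s'/2)$ at the hitting time of level $ks$; both routes give the same bound.
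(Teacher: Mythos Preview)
Your proof is correct; this is the standard exponential-martingale/Chernoff argument. The paper does not actually give its own proof of this lemma but simply cites it as Proposition~II.1.8 in Revuz--Yor~\cite{RY}, so there is nothing further to compare.
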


\subsection{The discrete harmonic function $H_\zeta(z)$} \label{sec:hzetaproperties}

Fix a lattice point $\zeta \in \Z^2$.  In this section we define the discrete harmonic function~$H_\zeta$
resembling~$F_\zeta$ in \eqref{e.fzeta}, which we will use to detect early points near $\zeta$.

For $z \in \Z^2$, let $P_n(z)$ be the probability that an $n$-step simple random walk started at the origin in $\Z^2$ ends at~$z$.  The \emph{recurrent potential kernel} for simple random walk in~$\Z^2$ is defined by
\begin{equation}
\label{eq:thepotentialkernel}
g(z) = \sum_{n=0}^\infty (P_n(0) - P_n(z)).
\end{equation}
This sum is finite for all $z \in \Z^2$, and its discrete Laplacian
	\begin{equation} \label{eq:thediscretelaplacian} \Delta g(z) = \frac{g(z+1)+g(z-1)+g(z+i)+g(z-i)}{4} - g(z) \end{equation}
vanishes for $z \neq 0$ and equals~$1$ if $z=0$.  Here we have identified $\Z^2$ with $\Z + i \Z \subset \C$.
Fukai and Uchiyama~\cite{FU} and Kozma and Schreiber~\cite{KS04} develop an asymptotic expansion for $g(z)$.  We will use only the following estimate: there are absolute constants $\lambda$ and $C_0$ (rendered explicit in~\cite{KS04}; see also~\cite[\textsection 4.4]{LL10}) such that
\begin{equation}
\label{eq:greenasymptotics}
\left| g(z) - \frac{2}{\pi} \log |z| - \lambda \right| \le C_0 |z|^{-2}.
\end{equation}

We will use this sharp asymptotic estimate of the potential kernel to show that the discrete harmonic function $H_\zeta$ defined below is close to the continuum harmonic function $F_\zeta$ (Lemma~\ref{Hsize}c), which in turn gives a discrete mean value property: the average value of $H_\zeta$ on the discrete disk $\B_r$ is very close to $H_\zeta(0)$ (Lemma~\ref{Hlevel}c).

Evidently, $g(0)=0$ and $g$ is symmetric under the dihedral
symmetries generated by the reflections across the axes
$x+iy \mapsto \pm x \pm iy$, and the diagonal, $x+iy \mapsto y+ix$.
Moreover, $g(z)>0$ for all $z\neq0$.
Exact values of $g$ near the origin can be computed using the McCrea-Whipple algorithm, described in~\cite{KS04}.  In
particular,
	\[ g(1) = 1, \qquad  g(1+i) = \frac{4}{\pi}. \]
(We will use these exact  values for convenience in specifying constants.
But what we really need is not the actual values, but the property
that $g(1+i) > g(1)$, which is not surprising since
$1+i$ is farther from the origin than $1$.)

We construct the function $H_\zeta$ for
$\zeta  = x+iy$ in the sector $S =\{\zeta \mid 0 \le y \leq x\}$.  (For $\zeta \notin S$, we choose a dihedral symmetry $\phi$ such that $\phi \zeta \in S$, and define $H_\zeta (z) := H_{\phi \zeta} (\phi z)$.)  Let
	\[ \rho = |\zeta| \]
and write
\[
	\zeta /\rho  = \alpha_1 + \alpha_2 (1+i)
\]
with $\alpha_2 = y/\rho $ and $\alpha_1 = (x-y)/\rho $.
The property we will use about the coefficients $\alpha_j$ for $j=1,2$
is that they are positive and bounded.  Indeed $0\le \alpha_j \le 1$.

Define
\[
H_\zeta(z) =  \frac{\pi}{2}[ \alpha_1g(z-\zeta  - 1) + \alpha_2g(z-\zeta  - 1-i)
- (\alpha_1+\alpha_2)g(z-\zeta ) ].
\]

Note that (using the symmetries)
\begin{align}
\label{eq:signsofHzeta}
&H_\zeta(\zeta ) = \frac{\pi}{2} [\alpha_1g(1) + \alpha_2g(1+i)] > 0 \nonumber \\
&H_\zeta(\zeta  + 1) = \frac{\pi}{2} [\alpha_2 g(1) - (\alpha_1 + \alpha_2) g(1)] < 0 \\
&H_\zeta(\zeta  + 1+i) = \frac{\pi}{2} [\alpha_1 g(1) - (\alpha_1 + \alpha_2) g(1+i)] < 0. \nonumber
\end{align}
The last inequality follows from the fact that $g(1+i)> g(1)$.

\begin {figure}[htbp]
\begin {center}
\includegraphics [width=4in]{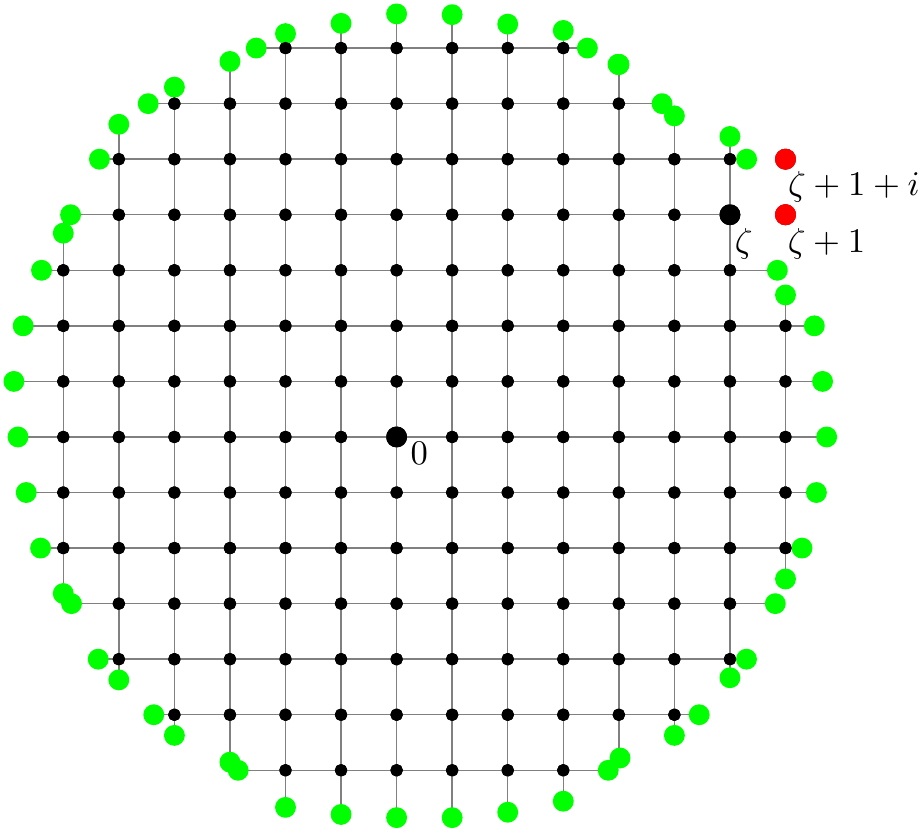}
\caption {\label{fig:hzeta} Illustration of the set $\Omega_\zeta$ for $\zeta = 6+4i$.  The function $H_\zeta$ is discrete harmonic except at the three points $\zeta$, $\zeta+1$, and $\zeta+1+i$.  Its value is positive at $\zeta$ and negative at $\zeta+1$ and $\zeta+1+i$.  For general $z \in \Grid$, we have $H_\zeta(z) = \EE H_\zeta(w)$ where~$w$ is the first of these three points hit by a Brownian motion in $\Grid$ started at~$z$.  Large green dots on the edges of $\Grid$ represent points $z \in \Grid$ for which $H_\zeta(z) = 1/2\rho$.  Together with $\zeta$, these points form the boundary of $\Omega_\zeta$.  Small black dots indicate points in $\mathbb Z^2 \cap \Omega_\zeta$.  For $z \in \Omega_\zeta$, the value $H_\zeta(z)-1/2\rho$ is proportional to the probability that a Brownian motion in $\Grid$ started at $z$ first exits $\Omega_\zeta$ at $\zeta$.}
\end {center}
\end {figure}

Extend $H_\zeta$ linearly along each edge of the grid
	\[ \Grid = \{ x+iy \mid x \in \Z \mbox{ or } y \in \Z\}. \]
(By an edge of~$\Grid$ we mean a line segment of length~$1$ whose endpoints lie in $\Z^2$.)
Now define $\Omega_\zeta $ as the connected component of the origin in the set
\[
	\{ z\in \Grid - \{\zeta \}: H_\zeta(z) > 1/2\rho \}.
\]
Recall the discrete Laplacian~$\Delta$ on~$\Z^2$, defined in \eqref{eq:thediscretelaplacian}.  We say that a function~$f$ defined on a subset $\Lambda \subset \Grid$ is \emph{grid-harmonic} if it is continuous and
	\begin{itemize}
	\item For each edge $e$ of $\Grid$, if $I \subset e \cap \Lambda$ is an interval, then $f$ is linear on $I$.
    \item For each vertex of $\mathbb Z^2$ in the interior of $\Lambda$, the sum of the slopes of the four line segments starting at $z$ (directed away from $z$) is zero.
	\end{itemize}
For example, this holds if $\Lambda = \Grid$ and $f$ is discrete harmonic on vertices and linear on edges.

\begin{lemma}\label{Hsize} ~
\begin{itemize}
\item[\em (a)] $H_\zeta$ is grid-harmonic on $\Omega_\zeta $, and $H_\zeta\ge 1/2\rho $ on $\Omega_\zeta$.
\item[\em (b)] $\zeta \in \partial \Omega_\zeta $, and for all
$z\in \partial \Omega_\zeta \backslash \{\zeta\} $ we have $H_\zeta(z) = 1/2\rho $.
\item[\em (c)] There is an absolute constant $C_1 <\infty$ such that
	\[ |H_\zeta(z) - F_\zeta(z)| \le C_1 |z-\zeta |^{-2}. \]
In particular,
\[
|H_\zeta(0) - 1/\rho | \le C_1 \rho ^{-2}.
\]
\item[\em (d)] $1 \le H_\zeta(\zeta )\le 2$.
\end{itemize}
\end{lemma}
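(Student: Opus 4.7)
My plan is to prove the four parts in the order (c), (d), (a), (b), since (a) uses only the relations \eqref{eq:signsofHzeta}, (c) furnishes the main analytic approximation feeding into (b), and (d) pins down boundary values near $\zeta$ needed in (b).

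For part (c), I would apply the sharp asymptotic expansion \eqref{eq:greenasymptotics} to each of the three translates of $g$ defining $H_\zeta$. Writing $w = z - \zeta$ and Taylor-expanding $\log|w - a|^2 = \log|w|^2 - 2\Re(a/w) + O(|a|^2/|w|^2)$ for $a \in \{1, 1+i\}$, the constants $\lambda$ and the $(2/\pi)\log|w|$ terms cancel in each difference $g(w - a) - g(w)$, leaving the leading term $-(2/\pi)\Re(a/w)$ with error $O(|w|^{-2})$. Combining with coefficients $\alpha_1, \alpha_2$ and multiplying by $\pi/2$ recovers $-\Re((\alpha_1 + \alpha_2(1 + i))/w) = -\Re((\zeta/\rho)/(z - \zeta)) = F_\zeta(z)$ with absolute error $O(|z - \zeta|^{-2})$; specializing to $z = 0$ gives $F_\zeta(0) = 1/\rho$ and $|H_\zeta(0) - 1/\rho| \leq C_1/\rho^2$. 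For part (d), I would substitute $g(1) = 1$ and $g(1 + i) = 4/\pi$ and parameterize $\zeta/\rho = \cos\theta + i\sin\theta$ with $\theta \in [0, \pi/4]$, reducing $H_\zeta(\zeta)$ to $(\pi/2)\cos\theta + (2 - \pi/2)\sin\theta$; this attains its minimum $\sqrt{2}$ at $\theta = \pi/4$ and its maximum $\sqrt{(\pi/2)^2 + (2 - \pi/2)^2} < 2$ at its unique interior critical point, yielding $1 \leq H_\zeta(\zeta) \leq 2$.

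Part (a) follows at once: since $\Delta g(\cdot - u) = 0$ except at $u$, the discrete Laplacian of $H_\zeta$ vanishes at every vertex other than $\zeta$, $\zeta + 1$, $\zeta + 1 + i$. By \eqref{eq:signsofHzeta}, $H_\zeta$ is strictly negative at the latter two, so they cannot lie in $\Omega_\zeta$, and $\zeta \notin \Omega_\zeta$ by construction. Thus $H_\zeta$ is discrete-harmonic at every $\Z^2$-vertex in $\Omega_\zeta$ and linear on each grid edge by definition, making it grid-harmonic on $\Omega_\zeta$; the inequality $H_\zeta \geq 1/(2\rho)$ is immediate from the definition of $\Omega_\zeta$.

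For part (b), if $z \in \partial\Omega_\zeta \setminus \{\zeta\}$, continuity of $H_\zeta$ gives $H_\zeta(z) \geq 1/(2\rho)$; if this were strict, a small connected neighborhood $U \subset \Grid \setminus \{\zeta\}$ of $z$ would satisfy $H_\zeta > 1/(2\rho)$ throughout, and since $z \in \overline{\Omega_\zeta}$, $U$ would meet $\Omega_\zeta$ and thus be contained in $\Omega_\zeta$ by connectedness, contradicting $z \notin \Omega_\zeta$; hence $H_\zeta(z) = 1/(2\rho)$. To show $\zeta \in \partial\Omega_\zeta$, I would combine (c) with the identity
\[
F_\zeta(z) - \frac{1}{2\rho} = \frac{\rho^2 - |z|^2}{2\rho |z - \zeta|^2},
\]
which exhibits $F_\zeta - 1/(2\rho)$ as (a multiple of) the Poisson kernel of $\B_\rho$ at $\zeta$, strictly positive on the open disk. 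Comparing with the $O(|z - \zeta|^{-2})$ error from (c) yields $H_\zeta > 1/(2\rho)$ throughout $\B_{\rho - C_2} \cap \Grid$ for some absolute $C_2$, a connected subset of $\Omega_\zeta$ containing the origin. To extend this into a neighborhood of $\zeta$, I would trace a grid path approximating the radial segment from $\B_{\rho - C_2}$ to $\zeta$: on the radial line $z = (1 - t/\rho)\zeta$ one has $F_\zeta(z) = 1/t$, which beats the $O(1/t^2)$ error from (c) once $t$ exceeds a constant, and within the remaining $O(1)$-neighborhood of $\zeta$ one finishes by direct vertex-by-vertex computation (dihedral symmetry restricts this to the sector $S$), using part (d) and linear interpolation to verify $H_\zeta > 1/(2\rho)$ on a short initial segment of the open edge from $\zeta - 1$ to $\zeta$. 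The main obstacle is precisely this last step: the approximation from (c) becomes useless within $O(1)$ of $\zeta$, so the global approximation must be spliced with a finite lattice computation to certify that $\Omega_\zeta$ is connected all the way to $\zeta$.
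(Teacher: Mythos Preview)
Your treatments of (a), (c), and (d) are essentially the paper's own arguments with only cosmetic differences: in (c) you Taylor-expand $\log|w-a|$ where the paper uses the fundamental theorem of calculus on $\int_0^1 \Re\frac{a}{z-\zeta-ta}\,dt$, and in (d) you parameterize by the angle $\theta$ where the paper works directly in the variables $\alpha_1,\alpha_2$ subject to $(\alpha_1+\alpha_2)^2+\alpha_2^2=1$; the computations and conclusions coincide. (One small omission in your (c): you should note separately that for $|z-\zeta|$ bounded the estimate holds trivially by enlarging $C_1$, since the Taylor expansion degenerates there; the paper does this explicitly for $|z-\zeta|\le 10$.)

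The real divergence is in (b). The paper disposes of both assertions in a single line, ``immediate from the definition of $\Omega_\zeta$.'' Your argument that $H_\zeta(z)=1/2\rho$ for $z\in\partial\Omega_\zeta\setminus\{\zeta\}$ is correct and is indeed the routine unpacking of that definition. But you then single out $\zeta\in\partial\Omega_\zeta$ as the ``main obstacle'' and outline a substantial program: use (c) and the Poisson-kernel identity to fill $\B_{\rho-C_2}$, trace a grid path near the radial segment toward $\zeta$, and finish with a vertex-by-vertex check in an $O(1)$ neighborhood of $\zeta$. Your caution is not misplaced---the paper's one-liner does gloss over why the connected component of $0$ in $\{H_\zeta>1/2\rho\}\setminus\{\zeta\}$ actually accumulates at $\zeta$---and your plan would rigorously close that gap. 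It is, however, far more than the paper does; the authors evidently regard the connectivity as self-evident (and in practice the later Lemma~\ref{Hlevel}(a), which depends only on (c), supplies most of it). So your proposal is correct and in the same spirit as the paper, but for (b) you are supplying detail that the paper chose to omit rather than taking a different route.
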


\begin{proof}
~
\begin{itemize}
\item[(a)]  By \eqref{eq:signsofHzeta} we have $H_\zeta(\zeta + 1)< 1/2\rho$ and $H_\zeta(\zeta + 1+i)<1/2\rho$, so the points $\zeta + 1$ and $\zeta + 1+i$ are not in $\Omega_\zeta$.
Furthermore by definition $\zeta\notin \Omega_\zeta$.  Since $H_\zeta$ is
discrete harmonic at all other lattice points, its linear extension is
grid-harmonic in $\Omega_\zeta$.  Consequently, the inequality
$H_\zeta \ge 1/2\rho$ follows
from the maximum principle.

\item[(b)] This is immediate from the definition of~$\Omega_\zeta$.

\item[(c)] The approximation for $|z-\zeta| \le 10$ is trivial (choosing
$C_1$ sufficiently large).  For $|z-\zeta| > 10$, the bound \eqref{eq:greenasymptotics} implies
\[
H_\zeta(z) - \alpha_1 (\log|z-\zeta - 1| - \log |z-\zeta|)
- \alpha_2 (\log|z-\zeta - 1-i| - \log |z-\zeta|)
= O\left(\frac{1}{|z-\zeta|^2}\right).
\]
By the fundamental theorem of calculus,
\begin{align*}
\alpha_1 (\log|z-\zeta - & 1| - \log |z-\zeta|)
+ \alpha_2 (\log|z-\zeta - 1-i| - \log |z-\zeta|)  \\
&= -\alpha_1 \int_0^1 \Re \frac{1}{z-\zeta-t} dt - \alpha_2 \int_0^1 \Re \frac{1+i}{z-\zeta-t(1+i)} dt \\
&=  -\alpha_1 \Re \frac{1}{z-\zeta} - \alpha_2 \Re \frac{1+i}{z-\zeta} + O\left(\frac{1}{|z-\zeta|^2}\right) \\
&= F_\zeta(z) + O\left(\frac{1}{|z-\zeta|^2}\right).
\end{align*}

\item[(d)] Using the exact values $g(1)=1$ and $g(1+i)=4/\pi$, we have
	\[ H_\zeta(\zeta) = \frac{\pi}{2} (\alpha_1 + \frac{4}{\pi}\alpha_2) = \frac{\pi}{2}\alpha_1 + 2\alpha_2 \]
with the constraints $\alpha_i \ge 0$ and
\[
(\alpha_1 + \alpha_2)^2 + \alpha_2^2 = 1.
\]
Hence $\alpha_1 + \alpha_2 \le 1$ and
\[
 (\pi/2)\alpha_1 + 2\alpha_2 \le 2 (\alpha_1 + \alpha_2) \le 2.
\]
Thus $H_\zeta(\zeta) \le 2$.
For the lower bound, one checks that $\alpha_1$ and hence
$(\pi/2)\alpha_1 + 2\alpha_2$ is a concave function of $\alpha_2$.
At the extreme points $(\alpha_1,\alpha_2) = (1,0)$ and $(\alpha_1,\alpha_2) = (0,1/\sqrt2)$, the values of $H_\zeta(\zeta)$ are $\pi/2$ and $\sqrt2$.  Therefore,
$H_\zeta(\zeta) \ge \sqrt2 \ge 1$. $\qed$
\end{itemize}
\renewcommand{\qedsymbol}{}
\end{proof}

A corollary of Lemma \ref{Hsize} is the following.

\begin{lemma} \label{Hlevel}There is an absolute constant $C_2<\infty$ such that
\begin{itemize}
\item[\em (a)] $\B_{\rho  - C_2} \subset \Omega_\zeta  \subset B_{\rho  + C_2}$, where $B_r$ is
the disk of radius $r$ in $\R^2$.
\item[\em (b)] $1/2\rho  \le H_\zeta(z) \le 1/(\rho  - r - C_2)
\quad$ for $0\le r \le \rho  - C_2$ and $z\in \B_r$.
\item[\em (c)] For  all $0 < r \le \rho $,
\[
\left| \sum_{z\in \B_r}( H_\zeta(z)-H_\zeta(0)) \right|
+  \left| \sum_{z\in \Omega_\zeta\cap\Z^2 } (H_\zeta(z)-H_\zeta(0))\right| \le C_2 \log \rho.
\]
\end{itemize}
\end{lemma}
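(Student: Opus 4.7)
The backbone of the plan is the explicit identity
\[
F_\zeta(z) - \frac{1}{2\rho} \;=\; \frac{\rho^2 - |z|^2}{2\rho\,|\zeta - z|^2},
\]
a direct computation from~\eqref{e.fzeta} which shows $F_\zeta - 1/(2\rho)$ is the Poisson kernel of $B_\rho$ with pole at $\zeta$. Together with the approximation $|H_\zeta - F_\zeta| \le C_1/|z-\zeta|^2$ of Lemma~\ref{Hsize}(c), this gives sharp two-sided control of $H_\zeta$ on the lattice.

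For part~(a), on $z \in \B_{\rho - C_2}$ one has $|z-\zeta| \ge \rho - |z| \ge C_2$, so the identity above gives $F_\zeta(z) - 1/(2\rho) \ge C_2/(2|z-\zeta|^2)$, which dominates the approximation error $C_1/|z-\zeta|^2$ once $C_2 > 2C_1$; connectedness of $\B_{\rho - C_2}$ as a subgraph of $\Grid$ then forces $\B_{\rho - C_2} \subset \Omega_\zeta$. The symmetric calculation on the circle $|z| = \rho + C_2$ (where $|z-\zeta| \ge C_2$ by the triangle inequality) gives $H_\zeta < 1/(2\rho)$ on a barrier that confines $\Omega_\zeta \subset B_{\rho + C_2}$. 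Part~(b) is then immediate: the lower bound is Lemma~\ref{Hsize}(a), and the upper bound reduces, via the circular bound $F_\zeta(z) \le 1/(\rho - |z|)$, to the algebraic inequality $1/u + C_1/u^2 \le 1/(u - C_2)$ with $u = \rho - r$, valid for $C_2 \ge C_1$.

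Part~(c) is the substantive step. I decompose
\[
\sum_{z \in \B_r}\bigl(H_\zeta(z) - H_\zeta(0)\bigr) \;=\; \sum_{z \in \B_r}\bigl(F_\zeta(z) - F_\zeta(0)\bigr) \;+\; R,
\]
where Lemma~\ref{Hsize}(c) gives $|R| \le C_1 \sum_{\B_r} 1/|z-\zeta|^2 + |\B_r| C_1/\rho^2 = O(\log \rho)$; the first sum is controlled by a shell decomposition around $\zeta$. For the $F_\zeta$ sum, the continuum mean value property gives $\int_{B_r} F_\zeta\,dA = \pi r^2 F_\zeta(0)$, and the Gauss-circle error $(|\B_r| - \pi r^2)F_\zeta(0) = O(1)$ is negligible. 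The Riemann-sum discrepancy $\sum F_\zeta(z) - \int_{\cup V_z} F_\zeta\,dA$ is $O(1)$ because, on each unit Voronoi cell, the first-order correction vanishes by symmetry, the second-order correction vanishes since $F_\zeta$ is harmonic, the third-order correction again vanishes by symmetry, and the fourth-order correction $\sim 1/|z-\zeta|^5$ is summable over $\Z^2$.

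The main obstacle is the boundary discrepancy $\int_{(\cup V_z)\triangle B_r} F_\zeta\,dA$: the strip $B_{r+1} \setminus B_{r-1}$ can come within $O(1)$ of the singularity of $F_\zeta$ at $\zeta$ when $r \approx \rho$, so crude bounds of the form (strip area)$\times \|F_\zeta\|_\infty$ fail badly. The plan is to exploit the circular mean value $\int_0^{2\pi}(F_\zeta(s e^{i\phi}) - F_\zeta(0))\,d\phi = 0$ for all $s < \rho$, which makes any full sub-annulus integrate to zero and leaves only the irregular Voronoi-vs-disk remainder; a polar-coordinate estimate based at $\zeta$ bounds this by $O(\log \rho)$. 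For the $\Omega_\zeta$ sum, part~(a) localises $\Omega_\zeta \triangle \B_\rho$ to the annulus $\{|z| \in [\rho - C_2, \rho + C_2]\}$; on this annulus $\sum(|F_\zeta(z)| + C_1/|z-\zeta|^2) = O(\log \rho)$ by the same polar-around-$\zeta$ bookkeeping, so the $\Omega_\zeta$ sum differs from the $\B_\rho$ sum by $O(\log \rho)$.
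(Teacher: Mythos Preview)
Your argument is correct and follows the same overall architecture as the paper's proof: compare $H_\zeta$ to $F_\zeta$ via Lemma~\ref{Hsize}(c), compare the lattice sum of $F_\zeta$ to the integral, and invoke the continuum mean value property. The differences are organisational rather than conceptual.

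For part~(a), your explicit Poisson-kernel identity $F_\zeta(z)-\tfrac{1}{2\rho}=\tfrac{\rho^2-|z|^2}{2\rho\,|z-\zeta|^2}$ is cleaner than the paper's route, which instead bounds the change in $F_\zeta$ along its integral curves using $|\nabla F_\zeta(z)|=1/|z-\zeta|^2$. Both yield the same two-sided inequality. Part~(b) is handled identically.

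For part~(c) the paper takes a shortcut you do not: it first treats $r\le\rho-2C_1$, where $\B_r$ stays bounded away from the singularity of $F_\zeta$, so the sum-to-integral comparison needs only the crude gradient bound $|\nabla F_\zeta|\le 1/|z-\zeta|^2$ (giving $O(\log\rho)$, not your sharper $O(1)$ from the Taylor expansion). It then reduces the remaining cases $\rho-2C_1<r\le\rho$ and $\Omega_\zeta$ to this one by bounding $\sum_{z}|H_\zeta(z)|$ over the thin annulus $\rho-2C_1\le|z|\le\rho+2C_1$ directly --- which is the same polar-around-$\zeta$ bookkeeping you invoke at the end. Your approach instead handles all $r\le\rho$ in one pass and confronts the singular boundary strip head-on; this works, but note that the circular-mean-value trick you describe is not really needed: the naive $L^1$ bound $\int_{\text{strip}}|F_\zeta|\,dA=O(\log\rho)$ already follows from your polar estimate, since the strip has width $O(1)$ and the portion at distance $s$ from $\zeta$ has length $O(1)$ for $s>1$. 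The paper's organisation avoids ever thinking about this strip for the integral and keeps the singularity entirely in the discrete annulus sum.
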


\begin{proof}
~
\begin{itemize}
\item[(a)] Recall that Lemma \ref{Hsize} (c) says
\[
|H_\zeta(z) - F_\zeta(z)| \le C_1 /|z-\zeta|^2.
\]
We will show that
\begin{equation}\label{outer}
F_\zeta(z) \le \frac{1}{2\rho} - \frac{C}{|z-\zeta|^2},
\quad z\in \partial B_{\rho + 2C}
\end{equation}
\begin{equation}\label{inner}
F_\zeta(z) \ge \frac{1}{2\rho} + \frac{C}{|z-\zeta|^2},
\quad z\in \partial B_{\rho - 2C}.
\end{equation}
Since the level curve $\{z:F_\zeta(z) = 1/2\rho\}$ is the circle
$\partial B_\rho\backslash\{\zeta\} $, this confirms part~(a) with $C_2=2C_1$.

To prove \eqref{outer} and \eqref{inner}, note first
that
\[
|\nabla F_\zeta (z)| = \frac{1}{|z-\zeta|^2}
\]
Indeed, by the Cauchy-Riemann equations, this is the modulus
of the (complex) derivative of $1/(\zeta-z)$.
Any curve from $\partial B_\rho$ to
$\partial B_{\rho+C}$ has length at least $C$.  In particular
following the integral curves vector field $\nabla F_\zeta$
we see that $F_\zeta$ changes by at least
\[
\frac{C}{\max_z |z-\zeta|^2}
\]
for $z$ along the curve.  These integral curves
are the circles through $\zeta$ centered on the tangent line
to $\partial B_\rho$ at $\zeta$, and $|z-\zeta|$ is
nearly constant along the portion of the integral curve
between $\partial B_\rho$ and nearby concentric circles.
Therefore the change in $F_\zeta$ is bounded as in \eqref{outer},
and similarly for \eqref{inner}.

\item[(b)] The lower bound $1/2\rho$ holds for all $z\in \Omega_\zeta$.
As we saw in (a), the level curves of $F_\zeta$ differ from
level sets of $H_\zeta$ by at most a fixed distance $2C_1$.  The
supremum of $F_\zeta$ in $B_r$ is $F_\zeta((r/\rho)\zeta) = 1/(\rho-r)$.

\item[(c)] First suppose that $r \leq \rho - 2C_1$.  By Lemma \ref{Hsize}(c),
\begin{align*}
\left| \sum_{z\in \B_r}( H_\zeta(z)-H_\zeta(0)) -
\sum_{z\in \B_r}( F_\zeta(z)-F_\zeta(0)) \right|
& \le
\sum_{z\in \B_r}
\left(\frac{C_1}{|z-\zeta|^2} + \frac{C_1}{\rho^2}\right) \\
& \le 8\pi C_1\log \rho
\end{align*}

Let $B^\Box_r$ be the union of unit squares around lattice points of $\B_r$.
Since
$|\nabla F_\zeta| \le 1/|z-\zeta|^2$, we have (writing $dz$ for area measure)
\[
\left|\int_{B^\Box_r}F_\zeta(z) dz - \sum_{z\in \B_r} F_\zeta(z) \right|
\le 4\int_{B_r}\frac{dz}{|z-\zeta|^2}  \le 8\pi \log \rho.
\]
Next,
\begin{equation}\label{annulus}
\left|\int_{B^\Box_r} F_\zeta(z)dz - \int_{B_{r}} F_\zeta(z) dz\right| \le
2\int_0^{2\pi}\left| \frac{1}{re^{i\theta} - \zeta} \right| \, r d\theta \le
8\pi \log \rho.
\end{equation}
Moreover,
\[
\left|\int_{B_{r}} F_\zeta(0) dz - \sum_{z\in \B_r}F_\zeta(0) \right|
= |\pi r^2 - \# \B_r|/\rho \le 10r/\rho \le 10.
\]
Combining all these inequalities, we have shown that
\[
\sum_{z\in \B_r}( H_\zeta(z)-H_\zeta(0))
\]
is within $C_2\log \rho$ of
\begin{equation}\label{meanval}
\int_{B_{r}}( F_\zeta(z) - F_\zeta(0))dz = 0.
\end{equation}
The last equation \eqref{meanval} holds because $F_\zeta$ is harmonic in $B_{r}$.

We now turn to the cases of $\Omega_\zeta$ and  $\B_r$ with $\rho - 2C_1 < r \leq \rho$.  This follows from the case $\B_{\rho -2C_1}$, which we have already proved. In fact,
these other sums differ only on lattice sites in $\rho - 2C_1 \le |z| \le \rho + 2C_1$, and the sum over these sites is majorized the same way as in \eqref{annulus},
	\[ \sum_{z \in \B_{\rho+2C_1} \backslash \B_{\rho-2C_1}} |H_\zeta(z)| \leq
	\sum_{z \in \B_{\rho+2C_1} \backslash \B_{\rho-2C_1}} \min(\frac{2}{|z-\zeta|}, 4) \leq 32\pi C_1 \log \rho. \qed \]
\end{itemize}
\renewcommand{\qedsymbol}{}
\end{proof}

\begin{remark}
One can replace
$H_\zeta$ with harmonic functions
for which Lemma \ref{Hlevel}(a) and~(b) hold with~$B_r$ replaced by shapes other than disks.  However,  the approximate
mean value property Lemma \ref{Hlevel}(c) relies on~$B_r$ being a disk.  The main
ingredient is the continuum mean value property \eqref{meanval},
which can be rephrased as saying that harmonic measure on
$\partial B_r$ from the origin is the same as from a uniform
point in $B_r$.  Accordingly, the place where the approximate mean value property
Lemma~\ref{Hlevel}(c) is used marks a crucial step in the proof: The
bound for $\B_r$ is used in the proof of
Lemma~\ref{lem:earlyimplieslate}, and the bound for $\Omega_\zeta$ in
the proof of Lemma~\ref{lem:lateimpliesearly}.
\end{remark}

\subsection{The martingale $M_\zeta(t)$}
\label{sec:martingale}

For each integer $n$, recall that the set $A(n)$ consists of
$n$ lattice points with $A(n)\subset A(n+1)$ growing according to
the IDLA scheme.
For $\zeta \in \Z^2$, we will define a modified cluster $A_\zeta(n)$ in which particles are stopped on exiting $\Omega_\zeta$; moreover, we will define $A_\zeta(t)$ for all $t\in \R$, not just for integer values.

To define the modified cluster, recall first the notion of Brownian motion $\beta(s)$ on the grid
	\[ \Grid = \{x+iy \in \C \mid x\in \Z \mbox{ or } y \in \Z \}. \]
To construct $\beta(s)$,
take a standard Brownian motion $B(s)$ starting at $0$ at time $s_0=0$
until the first time $s=s_1$ that $B(s)= \pm 1$. For each open interval of the set $S_1 = \{s: 0 < s < s_1,
\  B(s) \neq 0\}$ choose a direction (up, down, left, right)
with equal probability $1/4$, independently.  Define a corresponding
motion of a particle $\beta(s) = B(s)$ in an interval in which right was chosen
(and $\beta(s)= B(s)i$ in an interval in which up was chosen, and
similarly for left and down).  On each interval except the final one, the particle returns to the origin.
On the final interval it reaches a lattice point adjacent to the origin for the first time.  We continue the process (centered at this new lattice point) until it hits a point adjacent to that one, etc.

Let $\widetilde{\beta}_0(s), \widetilde{\beta}_1(s), \ldots$ be independent Brownian motions on the grid $\Grid$, and for each integer $n\geq 0$, let
	\[ s^*_n = \inf \left\{ s \geq 0 \mid \widetilde{\beta}_n(s) \in (\Z^2 - A_\zeta(n)) \cup (\Grid - \Omega_\zeta) \right\} \]
be the first time that $\widetilde{\beta}_n$ visits either a lattice point outside the cluster $A_\zeta(n)$, or any point outside the set $\Omega_\zeta$.
Then for $0 \leq s \leq 1$, define
	\[ A_\zeta(n+s) = A_\zeta(n) \cup \{ \beta_{n+1}(s) \} \]
where
	\[ \beta_n(s) = \widetilde{\beta}_n \left( \min \left( \frac{s}{1-s},\, s^*_n \right) \right), \qquad 0 \leq s < 1 \]
(and by definition, $\beta_n(1) = \widetilde{\beta}_n(s^*_n)$).  The choice of $\frac{s}{1-s}$ is not important here; for our purposes, any increasing function $f(s)$ satisfying $f(0)=0$ and $f(s) \uparrow \infty$ as $s \uparrow 1$ would suffice in its place.  Effectively, the particle moves at an ever-increasing rate so that it \emph{a.s.} visits the set $(\Z^2 - A_\zeta(n)) \cup (\Grid - \Omega_\zeta)$ in less than unit time.

The points of $\partial \Omega_\zeta $ where (eventually) many particles accumulate include~$\zeta $, but are typically non-lattice points on the grid~$\Grid$.
We regard $A_\zeta(t)$ as a multiset: it may have points of multiplicity greater than one on the boundary $\partial \Omega_\zeta$, or at interior vertices at times when the moving particle is on top of an occupied site.
When summing a function over $A_\zeta(t)$, we take these multiplicities into account; thus for a function $f$ defined on $\Omega_\zeta \cup \partial \Omega_\zeta$ and $t \in [n,n+1)$, the notation $\sum_{z \in A_\zeta(t)} f(z)$ means,
	\[ \sum_{z \in A_\zeta(t)} f(z) := \sum_{j=0}^{n-1} f(\beta_j(1)) + f(\beta_{n}(t-n)). \]

Recall the function $H_\zeta$ defined in Section~\ref{sec:exittimes}.  Because $H_\zeta$ is grid-harmonic in $\Omega_\zeta $ (that is, discrete harmonic at lattice points, and linear on edges) , the process
\[
M_\zeta(t) := \sum_{z\in A_\zeta(t)} (H_\zeta(z)-H_\zeta(0))
\]
(summing with multiplicity) is a continuous-time martingale adapted to the filtration
	\[ \mathcal{F}_t = \sigma \{ A_\zeta(s) \mid 0 \leq s \leq t \}. \]
We will refer to $M_\zeta$  as the martingale with pole at $\zeta$.

By the martingale representation theorem of Dambis and Dubbins-Schwarz (see, e.g.~\cite[Theorem~V.1.6]{RY}),
\[
M_\zeta(t) = B_\zeta(S_\zeta(t))
\]
where $B_\zeta(s)$ is a standard Brownian motion and $S_\zeta(t) = \langle M_\zeta, M_\zeta \rangle_t$ is the quadratic variation of~$M_\zeta$.

Note that $S_\zeta(n)$ is a stopping time with respect to the filtration $\{\mathcal{F}_{T_\zeta(s)} \}_{s \geq 0}$, where $T_\zeta(s) = \inf \{t \mid S_\zeta(t) > s \}$; and $B_\zeta(s)$ is adapted to this filtration.
By the strong Markov property (\cite[Theorem~2.16]{MP}), for any $n$, the process
	\[ \widetilde{B}^n_\zeta(u) := B_\zeta(S_\zeta(n)+u) - B_\zeta(S_\zeta(n)) \]
is a standard Brownian motion that is independent of $\mathcal{F}_n$.
% to be pedantic, it's independent of \mathcal{F}_{T(S(n))}.  It may be that T(S(n))<n, but in this case \mathcal{F}_{T(S(n))} = \mathcal{F}_n because the particle is not moving during the time interval [T(S(n)),n].
It follows that we can write
	\[ B_\zeta(s) = B_\zeta(S_\zeta(n)) + \widetilde{B}^{n}_\zeta (s - S_\zeta(n)), \qquad  s \in [S_\zeta(n), S_\zeta(n+1)]. \]
where $\widetilde{B}^0_\zeta(\cdot), \widetilde{B}^1_\zeta(\cdot), \ldots$ are independent standard Brownian motions started at~$0$.

For $-a < 0 < b$, let
	\[ \tau_n(-a,b) = \inf \{u > 0 \mid \widetilde{B}^n_\zeta(u) \notin [-a,b] \} \]
be the first exit time of $\widetilde{B}^n_\zeta$ from the interval $[-a,b]$.

\begin{lemma}
\label{lem:boundedbyexittime}
Fix $\zeta \in \Z^2$.  Let
	\[ -a_n = \min_{z \in \partial A_\zeta(n)} (H_\zeta(z)-H_\zeta(0)) \]
	\[ b_n = \max_{z \in \partial A_\zeta(n)} (H_\zeta(z)-H_\zeta(0)). \]
Then
	\[ S_\zeta(n+1) - S_\zeta(n) \leq \tau_{n}(-a_n,b_n). \]
\end{lemma}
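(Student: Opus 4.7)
The plan is to reduce the inequality $S_\zeta(n+1) - S_\zeta(n) \le \tau_n(-a_n, b_n)$ to a statement about the pointwise values of $H_\zeta$ and then invoke the maximum principle. Between integer times $n$ and $n+1$ only the particle $\beta_{n+1}$ moves, so
\[ M_\zeta(n+s) - M_\zeta(n) = H_\zeta(\beta_{n+1}(s)) - H_\zeta(0). \]
The map $s \mapsto \min(s/(1-s), s^*_n)$ is a continuous non-decreasing reparametrization of $[0,1)$, so quadratic variation is preserved and
\[ S_\zeta(n+1) - S_\zeta(n) = \langle X \rangle_{s^*_n}, \qquad X(u) := H_\zeta(\widetilde\beta_n(u)) - H_\zeta(0). \]
Hence, by the Dambis--Dubins--Schwarz representation used in the paper, $X(u) = \widetilde B^n_\zeta(\langle X \rangle_u)$ for $u \in [0, s^*_n]$.

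The reduction step is then elementary: if $X(u) \in [-a_n, b_n]$ for every $u \in [0, s^*_n]$, then $\widetilde B^n_\zeta(v)$ lies in $[-a_n, b_n]$ for all $v \in [0, \langle X \rangle_{s^*_n}]$, forcing $\tau_n(-a_n, b_n) \ge \langle X \rangle_{s^*_n}$, which is the claim. So the whole proof reduces to confirming that $X$ stays in $[-a_n, b_n]$. This in turn follows from the maximum principle applied to $H_\zeta$ on $\overline{\Omega_\zeta}$. The walker $\widetilde\beta_n(u)$ is confined to $\overline{\Omega_\zeta}$ until time $s^*_n$, and by Lemma \ref{Hsize}(a) the function $H_\zeta$ is grid-harmonic on $\Omega_\zeta$. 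By Lemma \ref{Hsize}(b), $\partial \Omega_\zeta$ consists of $\zeta$, where $H_\zeta$ takes the value $H_\zeta(\zeta)$, together with points where $H_\zeta = 1/(2\rho)$; these are exactly the extreme values of $H_\zeta$ on $\overline{\Omega_\zeta}$. Since every point of $\partial \Omega_\zeta$ is a potential termination site for the walker, $\partial \Omega_\zeta \subseteq \partial A_\zeta(n)$, so both extremes appear in the range over which $b_n$ and $-a_n$ are taken. The maximum principle then gives $H_\zeta(z) - H_\zeta(0) \in [-a_n, b_n]$ for every $z \in \overline{\Omega_\zeta}$, completing the argument.

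The only step with any subtlety is this last one: the walker may wander through interior lattice points of $A_\zeta(n)$ without stopping, and at such points $H_\zeta$ can be sizeable (for instance if the path approaches $\zeta$). The max principle is what converts these values back to the boundary extremes, and the essential observation is that both of those extremes, $H_\zeta(\zeta)$ on the high side and $1/(2\rho)$ on the low side, are automatically realized by points of $\partial A_\zeta(n)$ because $\partial \Omega_\zeta \subseteq \partial A_\zeta(n)$. Once that is noted, the bound $X(u) \in [-a_n, b_n]$ follows and, together with the martingale representation, yields the lemma.
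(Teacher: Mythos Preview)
Your overall strategy---reduce to showing $X(u)\in[-a_n,b_n]$ via the maximum principle, then invoke the Dambis--Dubins--Schwarz representation---is exactly the paper's, and the first two paragraphs are fine. The gap is in the maximum-principle step: you apply it on the wrong domain.

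You argue that the walker stays in $\overline{\Omega_\zeta}$, apply the maximum principle on all of $\Omega_\zeta$, and then claim $\partial\Omega_\zeta\subseteq\partial A_\zeta(n)$ so that the global extrema of $H_\zeta$ (namely $H_\zeta(\zeta)$ and $1/2\rho$) are captured by $b_n$ and $-a_n$. That inclusion is false in general. The set $\partial A_\zeta(n)$ has to mean the reachable boundary of the cluster---the lattice points in $\Omega_\zeta\cap(\Z^2\setminus A_\zeta(n))$ adjacent to $A_\zeta(n)$, together with whatever points of $\partial\Omega_\zeta$ lie on edges emanating from $A_\zeta(n)\cap\Z^2\cap\Omega_\zeta$. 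When $n$ is small (say $A_\zeta(1)=\{0\}$), the walker stops at a nearest neighbor of the origin and never comes close to $\partial\Omega_\zeta$; in that case $\partial A_\zeta(1)=\{\pm 1,\pm i\}$ and certainly $\zeta\notin\partial A_\zeta(1)$. More to the point, if your interpretation were correct then $b_n\ge H_\zeta(\zeta)-H_\zeta(0)\ge 1$ for every $n$, and the lemma would be useless: the application in Lemma~\ref{lem:earlyquadraticvariation} needs $b_n$ of order $1/(r_0-\sqrt{n/\pi})$, which is tiny.

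The fix is simply to apply the maximum principle on the correct region. The walker, up to time $s^*_n$, is confined not to all of $\overline{\Omega_\zeta}$ but to $A_\zeta(n)$ together with the edges joining it to $\partial A_\zeta(n)$. Since $H_\zeta$ is grid-harmonic on $\Omega_\zeta\supset A_\zeta(n)$, the (discrete) maximum principle on $A_\zeta(n)$ gives $-a_n\le H_\zeta(z)-H_\zeta(0)\le b_n$ for all $z\in A_\zeta(n)$; linearity of $H_\zeta$ on edges extends this to all intermediate grid points, and the rest of your argument goes through. This is precisely what the paper does.
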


\begin{proof}
By the maximum principle, since $H_\zeta$ is harmonic on $A_\zeta(n) \subset \Omega_\zeta$, we have
	 \[ -a_n \leq H_\zeta(z) - H_\zeta(0) \leq b_n \]
for all $z \in A_\zeta(n)$.  Hence for all $t \in [n,n+1]$, we have
	\[ -a_n \leq M_\zeta(t) - M_\zeta(n) \leq b_n. \]
That is, for all $s \in [S_\zeta(n), S_\zeta(n+1)]$, we have
	\[ -a_n \leq B_\zeta(s) - B_\zeta(S_\zeta(n)) \leq b_n. \]
That is, for $u \in [0, S_\zeta(n+1)-S_\zeta(n)]$, we have
	\[ -a_n \leq \widetilde{B}^n_\zeta(u)  \leq b_n. \]
Hence the first exit of $\widetilde{B}^n$ from the interval $[-a_n,b_n]$ occurs after time $S_\zeta(n+1) - S_\zeta(n)$, which gives $S_\zeta(n+1) - S_\zeta(n) \leq \tau_n(-a_n,b_n)$.
\end{proof}

Fix $m\geq 1$.  To detect the presence of $m$-early points in the aggregate $A(t)$, we will use the martingales $M_\zeta$ for $\zeta$ lying just outside the aggregate.  The next lemma helps control the elapsed time $S_\zeta(t)$ on the event that there are no $(m+1)$-early points.  Define
\[
r_0(t) = \sqrt{t/\pi} + 4m + 2C_2.
\]

\begin{lemma}
\label{lem:earlyquadraticvariation}
For all $t>0$ and all $\zeta \in \Z^2$ such that $|\zeta| \geq r_0(t)$,
\begin{equation}\label{Sbound}
\EE \left[ e^{S_\zeta(t)} \one_{\Early_{m+1}[t]^c} \right] \leq t^{40}.
\end{equation}
\end{lemma}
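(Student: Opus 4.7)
The plan is to control $S_\zeta(t)$ on $\Early_{m+1}[t]^c$ by combining Lemma~\ref{lem:boundedbyexittime} with the exponential-moment bound of Lemma~\ref{lem:exittime} and the asymptotics for $H_\zeta$ in Lemmas~\ref{Hsize} and~\ref{Hlevel}. The subtlety is that a priori the quantities $a_n,b_n$ of Lemma~\ref{lem:boundedbyexittime} could involve values of $H_\zeta$ near $\zeta$, where $H_\zeta=\Theta(1)$, giving an essentially useless bound; the core task is to show this does not happen on the good event.

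First, couple $A$ and $A_\zeta$ using the common grid Brownian motions $\widetilde\beta_n$. A routine induction, based on the fact that if $A_\zeta(n)\cap\Z^2\subset A(n)$ then $\Z^2\setminus A(n)\subset\Z^2\setminus A_\zeta(n)$, so the $A_\zeta$-walker stops no later than the $A$-walker (and when it stops at a lattice point not already in $A(n)$ it is at the same place), yields $A_\zeta(n)\cap\Z^2\subset A(n)$ for all $n$. Combined with Lemma~\ref{lem:earlylatealternatedefinition}, on $\Early_{m+1}[t]^c$ we obtain $A_\zeta(n)\cap\Z^2\subset\B_{r_n}$ with $r_n:=\sqrt{n/\pi}+m+1$ for every $n\le t$. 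The hypothesis $\rho:=|\zeta|\ge r_0(t)$ forces $r_n+1\le\rho-C_2$, so by Lemma~\ref{Hlevel}(a) every lattice neighbor of the cluster lies strictly inside $\Omega_\zeta$; consequently the $(n+1)$st walker stops at a lattice point in $\B_{r_n+1}\subset\Omega_\zeta$ and never reaches $\de\Omega_\zeta$ (in particular, never $\zeta$).

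Thus on $\Early_{m+1}[t]^c$ the quantities $a_n,b_n$ are dominated by the deterministic
\[ \bar a_n=\max_{z\in\B_{r_n+1}}\!\bigl(H_\zeta(0)-H_\zeta(z)\bigr),\qquad \bar b_n=\max_{z\in\B_{r_n+1}}\!\bigl(H_\zeta(z)-H_\zeta(0)\bigr), \]
and by Lemma~\ref{Hsize}(c) together with the explicit form of $F_\zeta$ these satisfy $\bar a_n=O(r_n/\rho^2)$ and $\bar b_n=O(r_n/(\rho(\rho-r_n)))$, with $\bar a_n+\bar b_n\le 3$ for $m$ large. Monotonicity $\tau_n(-a_n,b_n)\le\tau_n(-\bar a_n,\bar b_n)$, the independence of the Brownian motions $\widetilde B^n_\zeta$ given by the martingale representation, and Lemma~\ref{lem:exittime} with $\lambda=1$ then give
\[ \EE\bigl[e^{S_\zeta(t)}\one_{\Early_{m+1}[t]^c}\bigr]\le\prod_{n=0}^{t-1}\EE\bigl[e^{\tau_n(-\bar a_n,\bar b_n)}\bigr]\le\exp\!\Bigl(10\sum_{n=0}^{t-1}\bar a_n\bar b_n\Bigr). \]

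The remaining sum $\sum_n\bar a_n\bar b_n\lesssim\sum_n r_n^2/(\rho^3(\rho-r_n))$ is estimated by splitting into $r_n<\rho/2$ (contribution $O(1)$) and $\rho/2\le r_n\le\rho-3m$ (contribution $O(\log(\rho/m))$, via the change of variables $dn\asymp r_n\,dr_n$), yielding an exponential moment of size $(\rho/m)^{O(1)}\le t^{O(1)}\le t^{40}$ for $t$ large; the worst case is $\rho\asymp\sqrt t$, and larger $|\zeta|$ only improves the bound. The main obstacle is the reduction carried out in the second paragraph: without the geometric buffer $\rho-r_n\ge 3m$ that keeps the walker a bounded distance from $\de\Omega_\zeta$, one would have to allow $b_n=\Theta(1)$ and would only obtain the worthless bound $\EE[e^{S_\zeta(t)}]\le e^{O(\sqrt t)}$.
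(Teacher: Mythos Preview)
Your approach is correct and essentially the same as the paper's: on $\Early_{m+1}[t]^c$ the cluster stays inside $\B_{r_n}\subset\Omega_\zeta$, so Lemma~\ref{lem:boundedbyexittime} combined with the deterministic bounds on $H_\zeta$ from Lemma~\ref{Hlevel}(b), followed by Lemma~\ref{lem:exittime} with $\lambda=1$ and the independence of the increments $\widetilde B^n_\zeta$, yields $\log\EE[e^{S_\zeta(t)}\one]\le 10\sum_n\bar a_n\bar b_n=O(\log t)$. The paper uses the cruder uniform bound $\bar a=1/(2r_0)$ (in place of your $\bar a_n=O(r_n/\rho^2)$) and $\bar b_n=1/(r_0-\sqrt{n/\pi}-m-2-C_2)$, then evaluates the sum as one explicit integral to get exponent $10\pi\log(r_0/C_2)\le 40\log t$; your coupling paragraph simply makes explicit the identification $A_\zeta(n)=A(n)$ on the good event that the paper leaves implicit.
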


\begin{proof}
Fix $t$, and abbreviate $r_0 = r_0(t)$.
On the event $\Early_{m+1}[t]^c$, we have for all $n \leq t$
	\[ A(n) \subset \B_{\sqrt{n/\pi}+m+1}. \]
Hence by Lemma \ref{Hlevel}(b), on the event $\Early_{m+1}[t]^c$ we have
	\[ -\frac{1}{2r_0} \leq H_\zeta(z) - H_\zeta(0) \leq \frac{1}{r_0 - \sqrt{n/\pi} - m -2- C_2} \]
for all positive integers $n \leq t$ and all $z \in A_\zeta(n)$.

We obtain from Lemma~\ref{lem:boundedbyexittime},
\[
(S_\zeta(n) - S_\zeta(n-1))\one_{\Early_{m+1}[t]^c}
\le \tau_n \left(-a,b_n \right).
\]
where \[ a = \frac{1}{2r_0}, \qquad b_n=\frac{1}{r_0 - \sqrt{n/\pi} - m -2- C_2}. \]
Summing over $n=1,\ldots,t$ yields
\[
e^{S_\zeta(t)} \one_{\Early_{m +1}[t]^c} \leq e^{S_\zeta(t) \one_{\Early_{m+1}[t]^c}} \leq e^{\sum_{n=1}^{t} \tau_n(-a,b_n)}.
\]
The Brownian exit times $\tau_n(-a,b_n)$ for $n=1,\dots, t$ are independent.  Using Lemma~\ref{lem:exittime} with $\lambda=1$, we obtain
	\[ \log \EE e^{\sum_{n=1}^{t} \tau_n(-a,b_n)} =  \sum_{n=1}^{t}  \log \EE e^{\tau_n(-a,b_n)}  \le \sum_{n=1}^{t}  \log \left( 1 + 10ab_n \right)  \le
\sum_{n=1}^{t} 10ab_n. \]
Setting $r_1 = r_0 - m - 3 - C_2$ we have
	\begin{align*} \sum_{n=1}^t b_n &\leq \int_{0}^t \frac{dn}{r_1 - \sqrt{n/\pi}} \\
							  &= \int_{0}^{\sqrt{t/\pi}} \frac{2\pi x \,dx}{r_1 -x} \\
							  &= 2\pi \int_{r_1 - \sqrt{t/\pi}}^{r_1} \frac{r_1-y}{y} dy \\
					  &\leq 2\pi r_1 \log \frac{r_1}{r_1 - \sqrt{t/\pi}} \leq 2\pi r_0 \log \frac{r_0}{C_2}.
	\end{align*}
Hence $\sum_{n=1}^t 10ab_n \leq 10 \pi \log(r_0/C_2)$.  We conclude that
	\[ \EE \left[ e^{S_\zeta(t)} \one_{\Early_{m+1}[t]^c} \right] \leq (r_0/C_2)^{10 \pi} \leq t^{40}. \qed \]
\renewcommand{\qedsymbol}{}
\end{proof}

In order to study late points in the cluster $A(t)$, we will need a variant of Lemma~\ref{lem:earlyquadraticvariation} which helps control the elapsed time $S_\zeta(t)$ when $|\zeta|$ is slightly smaller than $r_0(t)$.  In this case the quadratic variation of $M_\zeta$ is larger due to particles that accumulate on the boundary $\partial \Omega_\zeta$.

\begin{lemma}
\label{lem:latequadraticvariation}
Fix $m \geq 2C_2 + 2$ and $\ell \leq m$.   Fix $\zeta \in \Z^2$, and let $t = \pi(|\zeta| + \ell)^2$.  If $|\zeta | \geq \ell$, then
\begin{equation}
\EE \left[ e^{S_\zeta(t)} \one_{\Early_m[t]^c} \right] \leq t^{40} e^{700 m}.
\end{equation}
\end{lemma}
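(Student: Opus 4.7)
The setup differs from Lemma~\ref{lem:earlyquadraticvariation} in that $|\zeta|$ now lies \emph{inside} the approximate cluster radius $\sqrt{t/\pi}$, so particles begin exiting $\Omega_\zeta$ before time $t$ and $M_\zeta$ picks up extra quadratic variation from depositions near the pole. The strategy is to split $[0,t]$ at $n_1 := \max(0,\pi(|\zeta|-m-2C_2)^2)$ and estimate $S_\zeta$ separately on the two pieces, then multiply the resulting exponential moments using the independence of the $\widetilde B_\zeta^n$ for different $n$.

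For the \emph{interior phase} $n \le n_1$, on $\Early_m[t]^c$ Lemma~\ref{lem:earlylatealternatedefinition} forces $A(n) \subset \B_{\sqrt{n/\pi}+m} \subset \B_{|\zeta|-2C_2} \subset \Omega_\zeta$, so $A_\zeta(n)=A(n)$ stays at distance $\ge C_2$ from $\partial\Omega_\zeta$. The estimates from Lemma~\ref{Hlevel}(b) used in Lemma~\ref{lem:earlyquadraticvariation} then apply verbatim with $|\zeta|$ in place of $r_0(t)$, and the same logarithmic integration yields
\[
\EE\bigl[e^{S_\zeta(n_1)}\one_{\Early_m[t]^c}\bigr] \le t^{40}.
\]

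For the \emph{boundary phase} $n_1 < n \le t$ the cluster $A_\zeta(n)$ may reach $\partial\Omega_\zeta$ and even the pole $\zeta$, so only global control is available: by Lemma~\ref{Hsize}(a,c,d), $a_n \le \tfrac{1}{2|\zeta|}+O(|\zeta|^{-2})$ while $b_n \le H_\zeta(\zeta)-H_\zeta(0) \le 2$. Applying Lemma~\ref{lem:exittime} with $\lambda=1$ (valid once $|\zeta|$ is bounded away from $0$, so that $a_n+b_n \le 3$) gives $\log\EE e^{\tau_n(-a_n,b_n)} \le 10 a_n b_n = O(1/|\zeta|)$ per step, and the hypotheses $\ell \le m$ and $m \ge 2C_2+2$ make the number of boundary-phase steps
\[
t-n_1 = \pi(\ell+m+2C_2)(2|\zeta|+\ell-m-2C_2) = O(m|\zeta|+m^2).
\]
Summing, $\sum_{n_1 < n \le t} 10 a_n b_n \le Cm$ for a universal constant $C$; careful bookkeeping of the constants yields $C \le 700$ with room to spare. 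Combining with the Phase~I bound via independence of the $\widetilde B_\zeta^n$ produces the claimed inequality.

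The main obstacle is Phase~II. In Lemma~\ref{lem:earlyquadraticvariation} the pole sat outside the cluster and $b_n=O(1/|\zeta|)$; here particles may be deposited at $\zeta$ itself, where $H_\zeta$ is $O(1)$, so one is forced to accept a larger $b_n$. The argument survives because $a_n$ remains $O(1/|\zeta|)$ (the minimum of $H_\zeta$ on $\Omega_\zeta$ is still $1/(2|\zeta|)$), and the boundary phase lasts only $O(m|\zeta|)$ steps, so the aggregate cost is linear in $m$ rather than in $m|\zeta|$. The degenerate regime of very small $|\zeta|$ (where $a_n$ is not controlled by $1/|\zeta|$ and Lemma~\ref{lem:exittime} needs a different $\lambda$) can be dispatched by a crude trivial estimate, since $t=O(m^2)$ in that range makes $t^{40}e^{700m}$ enormous.
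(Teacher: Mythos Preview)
Your proposal is correct and follows essentially the same approach as the paper: split time into an interior phase and a boundary phase, control the interior phase by the logarithmic integration of Lemma~\ref{lem:earlyquadraticvariation}, control the boundary phase using the crude bound $b_n\le H_\zeta(\zeta)-H_\zeta(0)\le 2$ together with $a_n\approx \tfrac{1}{2\rho}$, and treat small $\rho$ separately. The only cosmetic difference is that the paper picks the split at $t_0=\lfloor\pi(\rho-4m-2C_2-1)^2\rfloor$ precisely so that $r_0(t_0)\le\rho$ and Lemma~\ref{lem:earlyquadraticvariation} can be invoked as a black box (via $\Early_m[t_0]^c\subset\Early_{m+1}[t_0]^c$), whereas you split at $n_1=\pi(\rho-m-2C_2)^2$ and redo the logarithmic integration directly on $\Early_m[t]^c$; the paper's small-$\rho$ case is $\rho\le 5m$ with $t_0=0$, matching your degenerate regime.
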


\begin{proof}
Let $\rho = |\zeta |$, and suppose first that $\rho \geq 5m$.
Let $t_0 = \floor{\pi(\rho - 4m - 2C_2 - 1)^2}$.  Then $r_0(t_0) \leq \rho$, so by Lemma~\ref{lem:earlyquadraticvariation},
	\[ \EE \left[ e^{S_\zeta(t_0)} \one_{\Early_m[t_0]^c} \right] \leq \EE \left[ e^{S_\zeta(t_0)} \one_{\Early_{m+1}[t_0]^c} \right] \leq t_0^{40} \leq t^{40}. \]
(Here we have used the trivial inclusion $ \displaystyle \Early_m[t_0]^c \subset \Early_{m+1}[t_0]^c.$)

Furthermore, because $H_\zeta\le 2$ on all of $\partial \Omega_\zeta$, we have by Lemma~\ref{lem:boundedbyexittime} for $n=t_0 +1,\ldots,t$
\[
S_\zeta(n) - S_\zeta(n-1) \leq \tau_n(-\frac{1}{2\rho},2).
\]
Hence, using the inclusion $\Early_m[t]^c \subset \Early_m[t_0]^c$,
\[
e^{S_\zeta(t)}\one_{\Early_{m}[t]^c} \leq
e^{S_\zeta(t)\one_{\Early_{m}[t_0]^c}} \leq
e^{S_\zeta(t_0)\one_{\Early_{m}[t_0]^c}} \prod_{n=t_0+1}^{t} e^{\tau_n(-\frac{1}{2\rho}, 2)}.
\]
By Lemma~\ref{lem:exittime}, we have $\log \EE e^{\tau_n(-\frac{1}{2\rho},2)} \leq \frac{10}{\rho}$ for $t_0 < n \leq t$, hence
\[
\sum_{t=t_0+1}^{t} \log \EE e^{\tau_n} \leq \frac{10}{\rho}(t  - t_0)
\le 400m.
\]
In the last inequality we have used that $t \leq \pi (\rho+ m)^2$ and $m \geq 2C_2+2$.

Since the exit times $\tau_n$ for $n=t_0+1,\dots, t$ are independent of one another and of $\mathcal{F}_{t_0}$, we obtain
\begin{align*}
\log \EE \left[ e^{S_\zeta(t)}\one_{\Early_{m}[t]^c} \right] &\leq
\log \EE \left[ e^{S_\zeta(t_0)\one_{\Early_{m}[t_0]^c}} \right] + \sum_{n=t_0+1}^{t} \log \EE e^{\tau_n} \\
	&\leq 40 \log t + 400m.
\end{align*}

It remains to consider the case $\ell \leq \rho \leq 5m$.  In this case we take $t_0 = 0$, obtaining
	\[ \log \EE e^{S_\zeta(t)} \leq \frac{10}{\rho} t = \frac{10 \pi (\rho+\ell)^2}{\rho} \leq  40 \pi \rho \leq 700m. \qed \]
\renewcommand{\qedsymbol}{}
\end{proof}

\subsection{Early points imply late points}
\label{sec:earlyimplieslate}

Now we begin the main part of the proof of Theorem~\ref{thm:logfluctuations}.
Fix $\gamma \geq 1$.
With constants $b$, $C_0$ and $c_0$ given by Lemma~\ref{lem:thickening}, and $C_2$ given by Lemma~\ref{Hsize}, we set
\[
C_3 =  \max \left\{ (\gamma + 5 + \log C_0)/c_0, \, (1000/b) C_2 \right\}.
\]
(In particular, if $T \geq 3$ and $m\geq C_3\log T$, then $C_0 e^{-c_0m} < T^{-(\gamma+5)}$.)

Recall the events
	\[ \Early_m[T]  = \Big \{ \mbox{some $z \in A(T)$ is $m$-early} \Big\} \]
and
	\[\Late_\ell[T] = \Big \{ \mbox{some $z \in \B_{\sqrt{T/\pi}-\ell}$ is $\ell$-late} \Big\} \]
defined in Section~\ref{sec:basecase}.

\begin{lemma} (Early points imply late points)
\label{lem:earlyimplieslate}
Fix $\gamma \geq 1$, $T \geq 20$ and $m \geq C_3 \log T$.  If $\ell \leq (b/1000)m$, then
	\[ \PP( \Early_m[T] \cap \Late_{\ell}[T]^c ) \leq T^{-(\gamma+1)}. \]
\end{lemma}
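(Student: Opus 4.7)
The plan is to argue by contradiction using the martingale $M_\zeta$ at a carefully chosen pole $\zeta$ just outside the expected boundary of $A(t_\zeta)$. The high-level strategy is: for each lattice pole $\zeta \in \Z^2$ with $|\zeta|$ in a feasible range, define $t_\zeta = \floor{\pi(|\zeta| - 4m - 2C_2)^2}$ (so $|\zeta| \geq r_0(t_\zeta)$) and $r = \sqrt{t_\zeta/\pi}$, and let $z_\zeta$ be the lattice point closest to the radial direction of $\zeta$ at Euclidean distance $r + m$ from $0$, so $|z_\zeta - \zeta| \approx 3m + 2C_2$. Since any $m$-early point in $A(T)$ corresponds to some such $\zeta$ modulo $O(1)$ angular choices, and since there are $O(T)$ candidate poles (up to a crude a priori outer bound), it suffices to prove $\PP(\{z_\zeta \in A(t_\zeta)\} \cap \Late_\ell[T]^c) \leq T^{-(\gamma+3)}$ for each $\zeta$.

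For the lower bound on $M_\zeta(t_\zeta) = \sum_{w \in A(t_\zeta)}(H_\zeta(w) - H_\zeta(0))$, I would first apply Lemma~\ref{lem:thickening} at time $t_\zeta$ to the point $z_\zeta$: off an event of probability $\leq C_0 e^{-c_0 m} \leq T^{-(\gamma+5)}$ (by the first condition defining $C_3$), the set $W := A(t_\zeta) \cap \B(z_\zeta, m)$ contains at least $bm^2$ points. For every $w \in W$, the bounds $|\zeta - w| \in [2m + 2C_2, 4m + 2C_2 + O(1)]$ together with the angular closeness of $W$ to the radial line through $\zeta$ and the approximation in Lemma~\ref{Hsize}(c) yield $H_\zeta(w) - H_\zeta(0) \geq c_1/m$ for an absolute constant $c_1 > 0$, so $W$ contributes at least $c_1 bm$ to $M_\zeta(t_\zeta)$. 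For the rest of the sum, decompose $A(t_\zeta) = W \sqcup \B_{r-\ell} \sqcup R$ (well-defined because $\B(z_\zeta, m) \cap \B_{r-\ell} = \emptyset$); on $\Late_\ell[T]^c$ we have $\B_{r-\ell} \subset A(t_\zeta)$, and Lemma~\ref{Hlevel}(c) gives $|\sum_{w \in \B_{r-\ell}}(H_\zeta(w) - H_\zeta(0))| \leq C_2 \log |\zeta|$. The remainder $R$ has cardinality $\leq 2\pi r\ell + O(r)$ by a volume count (using $|A(t_\zeta)| = t_\zeta$ and $\B_{r-\ell}\subset A(t_\zeta)$), and $R \subset \Omega_\zeta$, so by Lemma~\ref{Hlevel}(b) we have $H_\zeta(w) - H_\zeta(0) \geq -1/|\zeta| - O(|\zeta|^{-2})$ pointwise on $R$, whence $\sum_{w \in R}(H_\zeta(w) - H_\zeta(0)) \geq -O(\ell + 1)$. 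Using $\ell \leq bm/1000$ together with both defining conditions on $C_3$, this yields $M_\zeta(t_\zeta) \geq c_1 bm - C_2 \log|\zeta| - O(\ell) \geq c_1 bm/2$.

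To conclude, on $\Early_{m+1}[t_\zeta]^c$ Lemma~\ref{lem:earlyquadraticvariation} together with Markov's inequality gives $\PP(S_\zeta(t_\zeta) > s) \leq T^{40} e^{-s}$; pairing this with the Dambis--Dubins--Schwarz representation and Lemma~\ref{largedeviation}, then optimizing the split parameter $s \asymp c_1 bm$, gives $\PP(|M_\zeta(t_\zeta)| \geq c_1 bm/2, \Early_{m+1}[t_\zeta]^c) \leq T^{-(\gamma+5)}$, contradicting the lower bound. The case $\Early_{m+1}[t_\zeta]$ is absorbed by rerunning the argument with the ``maximally early'' point $z^* \in A(T)$ and its associated pole, applying Lemma~\ref{lem:earlyquadraticvariation} with parameter equal to that maximum earliness level (for which $\Early_{\cdot+1}^c$ is automatic); iterating over possible levels (bounded by a crude a priori outer estimate) costs at most a polynomial penalty. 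Summing over the $O(T)$ choices of $\zeta$ then yields $\PP(\Early_m[T] \cap \Late_\ell[T]^c) \leq T^{-(\gamma+1)}$. The main obstacle is the lower bound on $\sum_{w \in R}(H_\zeta - H_\zeta(0))$: the naive pointwise bound $|H_\zeta - H_\zeta(0)| = O(1/m)$ would yield only $-O(r\ell/m)$, which overwhelms the signal $c_1 bm$ when $r \gg m$; the key is to use instead the \emph{one-sided} bound $H_\zeta \geq 1/(2|\zeta|)$ from Lemma~\ref{Hlevel}(b), which reduces the noise to $O(\ell)$ and makes the argument close precisely at the threshold $\ell \leq bm/1000$.
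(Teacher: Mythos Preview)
Your proposal is essentially correct and follows the same strategy as the paper: the martingale $M_\zeta$ with pole just outside the expected boundary, the three-part decomposition of $A(t)$ into $\B_{r-\ell}$, $\B(z,m)$, and a remainder, the use of Lemma~\ref{lem:thickening} to produce a signal of order $bm$, Lemma~\ref{Hlevel}(c) for the filled ball, the one-sided bound $H_\zeta\ge 1/2\rho$ for the remainder, Lemma~\ref{lem:earlyquadraticvariation} plus Markov for the quadratic variation, and Lemma~\ref{largedeviation} to conclude. Your closing remark about why the one-sided lower bound on $H_\zeta$ is essential is exactly the point.

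The one place where the paper is cleaner is your handling of the hypothesis $\Early_{m+1}[t_\zeta]^c$ needed for Lemma~\ref{lem:earlyquadraticvariation} (and for $A(t_\zeta)\subset\Omega_\zeta$). You propose to pass to the ``maximally early'' point and iterate over earliness levels; this can be made to work, but it forces an extra union bound over the random level $m^*$ and a reparametrization of $(\zeta,t)$ at each level. The paper avoids this entirely by parametrizing not by the pole $\zeta$ but by the pair $(z,t)$ where $z$ is the \emph{first} $m$-early point and $t$ is the time it joins: on the event $Q_{z,t}$ that this is indeed the first $m$-early occurrence, no point of $A(t-1)$ is $m$-early, and $z$ itself (being adjacent to $A(t-1)$) cannot be $(m+1)$-early, so $Q_{z,t}\subset\Early_{m+1}[t]^c$ automatically. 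One then chooses $\zeta=\zeta_0(z,t)$ near $r_0(t)\,z/|z|$ and runs your argument verbatim. The events $Q_{z,t}$ partition $\Early_m[T]$, so a single union bound over $(z,t)\in\B_T\times\{1,\dots,T\}$ suffices, with no iteration over earliness levels. Apart from this bookkeeping device, the two proofs coincide.
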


In other words, with high probability either there is no $m$-early point by time~$T$, or there is a $(b/1000)m$-late point by time~$T$.

\begin{proof}
The proof consists of two main estimates.  First, if $z \in \Z^2$ is the first $m$-early point and joins the cluster at time $t$, then for a suitably chosen point $\zeta$ near $z$, the quadratic variation $S_\zeta(t)$ is small: equation \eqref{Sbound2.1} below.  Second, if in addition there are no $\ell$-late points by time $t$, then the martingale $M_\zeta(t)$ is large: equation \eqref{farfrommean} below.  To finish the proof we use the large deviation bound for Brownian exit times to show that these two events (small $S_\zeta(t)$ and large $M_\zeta(t)$) are unlikely both to occur.

Without loss of generality we assume that $\ell = (b/1000) m$.
Note that $\Early_m[T]$ is empty if $m> T^{1/2}$, so we may also
assume $m\le T^{1/2}$.
For each integer $t=1,2,\dots, T$ and each lattice point $z$,
let
	\[ Q_{z,t} = \left\{z \in A(t) - A(t-1)\} \cap \{z \in A(\pi(|z|-m)^2) \right\} \cap \Early_{m}[t-1]^c  \]
be the event that $z$ first joins the cluster at time~$t$ and $z$ is $m$-early, but no previous point is $m$-early.  Since $A(T) \subset \B_T$,
	\begin{equation} \label{eq:earlydisjointunion} \bigcup_{t \leq T} \bigcup_{z \in \B_T} Q_{z,t} =
 %\bigcup_{t \leq T} \left\{ A(t) \not \subset \B_{\sqrt{t/\pi}+m} \right\} =
 \Early_m[T]. \end{equation}

%Note that if $|z| < \sqrt{t/\pi}+m$ then $Q_{z,t}$ is empty, and otherwise $Q_{z,t} = \{z \in A(t) \} \cap \Early_m[t-1]^c$.  Since $A(t) \subset %\B_T$, we obtain by Lemma~\ref{lem:earlylatealternatedefinition},
%	\begin{align*}
%	\bigcup_{z \in \B_T} Q_{z,t} &=
%	\bigcup_{z \in \B_T - \B_{\sqrt{t/\pi}+m}} \left\{z \in A(t)\right\} \cap \Early_m[t-1]^c \\
%	&= \left\{ A(t) \not \subset \B_{\sqrt{t/\pi}+m} \right\} \cap \bigcap_{s<t} \left\{ A(s) \subset \B_{\sqrt{s/\pi}+m} \right\}.
%	\end{align*}
%Taking the union over $t =1,\ldots,T$, we obtain by Lemma~\ref{lem:earlylatealternatedefinition}
%	\begin{equation} \label{eq:earlydisjointunion} \bigcup_{t \leq T} \bigcup_{z \in \B_T} Q_{z,t} = \bigcup_{t \leq T} \left\{ A(t) \not \subset %\B_{\sqrt{t/\pi}+m} \right\} = \Early_m[T]. \end{equation}

As in Lemma \ref{lem:earlyquadraticvariation}, we write
\[
r_0 = r_0(t) = \sqrt{t/\pi} + 4m + 2C_2.
\]
Fix a lattice point $z \in \B_t$, and consider the unit square with lattice point corners
containing $r_0 z/|z|$.  Define $\zeta := \zeta_0(z,t)$ to be the
corner of that square farthest from the origin.  Then
\[
 r_0\le |\zeta_0(z,t)| < r_0 + 2.
\]

By Lemma~\ref{lem:earlyquadraticvariation}, we have for $\zeta = \zeta_0(z,t)$
	\[ \EE e^{S_\zeta(t)} \one_{\Early_{m+1}[t]^c} \leq T^{40}. \]
Hence by Markov's inequality, letting $s = (2\gamma+100) \log T$, we have
	\[ \PP( \Early_{m+1}[t]^c \cap \{ S_{\zeta}(t) > s \} ) \leq T^{-(2\gamma+100)} \EE e^{S_\zeta(t)} \one_{\Early_{m+1}[t]^c} \leq T^{-(2\gamma+60)}. \]

On the event $Q_{z,t}$, there are no $m$-early points before time $t$; moreover, $z$ is at unit distance
from some point of $A(t-1)$, so $z$ is not $(m+1)$-early.  Therefore,
\[
Q_{z,t}  \subset \Early_{m+1}[t]^c.
\]
and hence
\begin{equation}\label{Sbound2.1}
\PP(Q_{z ,t } \cap \{ S_\zeta(t ) > s \}) \leq T^{-(2\gamma+60)}.
\end{equation}

Next, we show that for $\zeta = \zeta_0(z ,t )$,
\begin{equation}\label{farfrommean}
\PP(\{M_\zeta(t ) < (b/10)m\} \cap \Late_\ell[t ]^c \cap Q_{z ,t }) < T^{-(\gamma+5)}.
\end{equation}
In other words, on the event $E := \Late_\ell[t ]^c \cap Q_{z ,t }$ it
is likely that $M_\zeta(t ) \ge (b/10) m$.

On the event $Q_{z ,t }$,
no $z'\in A(t )$ is $(m+1)$-early, so we have
$|z'| \le \sqrt{t/\pi} + m + 1 < r_0 - C_2$ for all $z' \in A(t)$.  In other words,
\[
A(t ) \subset \B_{r_0 - C_2}.
\]
Hence Lemma \ref{Hlevel}(a) implies that $A_\zeta(t )$ does
not meet $\partial \Omega_\zeta$.  In other words, on the event $Q_{z,t}$ we have
	\[ A_\zeta(t) = A(t). \]
Partition $A_\zeta(t)$ into three disjoint sets as follows.
\[
A_1 = A_\zeta(t) \cap \B_{\sqrt{t/\pi} - \ell} ; \quad
A_2 = A_\zeta(t) \cap \B (z,m); \quad
A_3 = A_\zeta(t) \setminus (A_1 \cup A_2).
\]
On the event $E$, the ball of radius
$\sqrt{t/\pi} -\ell $ is filled by $A_\zeta(t)$, so
\[
A_1 = \B_{\sqrt{t/\pi} - \ell};
\]
and since $A_\zeta(t)$ has $t$ points, $A_2 \cup A_3$ has at
most $10 \sqrt {t} \ell $ points.   In particular,
$(\# A_3)/r_0 \le 10\sqrt{\pi} \ell$ and
\[
\sum_{z'\in A_3} (H_\zeta(z') - H_\zeta(0))
\ge - (\# A_3) H_\zeta(0) \ge - 20 \ell = -(b/50) m,
\]
since $H$ is non-negative on $\Omega_\zeta$ and, by Lemma~\ref{Hsize}(c),
$H_\zeta(0)$ is close to $1/r_0$.
Furthermore, Lemma \ref{Hlevel}(c) implies
\[
\sum_{z'\in A_1} (H_\zeta(z') - H_\zeta(0))
\ge -  C_2 \log r_0 \ge -C_2 \log T > -(b/1000) m,
\]
where in the last step we have used that $C_2 < bC_3/1000$ and $C_3 \log T \leq m$.
On the other hand, since $|z|\ge m$, it follows from
Lemma~\ref{lem:thickening} that
\[
\PP(\{\#A_2 \le  bm^2\} \cap E) <  C_0 e^{-c_0 m} \le T^{-(\gamma+5)}.
\]
Next, recall that on the event $Q_{z,t}$, the point $z$ is $m$-early but not $(m+1)$-early, so that
$\sqrt{t /\pi} + m \le |z | \le \sqrt{t /\pi} + m + 1$.  Moreover,
$\zeta$ is at distance roughly $3m$ from $z $ along the ray
$0$ to $z $.  $F_\zeta(z ) \approx 1/3m$, and using Lemma \ref{Hsize}(c) we find that $H_\zeta(z') \ge 1/4m + O(1/m^2)$ for all $z'\in \B (z,m)$.
Furthermore, $H_\zeta(0) \approx 1/r_0$ and $r_0 \ge \sqrt{t /\pi} \ge 10m$,
so $H_\zeta(z') - H_\zeta(0) \ge 1/8m$.  Therefore, on the event $\{\#A_2 >  bm^2\} \cap E$, we have
\[
\sum_{z'\in A_2} (H_\zeta(z') - H_\zeta(0))
\ge (\# A_2) \frac{1}{8m} >  b m^2\frac{1}{8m} = (b/8)m.
\]
Summing the contributions to $M_\zeta(t )$ from $A_1$, $A_2$ and $A_3$, we find that on the event $\{\#A_2 >  bm^2\} \cap E$ we have $M_\zeta(t) > (\frac{b}{8} - \frac{b}{50} - \frac{b}{1000})m > (b/10)m$.  Hence
	\[ \{ M_\zeta(t) \leq (b/10)m \} \subset \{\# A_2 \leq bm^2 \} \cup E^c. \]
Intersecting with $E$, we obtain
\[
\PP (\{M_\zeta(t ) \leq (b/10)m \} \cap E) \leq \PP(  \{\# A_2 \leq bm^2 \} \cap E)  < T^{-(\gamma+5)}
\]
which proves \eqref{farfrommean}.

\old{
From \eqref{eq:earlydisjointunion} we have $\bigcup_{t=1}^{100\pi m^2} \bigcup_z Q_{z,t} = \Early_m [100 \pi m^2]$.
Since the events $Q_{z,t}$ are disjoint for $(z,t) \neq (z',t')$, we obtain from \eqref{eq:earlydisjointunion} and Lemma~\ref{lem:aprioriestimates}
\begin{equation}\label{earlytimes}
\sum_{t=1}^{100\pi m^2} \sum_{z} \PP(Q_{z,t})
= \PP(\Early_{m}[100 \pi m^2])
\le C_0 e^{-c_0m} < T^{-n-2}.
\end{equation}
}

Recall that $s = (2\gamma+100)\log T$.   Since $m \geq C_3 \log T$ we have $(b/10)m \geq s$.  Using the large deviation bound for the maximum of Brownian motion, Lemma \ref{largedeviation}, we obtain
	\begin{align} \PP \big( \left\{ S_\zeta(t) \leq s \right\} \cap \left\{ M_\zeta(t) \geq (b/10)m \right\} \big)
	&\leq \PP \big( \left\{ S_\zeta(t) \leq s \right\} \cap \left\{ B_\zeta(S_\zeta(t)) \geq s \right\} \big) \nonumber \\
	&\leq \PP \left\{ \sup_{s' \in [0,s]} B_\zeta(s') \geq s \right\} \nonumber \\
	&\leq e^{-s/2} = T^{-(\gamma+50)}. \label{eq:nolargemean}
	\end{align}

Finally, we assemble these ingredients to bound
\begin{align*}
\PP(\Early_m[T] \cap \Late_\ell[T]^c) =
 \sum_{t  = 1}^T \sum_{z \in \B_T} \PP(Q_{z ,t } \cap \Late_\ell[T]^c).
\end{align*}
Split each term in the sum on the right into three pieces, with $\zeta = \zeta_0(z ,t )$,
as indicated below and majorize using \eqref{Sbound2.1}, \eqref{farfrommean},
and  \eqref{eq:nolargemean}, respectively.  We obtain with $s=(2\gamma+100) \log T$,
\begin{align*}
\PP(Q_{z ,t } \cap \Late_\ell[T]^c)
&\le \PP (Q_{z ,t }\cap \{S_\zeta(t ) > s \}) \\
& \qquad+ \PP (Q_{z ,t } \cap \{M_\zeta(t ) < (b/10)m \} \cap \Late_\ell[t ]^c) \\
& \qquad \qquad + \PP ( \{S_\zeta(t ) \le s \} \cap \{ M_\zeta(t ) \ge (b/10) m \}) <3 T^{-(\gamma+5)}
\end{align*}
Summing over $z\in \B_T$ and $t=1,\ldots,T$, since $T \geq 20$ we conclude that
	\[ \PP(\Early_m[T] \cap \Late_\ell[T]^c) \leq  (2\pi T^2) \cdot T \cdot 3T^{-(\gamma+5)}
	%< 20T^{-(\gamma+2)
	 < T^{-(\gamma+1)}. \]
This completes the proof of Lemma~\ref{lem:earlyimplieslate}.
\end{proof}

\subsection{Late points imply early points}
\label{sec:lateimpliesearly}

Fix $\gamma \geq 1$, and let
\[
C_4 = (\gamma+3) (1500 + \frac{1}{c_0}) +  \frac{2\log C_0}{c_0} +  2C_2 + 2.
\]
where $C_2$ is the constant appearing in Lemma \ref{Hlevel}, and~$C_0$ and~$c_0$ appear in Lemma~\ref{lem:aprioriestimates}.
\begin{lemma} (Late points imply early points)
\label{lem:lateimpliesearly}
If $\ell \geq C_4\log T$ and $m \leq \ell^2/ C_4\log T$, then
	\[ \PP(\Late_\ell[T] \cap \Early_m[T]^c) \leq T^{-(\gamma+1)}. \]
\end{lemma}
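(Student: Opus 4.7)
\medskip
\noindent\textbf{Proof plan.} My plan is to mirror the argument of Lemma~\ref{lem:earlyimplieslate}, but with the martingale $M_\zeta$ now attached directly to a putative $\ell$-late point $\zeta$ (rather than to an exterior point near a putative early point). First I would apply a union bound over the $O(T)$ candidate late points $z$ with $|z| \le \sqrt{T/\pi} - \ell$, reducing the claim to
\[ \PP\bigl(\{z \notin A(t_z)\} \cap \Early_m[T]^c\bigr) \le T^{-(\gamma+3)}, \qquad t_z := \pi(|z|+\ell)^2, \]
for each such $z$. The case $|z| < \ell$ follows from the a priori estimate Lemma~\ref{lem:aprioriestimates} (a condition like $c_0 C_4 \ge \gamma+3+\log C_0$ handles it). For $|z| \ge \ell$, I would set $\zeta = z$, $\rho = |\zeta|$, $t = t_z$, and pass via an abelian-type coupling to the modified process: using common underlying grid walks, if some particle freezes at $\zeta$ in $A_\zeta$ then its trajectory (which stays in $\Omega_\zeta$ up to its freeze time) must hit $\zeta$ in $A$ while all interior lattice sites of $\Omega_\zeta$ it traverses are still occupied in $A$, forcing some particle in $A$ to end at $\zeta$. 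Hence $\{\zeta \notin A(t)\} \subseteq \{\zeta \notin A_\zeta(t)\}$.

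The deterministic core of the argument would be the bound
\[ \{\zeta \notin A_\zeta(t)\} \;\subseteq\; \bigl\{\, M_\zeta(t) \le -\pi \ell + C \log T \,\bigr\} \]
for an absolute constant $C$. To establish it, decompose $A_\zeta(t) = A_{\mathrm{int}} \sqcup A_\partial$ into the multisets of interior lattice particles and boundary particles, and set $D := (\Omega_\zeta \cap \Z^2) \setminus A_{\mathrm{int}}$ for the missing interior sites. Since $\zeta \notin A_\zeta(t)$, every boundary particle $w$ lies at a point with $H_\zeta(w) = 1/(2\rho)$, contributing exactly $-1/(2\rho) + O(\rho^{-2})$ to $M_\zeta(t)$. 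The mean value estimate Lemma~\ref{Hlevel}(c) applied to $\Omega_\zeta$ gives $\sum_{w \in \Omega_\zeta \cap \Z^2}(H_\zeta(w) - H_\zeta(0)) = O(\log \rho)$, while the lower bound $H_\zeta \ge 1/(2\rho)$ on $\Omega_\zeta$ (Lemma~\ref{Hsize}(a)) yields $\sum_{w \in D}(H_\zeta(w) - H_\zeta(0)) \ge -\#D/(2\rho) - O(\#D/\rho^2)$. Particle counting together with Lemma~\ref{Hlevel}(a) produces $\#A_\partial - \#D = t - \#(\Omega_\zeta \cap \Z^2) = 2\pi\rho\ell + \pi\ell^2 + O(\rho)$; assembling these pieces gives the displayed inequality (with $\ell \ge C_4 \log T$ absorbing $C \log T$ into $\pi\ell/2$).

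To conclude, I would combine the deterministic bound with Lemma~\ref{lem:latequadraticvariation} exactly as in the proof of Lemma~\ref{lem:earlyimplieslate}. Setting $s := 40 \log t + 700 m + (\gamma+3) \log T$, Markov's inequality gives $\PP(\{S_\zeta(t) > s\} \cap \Early_m[t]^c) \le t^{40} e^{700m - s} \le T^{-(\gamma+3)}$. On the complementary event $\{S_\zeta(t) \le s\}$, the Dambis--Dubins--Schwarz representation $M_\zeta(t) = B_\zeta(S_\zeta(t))$ and Lemma~\ref{largedeviation} (applied to $-B_\zeta$) yield
\[ \PP\bigl(\{S_\zeta(t) \le s\} \cap \{M_\zeta(t) \le -\pi \ell/2\}\bigr) \le e^{-(\pi\ell/2)^2/(2s)}. \]
The hypotheses $\ell \ge C_4 \log T$ and $m \le \ell^2/(C_4 \log T)$ force $(\log T)^2 \le \ell^2/C_4^2$ and $700 m \log T \le 700 \ell^2/C_4$; choosing $C_4$ linear in $\gamma$ as in the paper's explicit formula then makes $(\pi\ell/2)^2/(2s) \ge (\gamma+3)\log T$, so the Gaussian tail is also $\le T^{-(\gamma+3)}$. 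Summing the three failure modes over the $O(T)$ candidate points yields the desired $T^{-(\gamma+1)}$.

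The hardest step will be the deterministic martingale bound. It relies crucially on the sharp mean value property for the precise set $\Omega_\zeta$ (rather than just a disk; the remark after Lemma~\ref{Hlevel} flags exactly this need), and on the identity $H_\zeta \equiv 1/(2\rho)$ on $\partial\Omega_\zeta \setminus \{\zeta\}$, which forces every single boundary particle to contribute with a uniformly negative sign under the hypothesis that no particle sits at $\zeta$. The abelian coupling in the first paragraph is a secondary technical point but must be set up carefully so that it genuinely reduces $\{\zeta \notin A(t)\}$ to an event entirely about the modified process $A_\zeta$.
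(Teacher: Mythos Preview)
Your proposal is correct and follows essentially the same route as the paper's proof. Both arguments place the pole at $\zeta$ equal to the putative late point, show deterministically that on $\{\zeta \notin A_\zeta(T_1)\}$ the martingale satisfies $M_\zeta(T_1) \le -c\ell$, control $S_\zeta(T_1)$ on $\Early_m[T_1]^c$ via Lemma~\ref{lem:latequadraticvariation} and Markov's inequality, and finish with the Gaussian tail bound and a union bound over $\zeta$ (with Lemma~\ref{lem:aprioriestimates} handling $|\zeta|<\ell$).

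Two minor remarks. First, the paper derives the deterministic bound slightly more cleanly: rather than tracking the missing set $D$ and the boundary count $\#A_\partial$ separately, it observes that on $\{\zeta\notin A_\zeta(T_1)\}$ every boundary particle has value $1/(2\rho)<H_\zeta(w)$ for any interior $w$, so $M_\zeta(T_1)$ is maximized when $D=\emptyset$; this sidesteps bounding $\#D$. Your decomposition reaches the same conclusion since the $D$-terms and the extra $A_\partial$-terms combine into exactly $-(t-\#(\Omega_\zeta\cap\Z^2))(H_\zeta(0)-1/(2\rho))$. Second, you are right to flag the reduction $\{\zeta\notin A(t)\}\subseteq\{\zeta\notin A_\zeta(t)\}$, which the paper uses without comment. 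Your sketch of it is essentially the abelian argument: pause each particle at its first exit from $\Omega_\zeta$ (this reproduces the interior of $A_\zeta$), then release; releasing any particle paused at $\zeta$ immediately occupies $\zeta$, so $\zeta\in A_\zeta(t)\Rightarrow\zeta\in A(t)$. Note that the intermediate claim ``all interior lattice sites it traverses are still occupied in $A$'' itself relies on $A_\zeta(k)\cap\Omega_\zeta\cap\Z^2\subseteq A(k)$, which follows from the same pause-and-release abelian property.
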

In other words, with high probability, either there is
no $\ell$-late point by time~$T$, or there is an
$\ell^2 /C_4 \log T$-early point by time~$T$.

\begin{proof}

Without loss of generality we take $m=  \ell^2/ C_4\log T$.  Note that since $\ell \geq C_4 \log T$ this ensures $m \geq \ell$.
Let $\zeta\in \Z^2$ be such that $|\zeta| \leq \sqrt{T/\pi} - \ell$, and set
\[
\rho  = |\zeta |, \quad   T_1  = \pi(\rho + \ell)^2.
\]
Note that $T_1 \leq T$.  Denote by $L[\zeta] = \{\zeta \not\in A(T_1) \} $ the event that $\zeta $ is $\ell$-late.  Then
\[
\Late_\ell[T] = \bigcup_{|\zeta| \leq \sqrt{T/\pi}-\ell} L[\zeta].
\]

We will use the martingale $M_\zeta (t)$ with pole at $\zeta $.
We first show that on the event~$L[\zeta ]$,
\begin{equation}\label{M0bound}
M_\zeta (T_1) \leq -\ell.
\end{equation}
To prove this, note first that by the maximum principle, the smallest value
of~$H_\zeta$ on~$\Omega_\zeta$ is $1/2\rho $ attained on the boundary $\partial \Omega_\zeta $.  This
boundary value is achieved everywhere on $\partial \Omega_\zeta$ except at the point $\zeta $, at which $H_\zeta$ has
a much larger value.  On the event $L[\zeta]$, no particle reaches $\zeta $ by time $T_1 $, so
the values of $H_\zeta$ at all boundary points reached at time $T_1$
are $1/2\rho $, which is smaller than the interior values.  Therefore,
$M_\zeta (T_1)$ is maximized if all the sites of $\Omega_\zeta $ are occupied, i.e.,
$\Omega_\zeta \subset A_\zeta(T_1)$.  By Lemma~\ref{Hlevel}(a), we have $\Omega_\zeta \subset \B_{\rho+C_2}$, so
	\[ T_1 - \# \Omega_\zeta \geq \pi (\rho+\ell)^2 - \pi(\rho+C_2+1)^2. \]
Since $\ell \geq C_4 \log T$, the right side is at least $\pi \rho \ell$.  Hence on the event~$L[\zeta]$, we have
	\begin{equation}
	\label{eq:boundaryplusinterior}
	M_\zeta(T_1) \leq
	(\pi \rho \ell )\left(\frac{1}{2\rho } - H_\zeta(0)\right)
		+ \sum_{z\in \Omega_\zeta } (H_\zeta(z) - H_\zeta(0)).
	\end{equation}
By Lemma~\ref{Hsize}(c), we have $\frac{1}{2\rho} - H_\zeta(0) = -\frac{1}{2\rho}  + O(\frac{1}{\rho^2})$, which shows that the first term on the right side of \eqref{eq:boundaryplusinterior} is less than $-3\ell/2$.
By Lemma \ref{Hlevel}(c), the second term is at most $C_2 \log \rho \leq C_2 \log T \leq \ell/2$.
This concludes the proof of \eqref{M0bound}.

Now suppose that $|\zeta| \geq \ell$.  Recall that $m \geq \ell \geq C_4 \log T$ and $C_4 \geq 1$, so $e^m \geq T$.  By Lemma~\ref{lem:latequadraticvariation},
\begin{equation}\label{Sbound3}
\EE e^{S_\zeta(T_1)} \one_{\Early_m[T_1 ]^c} \leq T^{40} e^{700 m} \leq e^{740m}.
\end{equation}
Letting $s=750m$, we have by Markov's inequality
\[
\PP(\{ S_\zeta(T_1) > s \} \cap \Early_m[T_1]^c )
\leq e^{-s} \EE e^{S_\zeta(T_1)} \one_{\Early_m[T_1]^c}
\leq e^{-10m}.
\]
Now by \eqref{M0bound},
	\begin{align*} \PP ( L[\zeta] \cap \Early_m[T]^c )
	&\leq \PP( \Early_m[T]^c \cap \{S_\zeta(T_1)>s \} )
		+ \PP ( \{S_\zeta(T_1) \leq s \} \cap L[\zeta] ) \\
	&\leq e^{-10m} + \PP \big\{S_\zeta(T_1) \leq s, \, M_\zeta(T_1) \leq -\ell \big\}.
	\end{align*}
We use Lemma~\ref{largedeviation} to bound the second term.  Since $M_\zeta(t) = B_\zeta(S_\zeta(t))$, where $B_\zeta$ is a standard Brownian motion with $B_\zeta(0)=0$, we have
	\begin{align*}
	\PP \left\{S_\zeta(T_1) \leq s, \, M_\zeta(T_1) \leq -\ell \right\}
	\leq \PP \left\{ \inf_{s' \in [0,s]} B_\zeta(s') \leq -\ell \right\}
	\leq e^{- \ell^2 /2s }.
	\end{align*}
We conclude that for all $\zeta \in \Z^2$ such that $ |\zeta | \leq \sqrt{T/\pi}-\ell$,
	\begin{equation*} \PP ( \Early_m[T]^c \cap L[\zeta] ) \leq e^{-10m} + e^{-\ell^2 / 1500m}. \end{equation*}
Since $m \geq \ell \geq (\gamma+3)\log T$ and $\ell^2 / 1500m \geq (\gamma+3) \log T$ by hypothesis, each term on the right side is at most $T^{-(\gamma+3)}$.  Hence for $\ell \leq |\zeta| \leq \sqrt{T/\pi}-\ell$, we have
	\begin{equation} \label{eq:sum.me} \PP ( \Early_m[T]^c \cap L[\zeta] ) \leq 2T^{-(\gamma+3)}. \end{equation}

To take care of the case $|\zeta| < \ell$, we use Lemma~\ref{lem:aprioriestimates}, which gives
\begin{equation*}\label{smallzeta}
\PP\left(\bigcup_{|\zeta| < \ell} L[\zeta]\right)
=  \PP(\Late_\ell [4\pi \ell^2])
\le C_0 e^{-c_0\ell} < T^{-(\gamma+3)}.
\end{equation*}

Summing \eqref{eq:sum.me} over $\zeta \in \B_{\sqrt{T/\pi}-\ell} - \B_\ell$ yields
	\begin{align*}
	\PP(\Early_m[T]^c \cap \Late_\ell[T])
	&\leq \PP \left(\bigcup_{|\zeta| < \ell} L[\zeta]\right) + \sum_{\ell \leq |\zeta | \leq \sqrt{T/\pi}-\ell} \PP(\Early_m[T]^c \cap L[\zeta]) \\
	&< T^{-(\gamma+3)} + T^{-(\gamma+2)}.
	\end{align*}
This completes the proof of Lemma~\ref{lem:lateimpliesearly}.
\end{proof}

The proof of the main result from
Lemmas~\ref{lem:earlyimplieslate} and~\ref{lem:lateimpliesearly}
is a routine iteration.  We use the lemmas in their contrapositive direction
(no early point implies no late point and no late point implies no early point)
for progressively smaller values of $m$ and $\ell$, starting from \eqref{nolate}.

Fix $\gamma \geq 1$ and $T\geq 20$. Let $\ell_0 = \sqrt{T/100\pi}$.  Then \eqref{nolate} imples
	\[
	\PP(\Late_{\ell_0}[T]) \leq C_0 e^{-c_0 \ell_0}.
	\]
If $T$ is sufficiently large, say $T>T_0(\gamma)$, then the right side is less than $T^{-(\gamma+1)}$.

Now define
\[
m_0 = (1000/b)\ell_0.
\]
Lemma \ref{lem:earlyimplieslate} implies that if $m_0 \geq C_3 \log T$, then
\[
\PP(\Early_{m_0}[T]\cap \Late_{\ell_0}[T]^c)  \leq T^{-(\gamma+1)}.
\]
so that
\[
\PP(\Early_{m_0}[T])  < 2 T^{-(\gamma+1)}.
\]
For $k\geq 1$, define $\ell_k$ and $m_k$ recursively by
	\begin{align*} \ell_{k} &= \sqrt{(C_4 \log T)m_{k-1}} \\
	 m_k &= (1000/b) \ell_k.
	\end{align*}

By Lemma~\ref{lem:lateimpliesearly}, if $\ell_1 \geq C_4 \log T$, then
	\[ \PP (\Late_{\ell_1}[T]\cap \Early_{m_0}[T]^c ) \leq T^{-(\gamma+1)} \]
hence
	\[ \PP (\Late_{\ell_1}[T]) < 3 T^{-(\gamma+1)}. \]
	
By induction on~$k$, we find that
\[
\PP(\Late_{\ell_k}[T]) <  (2k+1) T^{-(\gamma+1)}
\]
and
\[
\PP(\Early_{m_k}[T])  < (2k+2) T^{-(\gamma+1)}
\]
provided $m_k \ge C_3\log T$ and $\ell_k \ge C_4\log T$.

The recursion gives
\[
\ell_{k}
= \alpha^{1 - 2^{-k}} \ell_0^{1/2^{k}}
\]
and
\[
m_k =  \alpha^{1 - 2^{-k}} m_0^{1/2^{k}}.
\]
where $\alpha = (1000/b) C_4\log T$.

Let  $C_5 = (1000/b) \max(C_3,C_4)$.  Then
\[
\ell_{k}, m_k \leq (C_5 \log T) T^{1/2^k}.
\]
For $k > (\log\log T)/\log 2$, we have
\[
T^{1/2^k} < 3.
\]
Therefore, if we choose $k$ largest so that $\ell_k \ge 3C_5 \log T$,
then
\[
k \le (\log\log T)/\log 2.
\]
Thus, in our final step we may use
\begin{align*}
\ell := \max(\ell_{k+1}, C_4 \log T) \le 3 C_5 \log T \\
m := \max(m_{k+1}, C_4 \log T) \le 3 C_5 \log T
\end{align*}
to obtain
\begin{align*}
\PP(\Late_{\ell}[T]) <  (2k + 3) T^{-(\gamma+1)} < \frac12 T^{-\gamma} \\
\PP(\Early_{m}[T]) <  (2k + 4) T^{-(\gamma+1)} < \frac12 T^{-\gamma}
\end{align*}
for all sufficiently large~$T$.

Let $r=\sqrt{T/\pi}$ and $a = 10C_5 \log r$.  Then $a > 3 C_5 \log T \geq \ell, m$.  By Lemma~\ref{lem:earlylatealternatedefinition}, we conclude that
	\begin{align*} \PP \left \{ \B_{r- a \log r} \subset A(\pi r^2) \subset \B_{r+ a\log r} \right \}^c
		&\leq \PP \left \{ \B_{r - \ell} \not\subset A(T) \right\} + \PP \left \{ A(T) \not \subset \B_{r+ m} \right \}  \\
		&\leq \PP (\Late_\ell[T]) +  \PP(\Early_m[T]) \\
		&< T^{-\gamma} < r^{-\gamma}.
	\end{align*}
This completes the proof of Theorem \ref{thm:logfluctuations}.

\section{Concluding Remarks} \label{s.conclusion}

We mention here a few open questions and possible extensions of our results, along with a brief overview of our forthcoming sequels.

\subsection*{Gaussian free field}

Let $\Lambda = [-n,n]^d \subset \mathbb Z^d$ and let $h$ be an instance of the discrete Gaussian free field on $\Lambda$ with zero boundary conditions.  That is \begin{enumerate}
\item write $$E(f) = \sum_{x,y \in \Lambda, |x-y|=1} |f(x)-f(y)|^2,$$
\item let $H$ be the $(2n-1)^d$-dimensional vector space of real-valued functions on $\Lambda$ that vanish on the boundary $\partial \Lambda$,
\item and choose $h$ from the measure $e^{-E(h)/2}dh$ (normalized to be a probability measure), where $dh$ is Lebesgue measure on $H$.
\end{enumerate}

\begin{table}
\begin{center}
\begin{tabular} {|l|l|l|l|}
\hline
 &  $d=1$ & $d=2$ & $d \geq 3$ \\[3pt]
\hline
Order of $h(x)$ at typical $x \in \Lambda$  & $\sqrt{n}$ & $ \sqrt{\log n}$ &  $1$ \\[3pt]
\hline
Order of $\max_{x \in \Lambda} |h(x)|$ & $\sqrt{n}$ & $\log n$ & $\sqrt{\log n}$\\[3pt]
\hline
Order of  $|\Lambda|^{-1} \sum_{x \in \Lambda} \phi(x/n) h(x)$  & $\sqrt{n}$ & $1$ & $n^{1-d/2}$ \\[3pt]
\hline
\end{tabular}
\end{center}
\caption{\label{table:gfforder} Order of various kinds of fluctuations for the discrete Gaussian free field in~$\Z^d$.  In the bottom row, $\phi: [-1,1]^d \to \R$ is a smooth test function.
}
\end{table}

Table~\ref{table:gfforder} gives the orders of various observables associated to~$h$.  (See \cite{BZ10} for recent results on the maximum of $h$ in two dimensions, and \cite{Sh} for a general survey on the GFF.)  Perhaps surprisingly, the orders in the last two rows do not change if we replace $\Lambda$ with a fixed co-dimension one subset of $\Lambda$.  This shows that weighted averages of $h$ are remarkably well concentrated (and become more concentrated in higher dimensions).
Appropriately normalized, they scale to Gaussian random variables with an asymptotic variance that depends $\phi$.

Next, if we let \[ L(z) = \sqrt{\frac{\min \{n | z \in A(n) \}}{\pi}} - |z| \] represent the ``lateness'' of the vertex $z$, then we expect $L$ and its higher dimensional analogues to behave similarly to $h$ in all dimensions.  (This is trivial for dimension $d=1$.) Our sequel paper, \emph{Internal DLA and the Gaussian free field} \cite{JLS11} addresses the bottom row (for certain test functions $\phi$) and shows that $L$ has a variant of the continuum Gaussian free field as a scaling limit.

\subsection*{Lower bounds on fluctuations}
The present paper established the upper bound of $\log n$ for the fluctuations of $L$ corresponding to the middle square of Table~\ref{table:gfforder}.  Simulations~\cite{MM,FL} indicate that fluctuations of this order are really present, and proving this is a very natural open problem.  One possible approach would be to use the methods of this paper to show that for small enough~$c$, the {\em expected} number of aberrations of size $c \log n$ (or $c\sqrt {\log n}$ in dimension $d \geq 3$) in $A(n)$ grows like a {\em positive} power of~$n$ --- and then using a second moment estimate to show that infinitely many such aberrations occur almost surely.  The main new ingredient required would be to control the correlations between fluctuations in two different directions.

\subsection*{Higher dimensions}
The $d\geq 3$ sequel to this paper \cite{JLS10} uses variants of the arguments presented here.  The function $H_\zeta$ is replaced by a discrete Green's function for a ball in $\Z^d$ (with zero boundary conditions).  The obvious analogue of $H_\zeta$ (constructed from differences of $\Z^d$ Green's functions) does not have spherical level surfaces.  This causes some extra complication in the estimates involved in Lemmas~\ref{Hsize} and~\ref{Hlevel}.  It becomes necessary to estimate the discrete Green's function on the ball directly, instead of building on known estimates for the $\Z^d$ Green's function.  The argument requires a few other modifications as well (for example, taking $\zeta$ to be a point inside the ball, instead of on the boundary).

\subsection*{Fluctuations in a fixed direction}
The bound of order $\log r$ in Theorem~\ref{thm:logfluctuations} measures the ``fluctuations in the worst direction.''  Another interesting quantity is the fluctuation in a specific direction, say along positive $x$-axis, as measured by the differences
	\[ \max \{x>0 \mid (x,0) \in A(\pi r^2)\} - r \]
and	
	\[ r - \min \{x>0 \mid (x,0) \notin A(\pi r^2) \}. \]
Based on the analogy with the discrete Gaussian free field, we believe that these random variables have standard deviation of order $\sqrt{\log r}$ in two dimensions, and that their higher dimensional analogues have $O(1)$ standard deviation.  These orders match those in the top line of Table~\ref{table:gfforder}.

In numerical experiments on a related quantity, Meakin and Deutch~\cite{MD} found order $\sqrt{\log r}$ fluctuations in dimension~$2$ and $O(1)$ fluctuations in dimension~$3$.

We have not been able to adapt our bootstrapping argument to prove bounds of this type on fluctuations in a fixed direction.  An $m$-early point where $m = \sqrt{\log r}$ contributes only $\sqrt{\log r}$ to the martingale $M_\zeta$, which is swamped by the logarithmic error term in the discrete mean value property, Lemma~\ref{Hlevel}(c).

\subsection*{Derandomized aggregation}
James Propp around 2001 proposed a growth process called \emph{rotor-router aggregation}, in which particles started at the origin in $\Z^2$ perform deterministic analogues of random walks until reaching an unoccupied site.  Large-scale simulations show that the aggregates produced in this way are substantially smoother than IDLA, with even smaller fluctuations from circularity~\cite{FL}. Indeed, part of Propp's motivation was to study how much of the random fluctuation can be removed by passing to the deterministic analogue.  The data indicate that the deviation from circularity in rotor-router aggregation may even be bounded independent of the size of the cluster; that is, $\B_{r-C} \subset A(\pi r^2) \subset \B_{r+C}$ for an absolute constant $C$.
Yet the current best upper bounds (order $\log r$ on the inside and $r^{1/2} \log r$ on the outside, \cite{LP09}) remain distant from this goal.  In trying to adapt our methods to rotor-router aggregation, we can define a deterministic analogue of the martingale $M_\zeta(t)$ by summing the function $H_\zeta$ over the rotor-router aggregate $A(t)$.  At this point an interesting question arises: what should take the place of the martingale representation theorem for this deterministic process?

\subsection*{External DLA}

External DLA, defined by Witten and Sander~\cite{WS}, produces intricate branching structures instead of a smooth spherical surface.  Typically it is defined with particles starting from infinity, but there is also a version in which particles start at the origin.  In this version, the initial occupied set $A(0)$ is the set of sites outside some extremely large origin-centered ball.  At time $n$ we start a random walk at the origin and stop when it first reaches a site $z_n$ {\em adjacent to} $A(n)$.  Then we set $A(n+1) = A(n) \cup \{z_n\}$.

Both this paper and the sequel (about the Gaussian free field) use a similar technique: namely, they both use geometric arguments to control the quadratic variation of continuous martingales obtained by summing the values of discrete harmonic functions over sets of particle locations.
One intriguing aspect of this work is that all of the martingales we utilize are still martingales for external DLA (grown from the origin).  Indeed, Kesten's upper bound on the growth rate of external DLA~\cite{Kesten} uses a closely related martingale.

Because the geometry of external DLA is much more intricate, however, it becomes more difficult to control the quadratic variation of the martingales.  In our argument, the interlinked inner and outer estimates --- indeed, even the notions of early and late points --- rely on the fact that the cluster fluctuates around a predictable asymptotic shape (the disk).  In external DLA, on the other hand, even the large-scale geometry is random.

\appendix
\section{Appendix: Proof of Lemmas~\ref{lem:thickening} and~\ref{lem:aprioriestimates}}

\subsection{Bounds on the probability of thin tentacles in all dimensions}
\label{sec:tentacle}

This appendix is devoted to proving the following lemma, which strengthens Lemma~\ref{lem:thickening}.  Since we anticipate using this result to bound the fluctuations of internal DLA in higher dimensions as well, we treat all dimensions $d \geq 2$.

\begin{lemmaA}
\label{lem:strongnotentacle}
In dimension $d=2$, there are positive absolute constants $b$, $C_0$, and $c$ such that
for all real numbers $m >0$ and all $z \in \Z^2$ with $0 \not \in \B(z,m)$,
\begin{equation} \label{e.d2tentaclebound}
\PP \left\{ z \in A(n),~ \# (A(n) \cap \B(z,m)) \leq b m^2 \right\} \leq
C_0 e^{-c m^2/\log m}.
\end{equation}

For each dimension $d \geq 3$, there are positive absolute constants $b$, $C_0$, and $c$ (depending only on $d$) such that
for all real numbers $m >0$ and all $z \in \Z^2$ with $0 \not \in \B(z,m)$,
\begin{equation} \label{e.d3tentaclebound}
\PP \left\{ z \in A(n),~ \# (A(n) \cap \B(z,m)) \leq b m^d \right\} \leq
C_0 e^{-c m^2}.
\end{equation}
\end{lemmaA}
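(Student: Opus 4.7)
The plan is to bound the bad event by a Chernoff-style argument: if only $bm^d$ sites of $\B(z,m)$ end up in $A(n)$, then among the many IDLA walks that enter $\B(z,2m)$ by time $n$, overwhelmingly few are absorbed inside $\B(z,m)$, yet each such walk has a constant-order chance of being captured there. The strength of the exponent reflects two inputs: a capture probability that is $\Theta(1)$ in $d\geq 3$ but only $\Theta(1/\log m)$ in $d=2$, and a lower bound of order $m^2$ on the number of walks entering $\B(z,2m)$.

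The argument has three steps. (i) \emph{Capture lemma}: for $U\subset\B(z,m)$ with $|U|\leq b_0m^d$ and $b_0$ small, a simple random walk started anywhere on $\partial\B(z,2m)$ hits $\B(z,m)\setminus U$ before exiting $\B(z,3m)$ with probability at least $c$ in $d\geq 3$ and at least $c/\log m$ in $d=2$. This follows from Green's function estimates on $\B(z,3m)$: the expected number of \emph{distinct} lattice sites visited in $\B(z,m)$ by such a walk is $\Theta(m^2)$ in $d\geq 3$ and $\Theta(m^2/\log m)$ in $d=2$, so because $U$ covers only a small fraction of $\B(z,m)$ the walk is likely to hit $\B(z,m)\setminus U$ at least once. (ii) \emph{Walk count}: on $\{z\in A(n)\}$, at least $N\geq cm^2$ IDLA walks enter $\B(z,2m)$ by time $n$. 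Since $|z|\geq m$, the annulus $\B(z,2m)\setminus\B(z,m)$ separates the origin from every site of $\B(z,m)$; by the a priori cluster-shape estimate of Lemma~\ref{lem:aprioriestimates}, the cluster is thick enough near $z$ that $\Omega(m^2)$ walks must have entered. (iii) \emph{Chernoff}: using the Diaconis--Fulton abelian property to reorder walks, process those entering $\B(z,2m)$ one at a time; by the strong Markov property each is a fresh simple random walk whose capture probability is bounded as in~(i). A multiplicative Chernoff bound over the $\Omega(m^2)$ trials then gives probability at most $\exp(-c''m^2)$ in $d\geq 3$ and $\exp(-c''m^2/\log m)$ in $d=2$.

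The main obstacle is the conditional-independence bookkeeping in Step~(iii): as walks are processed, the occupied set inside $\B(z,m)$ grows, and one must verify that the capture-probability hypothesis $|U|\leq b_0m^d$ remains valid throughout the stretch of the process to which Chernoff is applied. The clean resolution is to stop the reordering at the first moment $|U|$ exceeds $b_0m^d$ and apply a stopping-time Chernoff bound that counts successes only while the capture probability is still at its lower bound. A secondary concern is the logarithmic factor in $d=2$: the range estimate in~(i) must be sharp (controlling distinct visits rather than total occupation time, which would miss the recurrence-induced loss), and the Chernoff step must not incur an additional $\log m$ factor --- both are standard once the capture probability is correctly calibrated against the walk's range inside $\B(z,3m)$.
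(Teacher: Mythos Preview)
Your Step~(ii) is the genuine gap, and it cannot be repaired within your framework.  You invoke Lemma~\ref{lem:aprioriestimates} to assert that $\Omega(m^2)$ IDLA walks enter $\B(z,2m)$ on $\{z\in A(n)\}$, but that lemma only bounds the probability of $\ell$-late points by $C_0e^{-c_0\ell}$.  Even if you take $\ell$ comparable to~$m$ and intersect with the complement of $\Late_\ell$, the resulting error term $C_0e^{-c_0 m}$ already swamps the target $e^{-cm^2}$ or $e^{-cm^2/\log m}$.  Your scheme therefore recovers at best the weaker $e^{-c_0 m}$ bound of Lemma~\ref{lem:thickening}, not Lemma~A.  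More to the point, on the very thin-tentacle event you are trying to exclude there is no reason $\Omega(m^2)$ particles ever enter $\B(z,2m)$: a path-like tentacle of length $m$ needs only $O(m)$ particles to cross each shell, and the argument collapses to $O(m)$ trials.

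Your Step~(i) calibration is also off.  In $d=2$, a walk from $\partial\B(z,2m)$ hits $\B(z,m)\setminus U$ before exiting $\B(z,3m)$ with probability bounded below by a \emph{constant} when $|U|\leq b_0m^2$ (harmonic measure of the inner ball is constant, and $U$ is too small to screen its complement).  The $\log m$ loss in the $d=2$ exponent does not come from recurrence or capture probability.  In the paper's proof it arises from an entirely different source: after partitioning the box into concentric shells $S_j$ and fixing the profile $a_j=\#(A\cap S_j)$, the paper builds an adapted ``tower of cubes'' with side lengths $\beta_1,\ldots,\beta_k$ and counts a \emph{trial} each time any particle crosses one of the key shells $S_{\alpha_i}$.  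The crucial amplification is that a single particle reaching the $i$th key shell has already produced $i$ trials.  The total trial count $T=\sum_j a_j\gamma_j$ is then bounded below via H\"older (Lemma~\ref{lem:trialnumbound}) by $cm^2$ for $d\geq3$ and $cm^2/\log m$ for $d=2$; the $\log m$ is the price of the harmonic-series sum $\sum i^{-1}$ in the H\"older dual.  Each trial fails (particle is absorbed before reaching the next key shell) with uniformly positive probability, and Cram\'er finishes.  The multi-scale shell structure is precisely what manufactures enough trials out of a thin tentacle, and it is absent from your single-annulus setup.
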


Note that even the thinnest possible tentacle, a path of length~$m$, can be formed using only $m(m+1)/2$ random walk steps: for $j=1,\ldots,m$, the $j$-th particle to enter $\B(z,m)$ takes $j$ steps along the path and occupies the next site on the path.  This shows that the exponent~$m^2$ on the right side of \eqref{e.d3tentaclebound} is best possible.

Instead of working with the ball $\B(z,m)$ it will be slightly more convenient to use the box $\tilde \B(z,m) := z + [-m,m]^d \subset \mathbb Z^d$.  Since the volume of a ball is (up to a $d$ dependent constant factor) the same as that of a box of the same radius, Lemma~A holds as stated if and only if it holds with $\B(z,m)$ replaced by $\tilde \B(z,m)$.

Fix $z \in \Z^d$, and for $0 \leq j \leq m$ let $S_j$ be the set of points $x = (x_1, \ldots, x_d) \in \Z^d$ such that
	\[ \max_{i=1}^d  |x_i-z_i| = m-j. \]  The sets $S_j$ partition $\tilde \B(z,m)$ into $m+1$ concentric shells, indexed from the outside inward.  Since $\Z^2 \backslash S_j$ is disconnected, at least one site in shell $S_j$ must join the cluster $A(n)$ before any site in $S_{j-1}$ can join.

It is enough to show that \eqref{e.d2tentaclebound} and \eqref{e.d3tentaclebound} hold if we replace $A(n)$ with the set~$A$, where $A = A(n_z) \cap \tilde \B(z,m)$, and $n_z$ is defined to be the smallest $n$ for which $z \in A(n)$.  Now write $a_j := \#(A \cap S_j)$ for $j \in \{0,1,\ldots,m \}$.  The cluster $A(n_z)$ intersects each shell $S_j$, so $a_j \geq 1$ for all $j=0,1,\ldots,m$.

Given $m$, the number of possibilities for the sequence $a_j$ is at most $((2m+1)^d)^m$, which is much smaller than the reciprocal of  the right side of \eqref{e.d2tentaclebound} and \eqref{e.d3tentaclebound}.  It thus suffices enough to show that, for each {\em fixed} sequence $\aa = (a_j)_{j=0}^m$ with $a_j \geq 1$ and $\sum_{j=0}^m a_j \leq bm^d$, \eqref{e.d2tentaclebound} and \eqref{e.d3tentaclebound} hold when the left side is replaced by the probability of the event
	\[ \mathcal{T}_\aa := \bigcap_{j=0}^m \left\{\#(A \cap S_j) = a_j \right\}. \]
We emphasize that $C_0$ and $c$ are still absolute constants; in particular, they do not depend on the sequence~$\aa$ or on~$m$.  We will henceforth assume that such a sequence~$\aa$ has been fixed, and bound the probability $\PP(\mathcal{T}_\aa)$.

\begin {figure}[htbp]
\begin {center}
\includegraphics [width=5.2in]{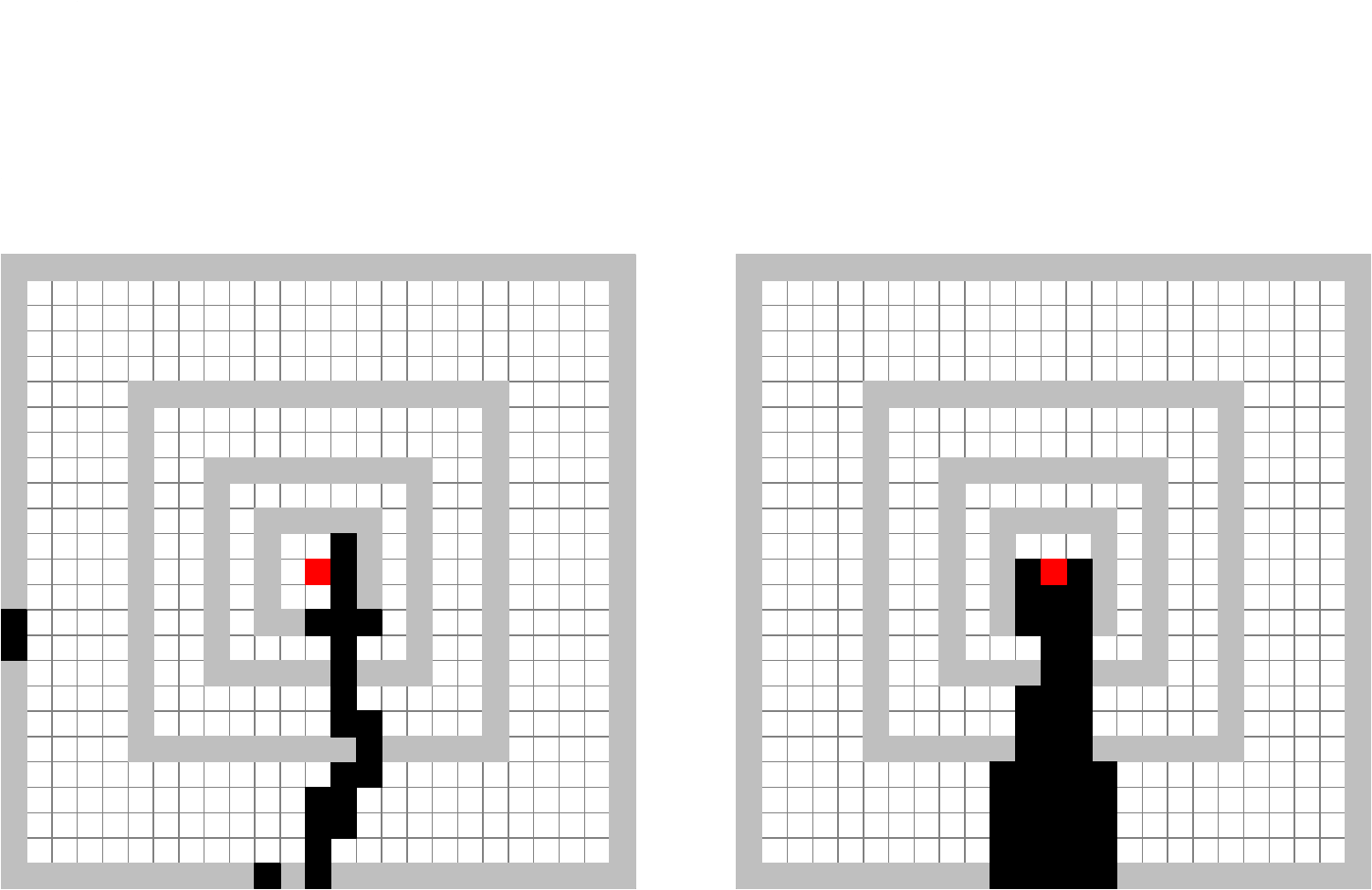}
\caption {\label{shells} On the left: a possible value for $A$ (black, with $z$ in red at the center).  Here $m=12$ and the sequence $a_0, a_1, \ldots, a_{12}$ (giving number of black squares in the square shell $S_j$, indexed from outside inward) is given by $4, 1, 2, 2, 2, 1, 2, 1, 1, 1, 3, 3, 1$.  Taking $c'=1/2$, the corresponding sequence $b_0, \ldots, b_{12}$ is given by $5, 5, 5, 5, 5, 3, 3, 3, 2, 2, 3, 3, 3$, and a corresponding tower of squares is illustrated on the right.  The sizes of the squares are $\beta_1 = 5$, $\beta_2 = 3$, $\beta_3 = 2$, $\beta_4 = 3$.  The grey shells indicate the $S_{\alpha_i}$, where $\alpha_0 = 0$, $\alpha_1 = 5$, $\alpha_2 = 8$, and $\alpha_3 = 10$.  Since $c'=1/2$, the number of black squares in the ``annulus'' formed by the union of the $S_j$ crossing a given square on the right (other than the innermost square) is always at least twice the number black of squares in the corresponding annulus on the left.
}
\end {center}
\end {figure}

Now we will construct a related sequence $b_j$ indexed by $[0,m]$ with two nice properties:

\begin{enumerate}
\item $b_j$ is roughly the kind of sequence $a_j$ that one would see if $A$ were a tower of cubes.  More precisely, let $\beta_1,\ldots,\beta_k$ be positive integers summing to~$m$, representing the side lengths of the cubes.  For $i \in \{0,\ldots,k\}$, let $\alpha_i = \beta_1 + \cdots + \beta_i$.  For $j \in \{0,\ldots,m\}$, let~$\gamma_j$ be the unique integer $i$ such that $\alpha_{i-1} \leq j < \alpha_{i}$.  Then set
	\[ b_j = \left(\beta_{\gamma_j}\right)^{d-1}. \]
\old{	
$b_j$ is equal to $\beta_1^{d-1}$ for $j \in \{0,1,\ldots, \beta_1-1\}$, then equal to a constant $\beta_2^{d-1}$ for $j \in \{\beta_1, \ldots \beta_1 + \beta_2 -1 \}$, and so forth.  Some notation:  for $j \geq 0$ let $\alpha_j = \sum_{i < j} \beta_j$.  The $\alpha_j$ correspond to the starting points of the cube intervals, except that if $k$ is the total number of cube intervals then $\alpha_k = m$.
}
Note that $\sum_{j=0}^m b_j = \sum_{i=1}^k \beta_i^d$.
%Write $\gamma_j = \min \{i : \alpha_i < j \}$.

\item There is a small constant $c'$ such that for each $i=1,\ldots,k-1$ (but possibly not for $i=k$)
	\[ c' (\beta_i/2)^d \leq \sum_{j=\alpha_{i-1}}^{\alpha_{i}-1} a_j \leq c' (\beta_i)^d. \]
\old{
In each interval corresponding to a cube (except possibly the last such interval), the sum of $a_j$ over the interval is at most a fixed small constant $c'$ times the sum of $b_j$ over that interval (i.e., $c'$ times the volume of the cube) and at least $c''=c'/2^d$ times the sum of $b_j$ over that interval.
}
\end{enumerate}

Once we have fixed $c'$, we can construct the sequence $b_j$ from $a_j$ in a deterministic way by induction:  Given the cube side lengths $\beta_1,\ldots,\beta_{i-1}$, take $\beta_{i}$ to be the smallest possible such that the second condition holds.

Next we make explicit our choice of $c'$.  For $x \in \Z^d$, write $\PP_x$ for the probability measure associated to simple random walk $(X(t))_{t=0}^{\infty}$ in $\Z^d$ with $X(0)=x$.  By the local central limit theorem~\cite[\textsection 2]{LL10}, if~$t$ is even, then for each site $y$ of even parity in the ball $\B(x,\sqrt{t})$, we have
	\[ \PP_x (X(t) = y) \geq c_0 t^{-d/2} \]
for a constant $c_0>0$ depending only on~$d$.  Let $c_1 = \omega_d 2^{-d-5} c_0$, where $\omega_d$ is the volume of the unit ball in $\R^d$.

By \cite[Prop.\ 2.1.2]{LL10}, there is a constant $c_2>0$ depending only on $d$, so that
	\begin{equation} \label{e.youcannotescape}
	%\PP \left\{ \max_{0 \leq s \leq (c_2 h)^2} |X(s)| \geq h \right\} < c_1.
	\PP_0 \left \{ \max_{0 \leq s \leq t} |X(s)| \geq \sqrt{t}/c_2 \right \} < c_1.
	\end{equation}
Now take $c' = \omega_d 2^{-d-4} c_2^d$ in the above construction of the sequence $b_j$ from $a_j$.  This defines a sequence of shells $S_{\alpha_i}$ as in Figure~\ref{shells}.  According to the next lemma, if $A'$ is any subset of $\Z^d$ satisfying $A' \cap S_j \leq a_j$ for $j=0,\ldots,m$, then simple random walk started from any point in the $i$-th shell $S_{\alpha_i}$ has at least a constant $c_1>0$ probability of exiting $A'$ before reaching $S_{\alpha_{i-1}} \cup S_{\alpha_{i+1}}$.

\begin{lemma} \label{lem:trialsuccessrate}
%There exists a constant $c_1>0$ depending only on $d$, such that the following holds.
Let $A'$ be any set for which $\# (A' \cap S_j) \leq a_j$ for each $j=0,\ldots,m$.  Fix $1 < i < k$ and $x \in S_{\alpha_i}$.  For simple random walk in $\Z^d$ started at $x$, let $T$ be the first hitting time $S_{\alpha_{i-1}} \cup S_{\alpha_{i+1}}$, and let $T'$ be the first hitting time of $(A')^c$.  Then
	\[ \PP_x \left\{T > T' \right\} \geq c_1. \]
\end{lemma}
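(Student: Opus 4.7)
The plan is a local central limit theorem argument. Set $r := \min(\beta_i, \beta_{i+1})$; the $L^\infty$-box $\tilde\B(x, r-1)$ lies entirely in the ``safe region'' $\bigcup_{j=\alpha_{i-1}+1}^{\alpha_{i+1}-1} S_j$, so any walk confined to this box has $T$ exceeding its confinement time. I choose $t \asymp r^2$ (with parity adjusted) so that \eqref{e.youcannotescape} bounds by $c_1$ the probability that the walk leaves $\tilde\B(x, r-1)$ during $[0,t]$, while the local CLT for simple random walk (\cite[\textsection 2]{LL10}) simultaneously gives $\PP_x(X(t) = y) \geq c_0 t^{-d/2}$ for every correct-parity lattice site $y \in \tilde\B(x, \sqrt t)$.

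The number of correct-parity lattice sites in $\tilde\B(x, \sqrt t)$ is of order $t^{d/2} \asymp r^d$. Meanwhile, since the ball lies in the safe region, the per-region bounds $\sum_{j=\alpha_{i-1}}^{\alpha_i - 1} a_j \leq c'\beta_i^d$ and $\sum_{j=\alpha_i}^{\alpha_{i+1}-1} a_j \leq c'\beta_{i+1}^d$ imply
$$\#(A' \cap \tilde\B(x, \sqrt t)) \leq c'(\beta_i^d + \beta_{i+1}^d).$$
The constant $c' = \omega_d 2^{-d-4} c_2^d$ has been engineered so that this is at most half the accessible-site count. Summing the local CLT lower bounds over non-$A'$ correct-parity sites in $\tilde\B(x, \sqrt t)$ then yields $\PP_x\{X(t) \in \tilde\B(x, \sqrt t) \setminus A'\} \geq 2c_1$; subtracting the escape probability gives $\PP_x\{T > T'\} \geq c_1$, since a time-$t$ sample path that both stays inside $\tilde\B(x, r-1)$ and ends in $(A')^c$ must have first hit $(A')^c$.

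The main technical point is verifying that $c'(\beta_i^d + \beta_{i+1}^d)$ really is less than half the accessible-site count $\asymp r^d = \min(\beta_i, \beta_{i+1})^d$. When $\beta_i$ and $\beta_{i+1}$ are comparable this is immediate from the definition of $c'$. In the asymmetric regime where one is much larger, the bound $c'\max(\beta_i,\beta_{i+1})^d$ can overwhelm $r^d$, and one must instead invoke the trivial per-shell cap $\#(A' \cap \tilde\B(x, \sqrt t) \cap S_j) \leq |\tilde\B(x, \sqrt t) \cap S_j| \leq (2\sqrt t + 1)^{d-1}$ to keep the $A'$ count from exceeding the ball volume. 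Reconciling these bounds --- and the explicit choice of $c_2$ (and hence $c'$) --- is where the argument requires the most care.
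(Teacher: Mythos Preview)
Your overall strategy matches the paper's: pick $t\asymp r^2$ with $r=\min(\beta_i,\beta_{i+1})$, use \eqref{e.youcannotescape} to bound the escape probability by $c_1$, and use the local CLT to show $X(t)$ lands outside $A'$ with probability at least $2c_1$. The difference is in how you handle the asymmetric case, and here your proposed fix does not work.

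You try to bound $\#(A'\cap\tilde\B(x,\sqrt t))$ by $c'(\beta_i^d+\beta_{i+1}^d)$ and then, when $\max(\beta_i,\beta_{i+1})\gg r$, fall back on the ``trivial per-shell cap'' $\#(\tilde\B(x,\sqrt t)\cap S_j)\le (2\sqrt t+1)^{d-1}$. But summing that cap over the $O(\sqrt t)$ shells the ball meets on the large side gives exactly the volume of that half of the ball, so you have not excluded the possibility that $A'$ fills the large half entirely. Nor does the bound $\sum_{j=\alpha_{i-1}}^{\alpha_i-1}a_j\le c'\beta_i^d$ help, since the ball only sees $O(c_2 r)$ of those shells and there is no separate control on that partial sum. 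So neither bound keeps $\#(A'\cap\tilde\B(x,\sqrt t))$ strictly below a fixed fraction of the ball volume.

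The paper avoids this entirely by restricting attention to the \emph{half} of the ball on the side of the smaller $\beta$. Concretely, if $\beta_{i+1}\le\beta_i$, set $B=\B(x,c_2 h)\cap\bigcup_{j=\alpha_i}^{\alpha_{i+1}-1}S_j$ (even-parity points). Since $x\in S_{\alpha_i}$, this half still has $\#B\ge(\omega_d 2^{-d-3})(c_2 h)^d$ points, while now $\#(A'\cap B)\le\sum_{j=\alpha_i}^{\alpha_{i+1}-1}a_j\le c'\beta_{i+1}^d=c'h^d<\#B/2$ by the definition of $c'$. The large side is simply ignored. This one-line modification is all that is needed to close the gap in your argument.
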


\begin{proof}
Let $h = \min(\beta_i, \beta_{i+1})$, and let~$t$ be the greatest even integer $\leq (c_2 h)^2$.
Let $r=c_2 h$.  By the local central limit theorem, for each site $y$ of even parity in the ball $\B(x,r)$, we have
	\[ \PP_x (X(t) = y) \geq c_0 r^{-d}. \]

Suppose first that $\beta_{i+1} \leq \beta_{i}$.  Let $B$ be the set of points of even parity in the intersection
	\[  \B(x,r) \cap \bigcup_{j=\alpha_i}^{\alpha_{i+1}-1} S_j. \]
Note that $\# B \geq \# \B(x,r) / 2^{d+2} \geq (\omega_d 2^{-d-3}) r^d$.
Hence by our choice of $c'$,
	\[ \# (A' \cap B) \leq  \sum_{j=\alpha_{i}}^{\alpha_{i+1}-1} a_j
	\leq c' \beta_{i+1}^d  = c' h^d < \#B / 2. \]
Hence
	\[ \PP_x \{ X(t) \notin A' \} \geq \sum_{y \in B \cap (A')^c} \PP_x(X(t)=y) \geq (\#B / 2) \frac{c_0}{r^d} > 2c_1. \]
By \eqref{e.youcannotescape}, we have
	\[ \PP_x (T \leq t) < c_1. \]
We conclude that
	\begin{align*} \PP \{ T > T' \}
	 %\geq \PP \{T > t \geq T' \}
	 &\geq \PP_x \{T > t, ~X(t) \notin A' \} \\
	 &\geq 1 - \PP_x \{T \leq t \} - \PP\{ X(t) \in A' \} \\
	 &> 1 - c_1 - (1-2c_1).
	\end{align*}
	
The proof in the case $\beta_{i+1} > \beta_i$ is identical, using the shells $\bigcup_{j=\alpha_{i-1}+1}^{\alpha_i} S_j$ in the definition of the set $B$.
\end{proof}

During the formation of the IDLA cluster $A$, each time a particle hits a shell $S_{\alpha_i}$ ($1<i<k$) for the first time, we begin a ``trial'' which ends either when the particle stops walking (due to exiting the cluster and occupying a previously unoccupied site) or when it next hits $S_{\alpha_{i-1}} \cup S_{\alpha_{i+1}}$.  In the former case, we call the trial a ``failure.''  Otherwise, we call it a ``success.''

Let $F_s$ be the event that the $s$-th trial fails, and let $\mathcal{G}_{s}$ be the $\sigma$-field generated by all random walks generating the IDLA cluster up until the start of the $s$-th trial.  Let $A_s$ be the occupied cluster at the time when the $s$-th trial starts, and let $E_s= \bigcap_{j=0}^m \{ \# (A_s \cap S_j) \leq a_j \}$.
By Lemma~\ref{lem:trialsuccessrate}, we have
	\[ \PP(F_s | \mathcal{G}_{s} ) \one_{E_s} \geq c_1 \one_{E_s}. \]
In other words, on the event $E_s$, once the $s$-th trial begins (and we condition on the past of the process --- in particular on all previous trials) the trial fails with probability at least probability $c_1$.
\old{
will call the subsequent behavior of the particle (until the particle either stops moving or hits a distinct shell $S_{\alpha_{i'}}$) a ``trial.''  If the particle stops in $\tilde \B(z,m)$ before it reaches either $S_{\alpha_{i-1}}$ or $S_{\alpha_{i+1}}$ (and is deposited), we call the trial a failure.  Otherwise we call it a success.
 On the event that $A$ corresponds to the prescribed sequence $a_j$, there are at most $\sum_{j=0}^m a_j$ failures (since each failure adds a new site to~$A$).
 If $\# A$ is a lot less than $c_1$ times the number of trials, then we would expect that having so few failures would be unlikely.
 }

Since each failure adds a new site to~$A$, there are at most $\# A$ failures.
Next we estimate the total number of trials.  Each particle that makes it to $S_{\alpha_i}$ has to first pass through all the $S_{\alpha_{i'}}$ for $i' < i$.  Thus the process of successful growing a set $A$, such that $\#(A \cap S_j) = a_j$ for all $j$, necessarily includes at least $T = \sum a_j \gamma_j$ trials.  Hence on the event $\mathcal{T}_\aa$, there are at least $T$ trials, at most $\sum a_j$ of which fail; and conditional on all past trials, each trial fails with probability at least $c_1$.

Before we put an explicit bound on $T$, we'd like to find a way to reduce the problem to the case that
\begin{equation} \label{e.ATnice} \# A = \sum a_j < c_1 T/2 \end{equation} (i.e., failure is required to occur at less than half the expected rate).  In other words, we'd like to reduce the case that $\sum a_j \gamma_j$ is larger (by a fixed constant factor $2/c_1$) than $\sum a_j$.  This will certainly be the case, for example, if more than a fraction $1/4$ of the sum $\sum a_j$ comes from values of $j$ for which $\gamma_j > 8/c_1$.

Let $R$ be a fixed integer greater than $8/c_1$.  Let $I$ be the smallest positive integer such that
	\[ \sum_{j = \alpha_{IR}}^m a_j < 4 \sum_{j = \alpha_{IR}+1}^m a_j. \]
Then consider the smaller box we get by replacing~$m$ by $m - \alpha_{IR}$.  We can take~$b$ small enough so that~$\beta_i^d$ is at most an arbitrarily small constant times~$m^d$, and hence~$\beta_i$ is at most some arbitrarily small constant times $m$.  Indeed, since~$R$ is just a constant, we can take~$b$ small enough so that $\sum_{i=1}^R \beta_i$ is at most an arbitrarily small constant times~$m$.  Since the sequence
	\[ \sum_{j = \alpha_{iR}}^{\alpha_{(i+1)R} - 1} a_j \]
decreases by a factor of at least~$4$ each time~$i$ increases by~$1$ (up until~$i = I$), we also obtain that the sequence
	\[ \beta_{iR} + \cdots + \beta_{(i+1)R -1} \]
%$$\sum_{j = iR}^{(i+1)R - 1} \beta_j,$$
is also bounded above by a sequence that decreases exponentially in~$i$, and hence (by making~$b$ small enough) we can arrange so that $\sum_{i=1}^{IR} \beta_i$ is less than $m/2$.  Then if we replace $m$ with $m - \alpha_{IR}$ (and replace $A$ by its intersection with the corresponding smaller box), we reduce the problem to one for which \eqref{e.ATnice} holds and $m$ has decreased by at most a factor of $2$.

Now assuming \eqref{e.ATnice}, the usual large deviations bound (Cramer's theorem) implies that the probability of seeing $T$ trials with only $c_1 T/2$ or fewer failures is exponentially unlikely in $T$.  Note that~$T = \sum_{j=0}^m a_j \gamma_j$ is up to a multiplicative constant (between $c'/2^d$ and $c'$) the same as $\sum_{j=0}^m b_j \gamma_j =  \sum_{i=1}^k i \beta_i^d$.   Hence the proof of Lemma \ref{lem:strongnotentacle} is completed by the following lower bound on $\sum i \beta_i^d$.

\begin{lemma} \label{lem:trialnumbound}
Fix $d \geq 2$ and $m \geq 1$, and consider an integer $k$ and sequence of real numbers $\beta_1, \beta_2, \ldots, \beta_k$ for which the following hold:
\begin{enumerate}
\item $\beta_i \geq 1$ for all $i \in \{1,\ldots, k \}$.
\item $\sum_{i=1}^k \beta_k = m+1$.
\item $E(\beta) := \sum_{i=1}^k i \beta_i^d$ is minimal among sequences satisfying the two conditions above.
\end{enumerate}
There is a constant $c(d) > 0$, depending only on $d$, such that \begin{equation} \label{e.betacases} \begin{cases} E(\beta) \geq c(d) m^2/\log m, & d=2; \\ E(\beta) \geq c(d) m^2, & d \geq 3. \end{cases} \end{equation}
\end{lemma}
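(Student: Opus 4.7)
The plan is to bound $E(\beta)$ from below by a single application of Hölder's inequality; the minimality hypothesis is not essential, since a universal lower bound on $E$ over all sequences satisfying conditions~1 and~2 will suffice. First I would observe that condition~1 ($\beta_i \geq 1$) combined with $\sum_{i=1}^k \beta_i = m+1$ forces $k \leq m+1$. This is the only place condition~1 enters the argument, and it is precisely what allows us to truncate the sums that appear below.

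The key step is to split $\beta_i = i^{-1/d}\cdot(i^{1/d}\beta_i)$ and apply Hölder with exponents $p = d/(d-1)$ and $q = d$:
\[
m+1 \;=\; \sum_{i=1}^k \beta_i \;\le\; \Bigl(\sum_{i=1}^k i^{-1/(d-1)}\Bigr)^{(d-1)/d} \Bigl(\sum_{i=1}^k i\,\beta_i^{\,d}\Bigr)^{1/d}.
\]
Raising to the $d$-th power yields the master inequality
\[
(m+1)^d \;\le\; \Bigl(\sum_{i=1}^k i^{-1/(d-1)}\Bigr)^{d-1} E(\beta).
\]

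It remains to bound the harmonic-type sum using $k \leq m+1$. For $d=2$, one has $\sum_{i=1}^k i^{-1} \leq 1 + \log k \leq C\log m$, giving $E(\beta) \geq c\,m^2/\log m$. For $d \geq 3$, since $1/(d-1) \leq 1/2 < 1$, comparing with an integral gives $\sum_{i=1}^k i^{-1/(d-1)} \leq C_d\, k^{(d-2)/(d-1)} \leq C_d\, m^{(d-2)/(d-1)}$; plugging into the master inequality yields $E(\beta) \geq (m+1)^d/(C_d^{d-1} m^{d-2}) \geq c_d\,m^2$.

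I do not expect a serious obstacle: Hölder is essentially saturated here by the truncated Lagrangian profile $\beta_i \propto i^{-1/(d-1)}$ (with $k$ chosen so the profile stays above~$1$ and has the correct total mass), which is the same extremal shape that falls out of a heuristic minimization of $\sum i\beta_i^d$ under $\sum \beta_i$ fixed. Consequently both bounds in \eqref{e.betacases} are of the correct order, and the proof reduces to the routine estimates above.
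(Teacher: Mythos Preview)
Your proof is correct and is essentially identical to the paper's own argument: the paper also observes $k\le m+1$ from conditions~1 and~2, applies H\"older with exponents $d$ and $d'=d/(d-1)$ to obtain $m+1\le\bigl(\sum i\beta_i^d\bigr)^{1/d}\bigl(\sum i^{-d'/d}\bigr)^{1/d'}$, and then bounds $\sum_{i\le k} i^{-1/(d-1)}$ by $\log k$ when $d=2$ and by a constant times $k^{(d-2)/(d-1)}$ when $d\ge3$. Your remark that the minimality hypothesis is never actually used is apt; the paper's proof likewise gives a universal lower bound over all admissible sequences.
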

\begin{proof}
The first two conditions imply that $k \leq m+1$.  By Holder's inequality,
	\[ m+1 = \sum_{i=1}^k \beta_i \leq \left( \sum_{i=1}^k i \beta_i^d \right)^{1/d} \left( \sum_{i=1}^k i^{-d'/d} \right)^{1/d'} \]
where $d'$ is such that $1/d + 1/d' = 1$.  Hence
	\[ E(\beta) \geq (m+1)^d \left( \sum_{i=1}^k i^{-d'/d} \right)^{-d/d'}. \]
If $d>2$, then $d'/d < 1$ and the right side is at least a constant times $m^d k^{(1-\frac{d'}{d})(-\frac{d}{d'})} = m^d k^{2-d} \geq m^2$.  In the case $d=2$, we have $d'/d=1$ and the right side is at least a constant times $m^2 (\log k)^{-1} \geq m^2 (\log m)^{-1}$.
\end{proof}

\subsection{Bound on the probability of very late points}
%\subsection{Proof of Lemma \ref{lem:aprioriestimates}}
\label{sec:aprioriproof}

In this section we prove~Lemma \ref{lem:aprioriestimates}.  Lawler, Bramson and Griffeath give an exponential bound on the probability of $\ell$-late points in the case that $\ell$ is comparable to the radius of the cluster.  Specifically, they show (see Lemma~6 of~\cite{LBG} and the displayed equation immediately following it) that for any $\epsilon>0$ there is a constant~$c_2$ depending on $\epsilon$ such that for all $z \in \B_{(1-\epsilon)n}$
	\begin{equation} \label{LBGbound1} \PP( z\notin A((1+\epsilon)\pi n^2 )) < 2 e^{-c_2 n}. \end{equation}
Fix $r>0$.  Taking $\epsilon' = \epsilon/2$ and $n=(1-\epsilon')r$ in \eqref{LBGbound1}, we have $(1-\epsilon) r < (1-\epsilon')n$ and $\pi r^2 > (1+\epsilon') \pi n^2$, so for all $z \in \B_{(1-\epsilon )r}$,
	\begin{equation} \label{LBGbound2} \PP( z\notin A(\pi r^2 )) < 2 e^{-c_2 n} \leq 2 e^{-cr} \end{equation}	
where $c$ is a constant depending on $\epsilon$.
\old{
Summing \eqref{LBGbound2} yields
	\begin{align} \PP( \B_{(1-\epsilon)r} \not\subset A(\pi r^2))
%	&\leq \sum_{z \in \B_{(1-\epsilon)r}} \PP(z \notin A(\pi r^2)) \nonumber \\
%	&\leq \sum_{z \in \B_{(1-\epsilon')n}} \PP(z \notin A((1+\epsilon')\pi n^2))\nonumber \\
	&\leq \sum_{z \in \B_{(1-\epsilon)r}} \PP(z \notin A(\pi r^2))\nonumber \\
	&\leq 2 \pi r^2 e^{-cr} \nonumber \\
	&\leq C'_0 e^{-c'r} \label{LBGbound3}
	\end{align}
for constants $c', C'_0 >0$ depending on $\epsilon$.
}

By definition,
	\[  \Late_\ell[100\pi \ell^2] = \bigcup_{z\in \B_{9\ell}} \left\{ z \notin A(\pi(|z|+\ell)^2) \right\}. \]
%Taking $n=(1-\epsilon)(|z|+\ell)$ in \eqref{LBGbound1} with $\epsilon = \frac{1}{20}$,
Taking $r = |z|+\ell$ in \eqref{LBGbound2} with $\epsilon=\frac{1}{10}$, we have for all $z \in \B_{9\ell}$
% 	\[ \PP \{ z \notin A(\pi(|z|+\ell)^2) \} \leq \PP \{ z \notin A((1+\epsilon)\pi n^2) \} < 2e^{-cn}. \]
 	\[ \PP \{ z \notin A(\pi r^2) \} < 2e^{-cr} \leq 2e^{-c\ell}. \]

Summing over $z \in \B_{9\ell}$ we obtain
	\begin{align*} \PP (\Late_\ell[100\pi \ell^2])
	%\leq \sum_{z\in \B_{9\ell}} \PP \left\{ z \notin A(\pi(|z|+\ell)^2 \right\}. \]
%	\leq \sum_{z \in \B_{9\ell}} 2e^{-c(|z|+\ell)/2}
	\leq 200 \pi \ell^2 e^{-c\ell}.
	\end{align*}
The right side is at most $C_0 e^{-c_0 \ell}$ for suitable constants $C_0,c_0>0$, which proves \eqref{nolate}.

\old{
To prove \eqref{noearly}, let $T=100\pi m^2$ and fix $z \in \B_{T}$.  Let $n=\pi(|z|-m)^2$.  We use Lemma~\ref{lem:thickening}, which says that there are absolute constants $b, C'_0, c'_1>0$ such that
%if $z \in A(n)$, then with high probability a constant fraction of the ball $\B(z,|z|)$ is occupied:
	\begin{align} \label{eq:thickening} \PP \left\{ z \in A(n),\, \# \left(A(n) \cap B\left(z,m\right)\right) \leq bm^2 \right\} \leq C'_0 e^{-c'_1 m}. \end{align}
In particular, since $\#A(n) = n$, if $n<bm^2$ then
	$\PP \{ z \in A(n) \} \leq C'_0 e^{-c'_1 m}.$

Next suppose that $bm^2 \leq n \leq T$.  Let $r=|z|-m \geq (\sqrt{b/\pi}) m$.  Taking $\epsilon = b/1000$ in \eqref{LBGbound3}, we have
	\[ \PP \{ \B_{(1-\epsilon)r} \not \subset A(n) \} \leq c'_0 e^{-c' r} \leq c'_0 e^{-c'' m}. \]
where $c'' = (\sqrt{b/\pi})c'$.
Note that
	\[ \# \B_{(1-\epsilon)r} \geq \pi r^2 (1- 3 \epsilon) > n - bm^2 \]
(here we have used that $\pi r^2 = n$ and $3\epsilon n \leq 3 \epsilon T = 3 (b/1000) \cdot 100\pi m^2 < bm^2$).  Since $\#A(n) = n$, and $\B(z,m)$ is disjoint from $\B_{(1-\epsilon)r}$, it follows that
	\[ \PP \left\{ \# (A(n) \cap \B(z,m)) > bm^2 \right\} \leq \PP \left\{ \B_{(1-\epsilon)r} \not \subset A(n) \right\}. \]
Hence if $n = \pi (|z|-m)^2 \leq T$, then
	\[ \PP \left\{z \in A(n)\right\} \leq C'_0 e^{-c'_1 m} + c'_0 e^{-c'' m}. \]

Since $A(T)$ is connected as a subset of the graph $\Z^2$, we have $A(T) \subset \B_{T}$.
We obtain from the definition of $\Early_m[T]$,
	\begin{align*} \PP(\Early_m[T])
	&\leq \EE \sum_{z \in A(T)} \one_{\left\{ z\in A(\pi(|z|-m)^2) \right\}} \\
	&= \EE \sum_{z \in \B_T} \one_{\left\{ z\in A(T) \right\}} \one_{\left\{z\in A(\pi(|z|-m)^2) \right\}} \\
	&= \sum_{z \in \B_T} \PP \left\{ z\in A(T) \cap A(\pi(|z|-m)^2) \right\}.
	\end{align*}
Recall that $T=100\pi m^2$, so $\pi(|z|-m)^2 < T$ if and only if $|z| < 11m$.  The sum can then be split into two parts
	\[ \PP(\Early_m[T]) \leq \sum_{z \in \B_{11m}} \PP \left\{z \in A(\pi(|z|-m)^2) \right\} + \sum_{z \in \B_T - \B_{11m}} \PP\left\{ z \in A(T) \right\}. 	
	\]
Each term in the first sum is bounded above by $C'_0 e^{-c'_1 m} + c'_0 e^{-c'' m}$.

To bound the second sum, note that if $|z| > 11m$, then $\B(z,m)$ is disjoint from $\B_{10m}$.  Taking $\epsilon = b/1000$ as above, we have $\# \B_{(1-\epsilon)10m} > T - bm^2$, so that
	\[  \begin{split} \PP\left\{ z \in A(T) \right\} \leq \PP \left\{ z \in A(T),\, \# \left(A(T) \cap B\left(z,m\right)\right) \leq bm^2 \right\} \qquad \\ + \PP \left \{\B_{(1-\epsilon)10m} \not\subset A(T) \right\}. \end{split} \]
The right side is bounded above by $C'_0 e^{-c'_1 m} +  c'_0 e^{-c'' m}$.  We conclude that
	\[ \PP(\Early_m[T]) \leq 2\pi (100 \pi m^2)^2 (C'_0 e^{-c'_1 m} +  c'_0 e^{-c'' m}). \]
The right side is at most $C_0 e^{-c_0 m}$ for suitable constants $C_0, c_0>0$, which completes the proof.
}


\begin{thebibliography}{99}

\bibitem[AG10a]{AG10a} A. Asselah and A. Gaudilli\`{e}re, A note on the fluctuations for internal diffusion limited aggregation. \arxiv{1004.4665}

\bibitem[AG10b]{AG10b} A. Asselah and A. Gaudilli\`{e}re, From logarithmic to subdiffusive polynomial fluctuations for internal DLA and related growth models. \arxiv{1009.2838}

\bibitem[BZ10]{BZ10} M. Bramson and O. Zeitouni, Tightness of the recentered maximum of the two-dimensional discrete Gaussian free field. \arxiv{1009.3443}

\bibitem[DF91]{DF} P.\ Diaconis and W.\ Fulton, A growth model, a game, an algebra, Lagrange inversion, and characteristic classes, {\it Rend.\ Sem.\ Mat.\ Univ.\ Pol.\ Torino} {\bf 49}(1): 95--119, 1991.

\bibitem[FL10]{FL} T. Friedrich and L. Levine, Fast simulation of large-scale growth models. \arxiv{1006.1003}

\bibitem[FU96]{FU} Y. Fukai and K. Uchiyama,
Potential kernel for two-dimensional random walk.
\emph{Ann.\ Probab.} \textbf{24}(4):1979--1992, 1996.

\bibitem[GQ00]{GQ} J. Gravner and J. Quastel, Internal DLA and the Stefan problem, {\it Ann.\ Probab.\ }{\bf 28}(4):1528--1562, 2000.

\bibitem[JLS10]{JLS10} D. Jerison, L. Levine and S. Sheffield, Internal DLA in higher dimensions.
\arxiv{1012.3453}

\bibitem[JLS11]{JLS11} D. Jerison, L. Levine and S. Sheffield, Internal DLA and the Gaussian free field. \arxiv{1101.0596}

\bibitem[KL10]{KL} W. Kager and L. Levine, Diamond aggregation, \emph{Math.\ Proc.\ Cambridge Phil.\ Soc.} \textbf{149}: 351--372, 2010. \arxiv{0905.1361}

\bibitem[Kes90]{Kesten} H. Kesten, Upper bounds for the growth rate of DLA, \emph{Physica A} \textbf{168}, 529--535, 1990.

\bibitem[KS04]{KS04}
G. Kozma and E. Schreiber,
An asymptotic expansion for the discrete harmonic potential,
\emph{Electron. J. Probab.} \textbf{9}(1):1--17, 2004.
\arxiv{math/0212156}

\bibitem[LBG92]{LBG} G. F. Lawler, M. Bramson and D. Griffeath, Internal diffusion limited aggregation, {\it Ann.\ Probab.\ }{\bf 20}(4):, 2117--2140, 1992.

\bibitem[Law95]{Lawler95} G. F. Lawler, Subdiffusive fluctuations for internal diffusion limited aggregation, {\it Ann.\ Probab.} {\bf 23}(1):71--86, 1995.

\bibitem[LL10]{LL10} G. F. Lawler and V. Limic, {\it Random Walk: A Modern Introduction}, Cambridge Studies in Advanced Mathematics \textbf{123}, 2010.

\bibitem[LP09]{LP09} L. Levine and Y. Peres, Strong spherical asymptotics for rotor-router aggregation and the divisible sandpile, \emph{Potential Anal.} \textbf{30}:1--27, 2009. \arxiv{0704.0688}

\bibitem[LP10]{LP10} L. Levine and Y. Peres, Scaling limits for internal aggregation models with multiple sources, \emph{J. d'Analyse Math.} \textbf{111}: 151--219, 2010.  \arxiv{0712.3378}

\bibitem[MD86]{MD} P. Meakin and J. M. Deutch, The formation of surfaces by diffusion-limited annihilation,
\emph{J. Chem.\ Phys.} \textbf{85}:2320, 1986.


\bibitem[MM00]{MM} C. Moore and J. Machta,
Internal diffusion-limited aggregation: parallel algorithms and complexity,
J. Stat.\ Phys.\
\textbf{99}(3-4): 661--690, 2000.
\arxiv{cond-mat/9909233}

\bibitem[MP10]{MP} P. M\"{o}rters and Y. Peres, \emph{Brownian Motion}, Cambridge University Press, 2010.

\bibitem[RY05]{RY} D. Revuz and M. Yor, \emph{Continuous Martingales and Brownian Motion}, Springer, 2005.

\bibitem[Sh07]{Sh} S. Sheffield., Gaussian free fields for mathematicians. {\em Probab. Theory Related Fields}, 139(3-4):521--541, 2007. \arxiv{math/0312099}

\bibitem[Wil96]{Wilson} D. B. Wilson, Generating random spanning trees more quickly than the cover time, In \emph{28th {A}nnual {ACM} {S}ymposium on the {T}heory of {C}omputing (STOC '96)}, 296--303, 1996.

\bibitem[WS81]{WS} T. A. Witten and L. M. Sander, Diffusion-limited aggregation, a kinetic critical phenomenon, \emph{Phys.\ Rev.\ Lett.} \textbf{47}(19):1400--1403, 1981.

\end{thebibliography}
\end{document}